\newcommand{\bbbracketleft}{[\hspace{-2.2pt}[}
\newcommand{\bbbracketright}{]\hspace{-2.2pt}]}
\newcommand{\curlybraceleft}{\{\hspace{-3.2pt}\{}
\newcommand{\curlybraceright}{\}\hspace{-3.2pt}\}}
\newcommand{\jm}[1]{{\bbbracketleft{#1}\bbbracketright}}
\newcommand{\av}[1]{{\curlybraceleft{#1}\curlybraceright}}
\newcommand{\tpn}[1]{ {\vvvert{#1}\vvvert} }
\newcommand{\sF}[1]{\mathsf{#1}}
\newcommand*{\dotp}{\raisebox{-0.2ex}
{\scalebox{1.3}{$\cdot$}}}
\renewcommand*{\ddot}[1]{%
  \accentset{\mbox{\footnotesize\bfseries .\hspace{-0.05ex}.}}{#1}}
 \renewcommand*{\dot}[1]{%
  \accentset{\mbox{\footnotesize\bfseries .}}{#1}}
\newcommand{\U}{{\bf U}}
\newcommand{\W}{{\bf W}}
\newcommand{\E}{{\cc E}}
\newcommand{\Del}{\triangle}
\newcommand{\dG}{{\mathrm{dG}}}
\newcommand{\dGe}{{\mathrm{dG},e}}
\newcommand{\dGa}{{\mathrm{dG},a}}
\newcommand{\wl}{\overline}
\newcommand{\la}{\left\langle}
\newcommand{\ra}{\right\rangle}
\newcommand{\bb}{\mathbb}
\let\temp\phi
\let\phi\varphi
\let\varphi\temp
\newcommand{\pphi}{\varphi}
\newcommand{\nf}{\nicefrac}
\newcommand{\cc}{\mathcal}
\newcommand{\de}{\partial}
\newcommand{\epsi}{\epsilon}
\newcommand{\eps}{\varepsilon}
\newcommand{\wh}{\widehat}
\newcommand{\z}{\zeta}
\renewcommand{\k}{\kappa}
\newcommand{\Oe}{{\Omega_e}}
\newcommand{\Oa}{{\Omega_a}}
\newcommand{\grad}  {\bf\nabla}
\newcommand{\gradh}{{\grad_{\! h}}}
\newcommand{\divb} {\mathbf{div}\,}		% Divergenza di un tensore (grassetto)
\renewcommand{\div} {\mbox{\rm{div}}\,}		% Divergenza di un vettore (minuscolo)
\renewcommand{\bf}{\mathbf}
\newcommand{\mr}{\mathrm}
\renewcommand{\d}{\mr{d}}
\DeclareMathOperator{\card}{card}
\newtheorem{theorem}{Theorem}
\numberwithin{theorem}{section}
\newtheorem{lemma}{Lemma}[section]
\newtheorem{corollary}[theorem]{Corollary}
\theoremstyle{remark}
\newtheorem{remark}[theorem]{Remark}
\theoremstyle{definition}
\newtheorem{assumption}{Assumption}
\newenvironment{subtheorem}[1]{%
  \def\subtheoremcounter{#1}%
  \refstepcounter{#1}%
  \protected@edef\theparentnumber{\csname the#1\endcsname}%
  \setcounter{parentnumber}{\value{#1}}%
  \setcounter{#1}{0}%
  \expandafter\def\csname the#1\endcsname{\theparentnumber\alph{#1}}%
  \ignorespaces
}{%
  \setcounter{\subtheoremcounter}{\value{parentnumber}}%
  \ignorespacesafterend
}
\newcounter{parentnumber}
\def\ps@pprintTitle{%
 \let\@oddhead\@empty
 \let\@evenhead\@empty
 \def\@oddfoot{}%
 \let\@evenfoot\@oddfoot}
\begin{document}

\begin{frontmatter}

%% Title, authors and addresses

%% use the tnoteref command within \title for footnotes;
%% use the tnotetext command for theassociated footnote;
%% use the fnref command within \author or \address for footnotes;
%% use the fntext command for theassociated footnote;
%% use the corref command within \author for corresponding author footnotes;
%% use the cortext command for theassociated footnote;
%% use the ead command for the email address,
%% and the form \ead[url] for the home page:
%% \title{Title\tnoteref{label1}}
%% \tnotetext[label1]{}
%% \author{Name\corref{cor1}\fnref{label2}}
%% \ead{email address}
%% \ead[url]{home page}
%% \fntext[label2]{}
%% \cortext[cor1]{}
%% \address{Address\fnref{label3}}
%% \fntext[label3]{}

\title{A high-order discontinuous Galerkin approach to the elasto-acoustic problem\tnoteref{label1}}

\tnotetext[label1]{This work has been supported by SIR Research Grant no.~RBSI14VTOS funded by MIUR -- Italian Ministry of Education, Universities, and Research.}

%% use optional labels to link authors explicitly to addresses:
%% \author[label1,label2]{}
%% \address[label1]{}
%% \address[label2]{}

\author[1]{Paola F.~Antonietti}
\ead{paola.antonietti@polimi.it}
\author[1]{Francesco Bonaldi\corref{cor1}}
\ead{francesco.bonaldi@polimi.it}
%{Corresponding author, \email{francesco.bonaldi@polimi.it}}}
\author[1]{Ilario Mazzieri}
\ead{ilario.mazzieri@polimi.it}
\address[1]{MOX, Dipartimento di Matematica, Politecnico di Milano\\ Piazza Leonardo da Vinci 32, 20133 Milano, Italy}%
\cortext[cor1]{Corresponding author.} 
\begin{abstract}
%% Text of abstract
\noindent
We address the spatial discretization of an evolution problem arising from the coupling of viscoelastic and acoustic wave propagation phenomena by employing a discontinuous Galerkin scheme on polygonal and polyhedral meshes.
The coupled nature of the problem is ascribed to suitable transmission conditions imposed at the interface between the solid (elastic) and fluid (acoustic) domains. We state and prove a well-posedness result for the strong formulation of the problem, present a stability analysis for the semi-discrete formulation, and finally prove an \emph{a priori} $hp$-version error estimate for the resulting formulation in a suitable (mesh-dependent) energy norm. We also discuss the time integration scheme employed to obtain the fully discrete system. The convergence results are validated by numerical experiments carried out in a two-dimensional setting.
\end{abstract}

\begin{keyword}
%% keywords here, in the form: keyword \sep keyword
discontinuous Galerkin methods \sep elastodynamics \sep acoustics \sep wave propagation \sep polygonal and polyhedral grids
%% PACS codes here, in the form: \PACS code \sep code

%% MSC codes here, in the form: \MSC code \sep code
%% or \MSC[2008] code \sep code (2000 is the default)

\MSC[2010] 65M12 \sep 65M60

\end{keyword}

\end{frontmatter}

%% \linenumbers

\section*{Introduction}
This work is devoted to the development and analysis of a discontinuous Galerkin (dG) method on polygonal and polyhedral grids \cite{unified-brezzi, riviere, daniele-ern, hesthaven} for an evolution problem modeling the coupling of viscoelastic and acoustic wave propagation phenomena. Such kind of problems arise, for example, in geophysics, namely in the modeling and simulation of seismic events near coastal environments. Other contexts in which this problem plays a major role are the modeling of sensing or actuation devices immersed in an acoustic fluid \cite{flemisch-wohlmuth}, as well as medical ultrasonics \cite{monkola-sanna}. In practical applications, the underlying geometry one has to deal with is remarkably complicated and irregular; considering a conforming triangulation would therefore be computationally very expensive. We are thus led to consider a space discretization method capable to reproduce the geometrical constraints under consideration to a reasonable extent of accuracy, without being at the same time too much demanding. Such a discretization is then performed using general polygonal or polyhedral (briefly, \emph{polytopic}) elements, with no restriction on the number of faces each element can possess, and possibly allowing for face degeneration in mesh refinement. The dG method has been recently proven to successfully support polytopic meshes: we refer the reader, e.g., to \cite{antonietti-poligoni, antonietti-review, antonietti-mazzieri-polyhedra, antonietti-cmame, cangiani2014,cangiani2016,cangiani2017,pennesi1,pennesi2}, as well as to the comprehensive research monograph by Cangiani \emph{et al}.~\cite{cangiani-book}. In addition to the dG method, several other methods are capable to support polytopic meshes, such as the Polygonal Finite Element method \cite{sukumar1,sukumar2,sukumar3,sukumar4}, the Mimetic Finite Difference method \cite{antonietti-mfd, brezzi-mfd,gyrya-mfd,beirao-mfd}, the Virtual Element method \cite{vem1,vem2,vem3,vem4}, the Hybridizable Discontinuous Galerkin method \cite{hdg1,hdg2,hdg3,hdg4,hdg5}, and the Hybrid High-Order method \cite{daniele-cmame,daniele-lemaire,daniele-droniou,bonaldi-hho,tittarelli}.

An elasto-acoustic coupling typically occurs in the following framework: a domain made up by two subdomains, one occupied by a solid (elastic) body, the other by a fluid (acoustic) one, with suitable transmission conditions imposed at the interface between the two. The aim of such  conditions is to account for the following physical properties:~\begin{inparaenum}[(i)] \item the normal component of the velocity field is continuous at the interface;
\item a pressure load is exerted by the fluid body on the solid one through the interface.
\end{inparaenum}
In this paper, the unknowns of the problem are the displacement field in the solid domain and the acoustic potential in the fluid domain; the latter, say $\phi$, is defined in terms of the acoustic velocity field $\bf v_a$ in such a way that $\bf v_a = -\grad\phi$. However, other formulations are possible; for instance, one can consider a pressure-based formulation in the acoustic subdomain \cite{monkola-sanna}, or a displacement-based formulation in both subdomains \cite{bermudez}.

In a geophysics context, when a seismic event occurs, both \emph{pressure} (P) and \emph{shear} (S) waves are generated. However, only P-waves (i.e., whose direction of propagation is aligned with the displacement of the medium) are able to travel through both solid and fluid bodies, unlike S-waves (i.e., whose direction of propagation is orthogonal to the displacement of the medium), which can travel only through solids. This explains the reason for considering the first interface condition. On the other hand, the second one accounts for the fact that an acoustic wave propagating in a fluid domain of density $\rho_a$ gives rise to an \emph{acoustic pressure} field of magnitude $\rho_a |\dot\phi | $, $\dot\phi$ denoting the first time derivative of the acoustic potential.

Mathematical and numerical aspects of the elasto-acoustic coupling have been the subject of an extremely broad literature. We give hereinafter a brief overview of some of the research works carried out so far in this field.

Barucq \emph{et al}.~\cite{barucq-frechet} have characterized the Fréchet differentiability of the elasto-acoustic field with respect to Lipschitz-continuous deformation of the shape of an elastic scatterer. The same authors \cite{barucq-dg} have also proposed a dG method for computing the scattered field from an elastic bounded object immersed in an infinite homogeneous fluid medium, employing high-order polynomial-shape functions to address the high-frequency propagation regime, and curved boundary edges to provide an accurate representation of the fluid-structure interface. Berm\'udez \emph{et al.}~\cite{bermudez} have solved an interior elasto-acoustic problem in a three-dimensional setting, employing a displacement-based formulation on both the fluid and the solid domains, and a discretization consisting of linear tetrahedral finite elements for the solid and Raviart--Thomas elements of lowest order for the fluid; a further unknown is introduced on the interface between solid and fluid to impose the trasmission conditions. Brunner \emph{et al.}~\cite{brunner-junge-gaul} have treated the case of thin structures and dense fluids; the structural part is modeled with the finite element method, and the exterior acoustic problem is efficiently modeled with the Galerkin boundary element method. De Basabe and Sen \cite{debasabe-sen} have compared Finite Difference and Spectral Element methods for elastic wave propagation in media with a fluid-solid interface. Fischer and Gaul \cite{fischer-gaul} have proposed a coupling algorithm based on Lagrange multipliers for the simulation of structure-acoustic interaction; finite plate elements are coupled with a Galerkin boundary element formulation of the acoustic domain, and the interface pressure is interpolated as a Lagrange multiplier, thereby allowing for coupling non-matching grids. Flemisch \emph{et al.}~\cite{flemisch-wohlmuth} have considered a numerical scheme based on two independently generated grids on the elastic and acoustic domains, thereby allowing as much flexibility as possible, given that the computational grid in one subdomain can in general be considerably coarser than in the other subdomain. As a result, non-conforming grids appear at the interface of the two subdomains. Mandel \cite{mandel} has proposed a parallel iterative method for the solution of the linear equations resulting from the finite element discretization of the coupled fluid-solid systems in fluid pressure and solid displacement formulation, in harmonic regime. Mönköla \cite{monkola-sanna-paper} has examined the accuracy and efficiency of the numerical solution based on high-order discretizations, in the case of transient regime. Spatial discretization is performed by the Spectral Element method, and three different schemes are compared for time discretization. Péron \cite{victor-peron} has presented equivalent conditions and asymptotic models for the diffraction problem of elastic and acoustic waves in a solid medium surrounded by a thin layer of fluid medium in harmonic regime. Other noteworthy references in this field are \cite{popa,benthien-schenck,flemisch-kaltenbacher,hsiao-nigam,hsiao-sayas-weinacht,tonatiuh1,jeans-mathews,tromp,koreans}.

At the best of our knowledge, in all of the aforementioned works a well-posedness result for the mathematical formulation of the coupled problem cannot be found. In this work, the proof of existence and uniqueness for a strong solution is accomplished in a semigroup framework, by resorting to the Hille--Yosida theorem \cite[Chap.~7]{brezis}. Notice that a similar abstract setting wherein semigroup theory on Hilbert spaces can be invoked, was employed in \cite{tonatiuh2}; here, the problem of acoustic waves scattered by a piezoelectric solid is investigated.

The rest of the paper is organized as follows. In Section~\ref{sec:problem.statement} we give the formulation of the problem and prove the existence and uniqueness of the solution under suitable hypotheses on source terms and initial values. In Section~\ref{sec:disc.set} we introduce the discrete setting, with particular reference to the assumptions on the underlying polytopic mesh. In Section~\ref{sec:semi.disc.problem} we present the formulation of the {semi-discrete} problem.
In Section~\ref{sec:stability} we prove the stability of the semi-discrete formulation in a suitable energy norm. In Section~\ref{sec:error.estimates}, we prove $hp$-convergence results (with $h$ and $p$ denoting, as usual, the meshsize and the polynomial degree, respectively) for the error in the energy norm. The fully discrete formulation is discussed in Section~\ref{sec:fully.discrete}. Finally, in Section~\ref{sec:numerical.examples}, we present some numerical experiments carried out in a two-dimensional setting to validate the theoretical results. The proofs of two technical lemmas are postponed to \ref{app}.

%\section*{General notation}
%\begin{figure}%[h!]
%\includegraphics[scale=.35]{disegno}
%\caption{Schematic representation of the spatial domain where the problem is formulated.}
%\label{figura}
%\end{figure}
%
\section{The elasto-acoustic problem}\label{sec:problem.statement}
In what follows, scalar fields are represented by lightface letters, vector fields by boldface roman letters, and second-order tensor fields by boldface greek letters.
We let $\Omega \subset \bb R^d$, $d\in\{2,3\}$, denote an open bounded convex domain
with Lipschitz boundary, given by the union of two open disjoint bounded convex subdomains $\Omega_e$ and $\Omega_a$ representing an elastic and an acoustic domain, respectively. We denote by $\Gamma_{\mr I}
= \de\Omega_e \cap \de\Omega_a$ the \emph{interface}
between the two domains, also of Lipschitz regularity and with strictly positive surface measure. We assume that the following partitions hold:
$\de\Omega_e  = \Gamma_{eD} \cup \Gamma_{\mr I}$ and %\\
$\de\Omega_a  = \Gamma_{aD} \cup \Gamma_{\mr I}$,
where $\Gamma_{eD}$ and $\Gamma_{aD}$ also have strictly positive surface measure, and $\Gamma_{eD} \cap \Gamma_{\mr I} = \emptyset = \Gamma_{aD} \cap \Gamma_{\mr I}$. We further denote by $\bf n_e$ and $\bf n_a$ the outer unit normal vectors to $\de\Omega_e$ and $\de\Omega_a$,
respectively; thereby, $\bf n_e = -\bf n_a$ on $\Gamma_{\mr I}$. For $X\subseteq \wl\Omega$, we write $\bf L^2(X)$ in place of $L^2(X)^d$, with scalar product denoted by $(\dotp,\dotp)_X$ and associated norm $\|{\cdot}\|_X$. Analogously, we write $\bf H^l(X)$ in place of $H^l(X)^d$ for Hilbertian Sobolev spaces of vector-valued functions with index $l \ge 0$, equipped with norm $\|{\cdot}\|_{l,X}$ (so that $\|{\cdot}\|_{0,X} \equiv \|{\cdot}\|_X$ on $\bf H^0(X) \equiv \bf L^2(X)$). Given an integer $p\ge 1$, $\mathscr P_p(X)$ denotes the space spanned by polynomials of total degree at most $p$ on $X$. Given a subdivision $\cc T_h$ of $\Omega$
into disjoint open elements $\k$ such that $\wl \Omega = \bigcup_{\k\in\cc T_h} \wl\k$, we denote by $$\mathscr P_{\bf p}(\cc T_h) = \{v \in L^2(\Omega): v_{| \k} \in \mathscr P_{p_\k}(\k) \ \forall \k \in \cc T_h\}$$ the space of piecewise polynomial functions on $\cc T_h$, with $\bf p = (p_\k)_{\k\in \cc T_h}$, $p_\k \ge 1\, \forall \k\in\cc T_h$. Finally, for $T > 0$, we let $(0,T]$ denote a time interval. For the sake of readibility we omit, at times, the dependence on time $t \in (0,T]$. The first and second time derivatives of a scalar- or vector-valued function $\Psi = \Psi(t)$ are denoted by $\dot \Psi$ and $\ddot \Psi$, respectively.

The elasto-acoustic problem is formulated as follows:~for \emph{sufficiently smooth} loads per unit volume ${\bf f_e}$ and $f_a$, and initial conditions $(\bf u_0, \bf u_1, \phi_0,\phi_1)$, find $(\bf u,\phi)$ such that:
\begin{align}\label{strong_form}
\left\{
\begin{alignedat}{2}
\rho_e \ddot{\bf u} + 2\rho_e \z\dot{\bf u}  +
\rho_e\z^2 \bf u - \divb\bm\sigma(\bf u) &= {\bf f_e} &\qquad&\hbox{in } \Omega_e \times(0,T],\\[4pt]
\bf u &= \bf 0 &\qquad& \hbox{on }\Gamma_{eD} \times (0,T],\\
\bm\sigma(\bf u)\bf n_e & = - \rho_a\dot\phi\, \bf n_e &\qquad& \hbox{on }\Gamma_{\mr I}\times (0,T], 
\\[4pt]
\bf u(0) &= \bf u_0, &\qquad& \hbox{in }\Omega_e,\\
\dot{\bf u}(0) & = \bf u_1 &\qquad& \hbox{in }\Omega_e;\\[12pt]
c^{-2}\ddot\phi - \Del\phi &= f_a &\qquad&\hbox{in } \Omega_a \times (0,T],\\[4pt]
\phi &= 0 &\qquad& \hbox{on }\Gamma_{aD} \times (0,T],\\
{\de\phi}/{\de\bf n_a} &= - \dot{\bf u} \dotp \bf n_a &\qquad& \hbox{on }\Gamma_{\mr I}\times (0,T]; 
 \\[4pt]
\phi(0) &= \phi_0, &\qquad& \hbox{in }\Omega_a,\\
\dot{\phi}(0) & = \phi_1 &\qquad& \hbox{in }\Omega_a.
\end{alignedat}
\right.
\end{align}
Here, $\bf u \colon \wl\Omega_e \times [0,T] \to \bb R^3$ and $\phi\colon\wl \Omega_a \times [0,T] \to \bb R$ represent the {displacement vector} and the {acoustic potential}, respectively. Moreover, $\rho_e$ is the {density of the elastic body} $\Oe$, with $0 < \rho_e^- \le \rho_e \le \rho_e^+ < +\infty$ a.e.~in $\Oe$, $\bm\sigma(\bf u) = \bb C \bm\eps(\bf u)$ is the {Cauchy stress} tensor, with $\bb C$ the fourth-order, symmetric and uniformly elliptic elasticity tensor, and \mbox{$\bm\eps(\bf u) = \frac{1}{2}\left(\grad \bf u + \grad \bf u^T\right)$} is the {strain} tensor. Also, we denote by $\rho_a$ the {density of the acoustic region} $\Omega_a$, with $0 < \rho_a^- \le \rho_a \le \rho_a^+ < +\infty$ a.e.~in $\Oa$, and by $c > 0$ the {speed of the acoustic wave}. The \emph{damping factor} $\z \ge 0$, $\z\in L^\infty(\Omega)$, is a decay factor with the dimension of the inverse of time. Usually, in engineering applications, the viscoelastic behavior of a material is expressed through the adimensional quality factor $Q = Q_0 f/f_0$, where $f_0$ is a reference frequency and $Q_0 = \pi f_0 / \z$ \cite{kosloff}.

Notice that the coupled nature of the problem is to be ascribed to the trasmission conditions imposed on $\Gamma_{\mr I} \times (0,T]$.  The first one takes account of the acoustic pressure exterted by the fluid onto the elastic body through the interface, whereas the second one expresses the continuity of the normal component of the velocity field at the interface.

Let us now introduce the Hilbertian Sobolev spaces 
\begin{equation}\label{spaces}
\begin{alignedat}{2}
\bf H^1_D(\Omega_e) & = \{\bf v \in \bf H^1(\Omega_e) : \bf v = \bf 0\hbox{ on }\Gamma_{eD}\}, &\ \ %\\
%\quad
 H^1_D(\Omega_a) & = \{v \in H^1(\Omega_a) :  v =  0\hbox{ on }\Gamma_{aD}\}, \\[5pt]
\bf H^{\Del}_{\bb C}(\Omega_e) & = \{ \bf v \in \bf L^2(\Omega_e) : \divb\bb C\bm\eps(\bf v) \in \bf L^2(\Omega_e)\}, &\ \
 H^{\Del}(\Omega_a) & = \{ v \in L^2(\Omega_a) : \Del v \in L^2(\Omega_a)\}.
\end{alignedat}
\end{equation}
The existence and uniqueness of a strong solution to \eqref{strong_form} can be inferred in the framework of the Hille--Yosida theory. In particular, the following theorem holds.
\begin{theorem}[Existence and uniqueness]\label{well_posedness}
Assume that the initial data have the following regularity:
\begin{align}\label{initial_data}
%\begin{equation}\begin{array}{cc}
%\begin{aligned}
\bf u_0 \in  \bf H^\Del_{\bb C}(\Omega_e) \cap \bf H^1_D(\Omega_e),\ \ \bf u_1 \in \bf H^1_D(\Omega_e), \quad
\phi_0 \in H^\Del(\Omega_a) \cap H^1_D(\Omega_a), \ \ \phi_1 \in H^1_D(\Omega_a)
%\end{aligned}
%&\ \
%\begin{aligned}
%\end{aligned}
\end{align}
%\end{array}\end{equation}
and that the source terms are such that
\begin{equation}\label{source_term}
{\bf f_e} \in C^1([0,T];\bf L^2(\Omega_e)),\quad f_a \in C^1([0,T]; L^2(\Oa)).
\end{equation}
Then, problem \eqref{strong_form} admits a unique strong solution $(\bf u, \phi)$ such that
\begin{equation}\label{sol_regularity}
\begin{aligned}
\bf u &\in C^2([0,T];\bf L^2(\Omega_e)) \cap C^1([0,T];\bf H^1_D(\Omega_e))  \cap C^0([0,T];\bf H^\Del_{\bb C}(\Oe)\cap \bf H^1_D(\Oe)),\\[2ex]
\phi &\in C^2([0,T];L^2(\Omega_a))\cap C^1([0,T]; H^1_D(\Omega_a))  \cap C^0([0,T]; H^\Del(\Oa)\cap  H^1_D(\Oa)).
\end{aligned}
\end{equation}
\end{theorem}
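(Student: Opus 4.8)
The plan is to recast problem \eqref{strong_form} as a first-order-in-time abstract Cauchy problem and to apply the Hille--Yosida theory, exactly as announced in the introduction. Concretely, I would introduce the velocity unknowns $\bf w := \dot{\bf u}$ and $\psi := \dot\phi$, set $U := (\bf u,\bf w,\phi,\psi)$, and work in the Hilbert space
\[
\cc H := \bf H^1_D(\Omega_e)\times\bf L^2(\Omega_e)\times H^1_D(\Omega_a)\times L^2(\Omega_a)
\]
endowed with the energy inner product
\[
(U,\wt U)_{\cc H} := (\bm\sigma(\bf u),\bm\eps(\wt{\bf u}))_{\Omega_e} + (\rho_e\bf w,\wt{\bf w})_{\Omega_e} + (\rho_a\grad\phi,\grad\wt\phi)_{\Omega_a} + (\rho_a c^{-2}\psi,\wt\psi)_{\Omega_a},
\]
which is equivalent to the natural norm of $\cc H$ by the uniform ellipticity of $\bb C$, Korn's inequality on $\bf H^1_D(\Omega_e)$ and the Poincaré inequality on $H^1_D(\Omega_a)$ (both $\Gamma_{eD}$ and $\Gamma_{aD}$ having positive surface measure). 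I then define $\cc A U := \big(\bf w,\ \rho_e^{-1}\divb\bm\sigma(\bf u) - 2\z\bf w - \z^2\bf u,\ \psi,\ c^2\Del\phi\big)$ on the domain $D(\cc A)$ of those $U\in\cc H$ with $\bf w\in\bf H^1_D(\Omega_e)$, $\psi\in H^1_D(\Omega_a)$, $\bf u\in\bf H^\Del_{\bb C}(\Omega_e)$, $\phi\in H^\Del(\Omega_a)$ and satisfying the two transmission conditions $\bm\sigma(\bf u)\bf n_e = -\rho_a\psi\,\bf n_e$ and $\de\phi/\de\bf n_a = -\bf w\dotp\bf n_a$ on $\Gamma_{\mr I}$, and I set $F(t) := (\bf 0,\ \rho_e^{-1}{\bf f_e}(t),\ 0,\ c^2 f_a(t))$. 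With these choices \eqref{strong_form} is equivalent to $\dot U = \cc A U + F$, $U(0) = U_0 := (\bf u_0,\bf u_1,\phi_0,\phi_1)$; assumption \eqref{initial_data} (together with the compatibility of the transmission conditions at $t=0$, implicit in the notion of strong solution) gives $U_0\in D(\cc A)$, and \eqref{source_term} gives $F\in C^1([0,T];\cc H)$.

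The core of the argument is to show that $\cc A$ is \emph{quasi-m-dissipative}. Integrating by parts the terms of $(\cc A U,U)_{\cc H}$ that contain $\divb\bm\sigma(\bf u)$ and $\Del\phi$ and using the homogeneous Dirichlet conditions built into $D(\cc A)$, one finds for $U\in D(\cc A)$
\[
(\cc A U, U)_{\cc H} = \big((\bm\sigma(\bf u)\bf n_e)\dotp\bf w\big)_{\Gamma_{\mr I}} + \big(\rho_a\,\de\phi/\de\bf n_a,\ \psi\big)_{\Gamma_{\mr I}} - 2(\rho_e\z\bf w,\bf w)_{\Omega_e} - (\rho_e\z^2\bf u,\bf w)_{\Omega_e}.
\]
Inserting the transmission conditions and the identity $\bf n_e = -\bf n_a$ makes the two interface integrals cancel, while the remaining damping terms are bounded by $C\|U\|_{\cc H}^2$ since $\z\in L^\infty(\Omega)$ (again using Korn and the ellipticity of $\bb C$ to control $\|\bf u\|_{\Omega_e}$). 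Hence $(\cc A U - \omega U, U)_{\cc H}\le 0$ for $\omega$ large enough; equivalently, the damping contribution $U\mapsto(\bf 0, -2\z\bf w - \z^2\bf u, 0, 0)$ is a bounded perturbation on $\cc H$ of the conservative part $\cc A_0$, for which $(\cc A_0 U,U)_{\cc H} = 0$.

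It then remains to verify the range condition: $\lambda I - \cc A$ is surjective for some (hence every) $\lambda>\omega$. Given $F = (\bf f_1,\bf f_2,g_1,g_2)\in\cc H$, I would eliminate $\bf w = \lambda\bf u - \bf f_1$ and $\psi = \lambda\phi - g_1$, reducing $(\lambda I - \cc A)U = F$ to a stationary coupled elasto-acoustic problem for $(\bf u,\phi)\in\bf H^1_D(\Omega_e)\times H^1_D(\Omega_a)$ in which the transmission conditions appear as natural boundary terms, with associated bilinear form
\begin{multline*}
a\big((\bf u,\phi),(\bf v,q)\big) := \lambda^2(\rho_e\bf u,\bf v)_{\Omega_e} + (\bm\sigma(\bf u),\bm\eps(\bf v))_{\Omega_e} + \lambda^2(\rho_a c^{-2}\phi,q)_{\Omega_a}\\
{}+ (\rho_a\grad\phi,\grad q)_{\Omega_a} + \lambda\big(\rho_a\phi,\,\bf v\dotp\bf n_e\big)_{\Gamma_{\mr I}} + \lambda\big(\rho_a\,\bf u\dotp\bf n_a,\,q\big)_{\Gamma_{\mr I}}.
\end{multline*}
This form is continuous on $\big(\bf H^1_D(\Omega_e)\times H^1_D(\Omega_a)\big)^2$ (the $\bf H^1$-regularity of the displacement and of the potential being exactly what makes the $\Gamma_{\mr I}$-traces admissible) and, because its two $\Gamma_{\mr I}$-terms cancel when $(\bf v,q)=(\bf u,\phi)$, coercive by Korn, Poincaré and ellipticity; Lax--Milgram yields a unique $(\bf u,\phi)$. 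Testing against functions supported in $\Omega_e$, resp.\ $\Omega_a$, then shows $\divb\bm\sigma(\bf u)\in\bf L^2(\Omega_e)$ and $\Del\phi\in L^2(\Omega_a)$, and the generalized Green's formula identifies the transmission conditions, so that the corresponding $U\in D(\cc A)$ solves $(\lambda I - \cc A)U = F$. By the Lumer--Phillips / Hille--Yosida theorem \cite[Chap.~7]{brezis}, $\cc A$ therefore generates a strongly continuous semigroup on $\cc H$.

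Finally, since $U_0\in D(\cc A)$ and $F\in C^1([0,T];\cc H)$, the classical theory of inhomogeneous Cauchy problems provides a unique solution $U\in C^1([0,T];\cc H)\cap C^0([0,T];D(\cc A))$; reading off the components of $U$ and using $\dot{\bf u}=\bf w$, $\dot\phi=\psi$ recovers precisely the regularity \eqref{sol_regularity}, and $(\bf u,\phi)$ solves \eqref{strong_form}, uniqueness being inherited from the abstract problem. The main obstacle I anticipate is the rigorous treatment of the transmission conditions inside $D(\cc A)$ — in particular, giving a precise meaning to $\bm\sigma(\bf u)\bf n_e$ as an element of $\bf H^{-1/2}(\Gamma_{\mr I})$ for $\bf u\in\bf H^\Del_{\bb C}(\Omega_e)$ (and to $\de\phi/\de\bf n_a$ for $\phi\in H^\Del(\Omega_a)$), and verifying that the Lax--Milgram solution genuinely satisfies them — together with the sign bookkeeping (the weighting by $\rho_a$, the identity $\bf n_e=-\bf n_a$) that simultaneously drives the dissipativity of $\cc A$ and the coercivity of $a$.
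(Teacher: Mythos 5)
Your proposal is correct and follows essentially the same route as the paper: the identical first-order reformulation in the product space $\bf H^1_D(\Omega_e)\times\bf L^2(\Omega_e)\times H^1_D(\Omega_a)\times L^2(\Omega_a)$, cancellation of the two interface terms to obtain (quasi-)dissipativity, and elimination of the velocity unknowns followed by Lax--Milgram for the resolvent equation. The only cosmetic difference is that the paper augments the inner product with the weighted term $(\rho_e\z^2\bf u_1,\bf u_2)_{\Omega_e}$ so that the generator is exactly monotone (and the zeroth-order coefficient in the stationary form comes out as $\rho_e(\z+1)^2$ rather than your $\lambda^2\rho_e$), whereas you treat the damping as a bounded perturbation and settle for quasi-dissipativity; both variants are admissible inputs to the Hille--Yosida theorem.
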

\begin{remark}[Boundary conditions] We consider formulation \eqref{strong_form} for ease of presentation, but more general boundary conditions, such as Dirichlet and Neumann nonhomogeneous conditions, can be taken into account, provided the data are sufficiently regular. In this case, suitable trace liftings of boundary data have to be introduced, by resorting to a one-parameter family of static problems (where the parameter is time). Then, it can be shown that a result analogous to \eqref{sol_regularity} holds, provided boundary data have $C^3$-regularity in time (see, e.g., \cite[Theorem~1.1]{bonaldi-m3as}, where a similar issue arises).
\end{remark}
\begin{remark}[Convexity]
The above result also holds without any convexity assumption on neither $\Omega$ nor the subdomains $\Oe$ and $\Oa$. On the other hand, this hypothesis is necessary to ensure that the exact solution $(\bf u, \phi)$ is (at least) $H^2$-regular, so that the traces of  $\grad \bf u$ and $\grad\phi$ on  $(d-1)$-dimensional simplices are both well defined, in view of the forthcoming analysis of the semi-discrete problem (cf.~Section~\ref{sec:semi.disc.problem}).
\end{remark}
\begin{proof}[Proof of Theorem~\ref{well_posedness}] %See Section~\ref{app:well_posedness} of Appendix \ref{app}.
Let $\bf w = \dot{\bf u}$, $\pphi = \dot\phi$, and $\cc U= (\bf u, \bf w, \phi, \pphi)$. 
We introduce the Hilbert space
$$ \bb H = \bf H^1_D(\Oe) \times \bf L^2(\Oe) \times H^1_D(\Oa) \times L^2(\Oa),$$
equipped with the following scalar product:
\begin{equation}\label{sc_prod}
\begin{aligned}(\cc U_1,\cc U_2)_{\bb H} = & \ (\rho_e\z^2 \bf u_1,\bf u_2)_\Oe + \left(\bb C\bm\eps(\bf u_1),\bm\eps(\bf u_2)\right)_\Oe \\
& +(\rho_e \bf w_1,\bf w_2)_\Oe + (\rho_a \grad\phi_1,\grad\phi_2)_\Oa + (c^{-2}\rho_a \pphi_1,\pphi_2)_\Oa.
\end{aligned}
\end{equation}
Then, we define the operator $A \colon D(A) \subset \bb H \to \bb H$ by 
$$A \cc U = \big({-\bf w},\ { 2\z \bf w + \z^2 \bf u - \rho_e^{-1} \divb\bb C\bm\eps (\bf u)}, \ {-\pphi}, \ {-c^2\Del\phi}\big)\quad \forall \cc U \in D(A),$$
where the domain $D(A)$ of the operator is the linear subspace of $\bb H$ defined as follows (cf.~definition~\eqref{spaces}):
\begin{equation}\label{domain}
\begin{aligned}
D(A) = \Big\{  \cc U\in \bb H : & \ \bf u \in \bf H^\Del_{\bb C}(\Oe),\ \bf w \in \bf H^1_D(\Oe), % \\
 %\div \bb C\bm\eps(\bf u) \in \bf L^2(\Oe), \ 
%\bm\eps(\bf w) \in \bf L^2(\Oe),\\ & \ \Del\phi \in L^2(\Oa), \ \grad\pphi \in \bf L^2(\Oa); \\
%& 
 \ \phi \in H^\Del(\Oa),\ \pphi \in H^1_D(\Oa); \\
%&\ \bf u = \bf 0,\ \bf w = \bf 0\text{ on }\Gamma_{eD}; \ \phi = 0,\ \pphi = 0\text{ on }\Gamma_{aD}; \\
&\left(\bb C\bm\eps(\bf u) + \rho_a \pphi \bf I\right) \bf n_e = \bf 0\text{ on }\Gamma_{\mr I},
\ \ {\left(\grad\phi + \bf w\right)}\dotp{\bf n_a} = 0\text{ on }\Gamma_{\mr I} \Big\}.
\end{aligned}
\end{equation}
Finally, let $$\cc F = (\bf 0, \rho_e^{-1} {\bf f_e}, 0, c^2 f_a).$$
Problem \eqref{strong_form} can then be reformulated as follows:~given $\cc F \in C^1([0,T]; \bb H)$ and $\cc U_0 \in D(A)$, find \mbox{$\cc U\in C^1([0,T];\bb H) \cap C^0([0,T];D(A))$} such that
\begin{equation}\label{abstract_eq}
\begin{aligned}
\frac{\d\cc U}{\d t}(t) + A \cc U(t) &= \cc F(t),\quad t\in(0,T],\\
\cc U(0) &= \cc U_0.
\end{aligned}
\end{equation}
Owing to the Hille--Yosida Theorem, this problem is well-posed provided $A$ is maximal monotone, i.e., $(A\cc U,\cc U)_{\bb H} \ge 0\,\,\forall \cc U \in D(A)$ and $I+A$ is surjective from $D(A)$ onto $\bb H$. By the definition \eqref{sc_prod} of the scalar product in $\bb H$, we have
$$
\begin{aligned}
(A \cc U, \cc U)_{\bb H} = &\ (-\rho_e\z^2 \bf w,\bf u)_\Oe + (-\bb C\bm\eps(\bf w), \bm\eps(\bf u))_\Oe %\\ & \ 
+ \left(2\rho_e\z \bf w + \rho_e\z^2 \bf u  - \divb\bb C\bm\eps(\bf u), \bf w\right)_\Oe \\
& \
+ (-\rho_a\grad\varphi,\grad\phi)_\Oa + (-\rho_a\Del\phi, \varphi)_\Oa. 
\end{aligned}
$$
Taking into account the definition \eqref{domain} of the domain $D(A)$ and integrating by parts, we obtain
$$(A\cc U, \cc U)_{\bb H} = (2\rho_e\z\bf w,\bf w)_\Oe \ge 0,$$
i.e., $A$ is monotone. It then remains to verify that, for any $\cc F \equiv ({\bf F}_1, {\bf F}_2, F_3, F_4) \in \bb H$, there is (a unique) $\cc U \in D(A)$ such that $\cc U + A\cc U = \cc F$, that is,
\begin{equation}\label{maximality}
\hspace{-1cm}
\begin{aligned}
\bf u-\bf w &= {\bf F}_1,\\
(1+2\z)\bf w + \z^2 \bf u - \rho_e^{-1}\divb\bb C\bm\eps(\bf u) &= {\bf F}_2,\\
\phi -\pphi &= F_3,\\
\pphi -c^2\Del\phi & = F_4.
\end{aligned}
\end{equation}
The first and third equations allow to express $\bf w$ and $\pphi$ in terms of $\bf u$ and $\phi$, respectively; substituting these two relations in the other two equations gives
\begin{equation}\label{reduced_system}
\begin{aligned}
(\z+1)^2\bf u - \rho_e^{-1}\divb\bb C\bm\eps(\bf u) &=  (1+2\z){\bf F}_1 + {\bf F}_2,\\
\phi - c^2\Del\phi &= F_3 + F_4.
\end{aligned}
\end{equation}
Since $\bf n_e = -\bf n_a$ on $\Gamma_{\mr I}$, and owing to the first and third equations of \eqref{maximality} and to the transmission conditions on $\Gamma_{\mr I}$ embedded in the definition of $D(A)$, the variational formulation of the above problem reads:~find ${(\bf u, \phi)}\in{\bf H^1_D(\Oe)}\times{ H^1_D(\Oa)}$ such that, for any $(\bf v, \psi)\in \bf H^1_D(\Oe) \times H^1_D(\Oa)$,
$$\mathscr A((\bf u,\phi),(\bf v,\psi)) = \mathscr L(\bf v, \psi),$$
where
\begin{align*}
\mathscr A((\bf u, \phi),(\bf v,\psi)) = & 
\ (\rho_e(\z+1)^2 \bf u, \bf v)_\Oe + (\bb C\bm\eps(\bf u),\bm\eps(\bf v))_\Oe
+ (\rho_a\phi\bf n_e,\bf v)_{\Gamma_{\mr I}}  \\
& 
+ (\rho_a c^{-2} \phi,\psi)_\Oa + (\rho_a\grad\phi,\grad\psi)_\Oa - (\rho_a\bf u\dotp\bf n_e,\psi)_{\Gamma_{\mr I}}
\end{align*}
and
\begin{align*}
\mathscr L(\bf v,\psi) \! = %& \, 
(\rho_e(1+2\z)\bf F_1 + \rho_e \bf F_2,\bf v)_\Oe + (\rho_a F_3\bf n_e, \bf v)_{\Gamma_{\mr I}} %\\
%& 
+ (\rho_a c^{-2}(F_3+F_4),\psi)_\Oa - (\rho_a\bf F_1\dotp \bf n_e,\psi)_{\Gamma_{\mr I}}.
\end{align*}
This problem is well-posed owing to the Lax--Milgram Lemma (notice, in particular, that the bilinear form $\mathscr A$ is coercive since the interface contributions vanish when $\bf v = \bf u$ and $\psi=\phi$). In addition, thanks to equations \eqref{reduced_system} we infer that $\bf u \in \bf H^\Del_\bb C(\Oe) \cap \bf H^1_D(\Oe)$ and $\phi \in H^\Del(\Oa) \cap H^1_D(\Oa)$. This in turn gives {$(\bf w,\pphi) \in \bf H^1_D(\Oe)\times H^1_D(\Oa)$} thanks to the first and third equations of \eqref{maximality}. Thus, $\cc U\in D(A)$ and the proof is complete.
\end{proof}

With a view towards introducing the semi-discrete counterpart of \eqref{strong_form} and to carry out its analysis, we observe that the solution given by \eqref{sol_regularity} satisfies the following \emph{weak form} of \eqref{strong_form}: 
%for any $t\in (0,T]$, given ${\bf f_e}(t) \in \bf L^2(\Omega_e)$ and initial conditions $(\bf u_0, \bf u_1, \phi_0,\phi_1) \in \bf H^1_D(\Omega_e)\times \bf H^1(\Omega_e) \times H^1_D(\Omega_a)\times H^1(\Oa)$, g {$(\bf u(t),\phi(t)) \in \bf H^1_D(\Oe)\times H^1_D(\Oa)$} such that, 
for any $t\in (0,T]$, and all $(\bf v, \psi) \in \bf H^1_D(\Oe) \times H^1_D(\Oa)$,
\begin{equation}
\begin{multlined}\label{weak_form}
%\begin{alignedat}{1}
%\hspace{-1.5cm}
(\rho_e\ddot{\bf u}(t),\bf v)_{\Omega_e} +(c^{-2}\rho_a\ddot\phi(t),\psi)_{\Omega_a} + (2\rho_e\z \dot{\bf u}(t),\bf v)_{\Omega_e} + (\rho_e\z^2\bf u(t),\bf v)_{\Omega_e} \\[4pt]
+\cc A_e(\bf u(t),\bf v)  + \cc A_a(\phi(t),\psi) +\cc I_e(\dot\phi(t), \bf v) + \cc I_a(\dot{\bf u}(t), \psi)
\\  = ({\bf f_e}(t), \bf v)_{\Omega_e} + (\rho_a f_a(t),\psi)_\Oa.
\end{multlined}
\end{equation}
\begin{remark}[Weak formulation]
This does not guarantee, of course, that looking for a solution of weaker regularity is a well-posed problem. Nevertheless, in view of the semi-discrete energy error analysis (cf.~Section~\ref{sec:error.estimates}, in particular Theorem~\ref{error-estimate}), we have to assume that the solution is smooth enough in time and space.
\end{remark}
Here, the bilinear forms $\cc A_e \colon \bf H^1_D(\Omega_e) \times\bf H^1_D(\Omega_e) \to \bb R$, \, $\cc I_e \colon H^1_D(\Omega_a) \times \bf H^1_D(\Omega_e) \to \bb R$, {$\cc A_a \colon H^1_D(\Omega_a) \times H^1_D(\Omega_a) \to \bb R$}, and $\cc I_a\colon \bf H^1_D(\Omega_e) \times H^1_D(\Omega_a) \to \bb R$
are defined as follows:
\begin{align}
\begin{aligned}
\cc A_e(\bf u, \bf v) & = ({\bb C}\bm\eps(\bf u),\bm\eps(\bf v))_{\Omega_e}, \\
\cc A_a(\phi,\psi) & = (\rho_a\grad\phi,\grad\psi)_{\Omega_a},
\end{aligned}
&\qquad
\begin{aligned} 
{\cc I_e(\psi,\bf v)}  & = (\rho_a\psi \bf n_e, \bf v)_{\Gamma_{\mr I}}, \\
{\cc I_a(\bf v, \psi)} & = (\rho_a {\bf v}\dotp \bf n_a,\psi)_{\Gamma_{\mr I}}.
\end{aligned}
\end{align}
Notice that we have multiplied the second evolution equation by $\rho_a$ to ensure (skew) symmetry of the two interface terms (since $\bf n_a = -\bf n_e$).

%
%\begin{remark}[Terms with time derivatives]
%In \eqref{weak_form}, we are implicitly assuming that, for any \emph{fixed} $t \in (0,T]$, it holds $\ddot{\bf u}(t) \in \bf L^2(\Omega_e)$, $\dot{\bf u}(t) \in \bf H(\div;\Omega_e)$, $\ddot\phi(t) \in L^2(\Omega_a)$ and $\dot\phi(t) \in L^2(\Gamma_{\mr I})$. Of course, in order to give \eqref{weak_form} a meaning \emph{for all} $t \in (0,T]$, hence considering an external load \mbox{${\bf f_e} \in L^2(0,T;\bf L^2(\Omega_e))$}, the $L^2$-scalar products and bilinear forms containing time derivatives have actually to be replaced by suitable \emph{duality pairings}.
%\end{remark}
%
%
%------------------------------------------------------------------------------%
%------------------------------------------------------------------------------%

\section{Discrete setting}\label{sec:disc.set}
Assuming that $\Omega_e$ and $\Omega_a$ are polygonal or polyhedral, we now introduce a polytopic mesh $\cc T_h$ over $\Omega$. We denote by $h_\k$ the diameter of an element $\k\in\cc T_h$, and set $h=\max_{\k \in\cc T_h} h_\k$. We assume that $\cc T_h$ is \emph{compliant with the underlying geometry}, i.e., the decomposition $\cc T_h = \cc T_h^e \cup \cc T_h^a$ holds, where $\cc T_h^e = \{\k \in \cc T_h : \wl \k \subseteq \wl\Omega_e\}$ and $\cc T_h^a= \{\k \in \cc T_h : \wl \k \subseteq \wl\Omega_a\}$.
%, so that $\wl\Omega_e = \bigcup_{\k\in\cc T_h^e} \wl\k$ and $\wl\Omega_a = \bigcup_{\k\in\cc T_h^a} \wl\k$. 
We assume that $\bb C$ and $\rho_a$ are element-wise constant, and set 
%\begin{subequations}
%\begin{alignat}{2}
\begin{equation}\label{C_k}
\wl{\bb C}_\k =  ({ |\bb C^{\nf12}|_2^2})_{| \k} \ \ \forall\k \in \cc T_h^e, \qquad
 \wl{\rho}_{a,\k}  = {\rho_a}_{| \k} \ \ \forall \k \in \cc T_h^a.
%\end{alignat}
%\end{subequations}
\end{equation}
Here we have denoted by $|{\cdot}|_2$ the operator norm induced by the $\ell^2$-norm on $\bb R^n$, with $n$ the dimension of the space of symmetric second-order tensors ($n=3$ if $d=2$, $n=6$ if $d=3$). With each element of $\cc T_h^e$ (resp.~$\cc T_h^a$), we associate a polynomial degree $p_{e,\k} \ge 1$ (resp.~$p_{a,\k} \ge 1$), and introduce the following finite-dimensional spaces:
$$
\begin{alignedat}{1}
\bf V_{\! h}^e & = [\mathscr P_{\bf p_e}(\cc T_h^e)]^d = \left\{ {\bf v}_{h} \in \bf L^2(\Omega_e) : \bf v_{h | \k} \in [\mathscr P_{p_{e,\k}}(\k)]^d \ \forall \k \in \cc T_h^e \right\}, \\
V_h^a & = \mathscr P_{\bf p_a}(\cc T_h^a) = \left\{ \psi_h \in L^2(\Omega_a) : \psi_{h | \k} \in \mathscr P_{p_{a,\k}}(\k) \ \forall \k \in \cc T_h^a \right\}.\\
\end{alignedat}
$$
%We allow $p_{e,\k}$ and $p_{a,\k}$ to vary element-wise; we will point this out in the notation when necessary, replacing $p_{e,\k}$ by $p_{e,\k}$ and $p_{a,\k}$ by $p_{a,\k}$.
For an integer $l\ge 1$, we also introduce the {broken Sobolev spaces}
\begin{equation}\label{h_ell_brise}
\begin{aligned}
\bf H^l(\cc T_h^e)&=\left\{\bf v\in\bf L^2(\Omega_e)\,:\, \bf v_{|\k}\in \bf H^l(\k) \ \forall \k\in\cc T_h^e\right\},\\
H^l(\cc T_h^a)&=\left\{\psi \in L^2(\Omega_a)\,:\, \psi_{|\k}\in H^l(\k)\ \forall \k\in\cc T_h^a\right\}.
\end{aligned}
\end{equation}
%equipped, unless noted otherwise, with the broken seminorms $\|{\cdot}\|_{\bf H^l(\cc T_h)}$ and $\|{\cdot}\|_{H^l(\cc T_h)}$
%defined by
%\begin{equation}\label{norme_brisee}
%\begin{alignedat}{2}
%&\forall \bf v \in \bf H^{l}(\cc T_h), &\qquad \|\bf v\|_{H^{l}(\cc T_h)} &= \left(\sum_{\k\in\cc T_h}\|\bf v\|_{\bf H^{l}(\k)}^2\right)^{\nicefrac12} \!\!, \\
%&\forall \psi \in H^{l}(\cc T_h), &\qquad \|\psi\|_{H^{l}(\cc T_h)} &= \left(\sum_{\k\in\cc T_h}\|\psi\|_{H^{l}(\k)}^2\right)^{\nicefrac12} \!\!.
%\end{alignedat}
%\end{equation}

Henceforth, we write $x \lesssim  y$ and $x \gtrsim  y$ in place of $x \le Cy$ and $x \ge Cy$ respectively, for $C > 0$ independent of the discretization parameters (polynomial degree and meshsize), as well as of the \emph{number of faces} of a mesh element, but possibly depending on material properties, such as $\bb C$, $\rho_e$, $c$, and $\rho_a$.
\subsection{Grid assumptions}
We term \emph{interface} of $\cc T_{h}$ the intersection of the boundaries of any two neighboring elements of $\cc T_h$. This definition allows for the treatment of situations where hanging nodes or edges are present. Therefore, for $d = 2$, an interface will always consist of a piecewise linear segment. On the other hand, for $d = 3$, an interface will be given by the union of general polygonal surfaces; we thereby assume that each planar section of a given interface may be subdivided into a set of co-planar triangles. We refer to such $(d - 1)$-dimensional simplices (line segments for $d=2$, triangles for $d=3$), whose union determines an interface of $\cc T_h$, as \emph{faces}. We denote by $\cc F_h$ the set of all faces of $\cc T_h$. Also, let
\begin{equation}\label{k_interf}
\cc T_{h,\mr I} = \{\k \in \cc T_h : \de \k \cap \Gamma_{\mr I} \neq \emptyset\}
\end{equation}
denote the set of elements sharing a part of their boundary with $\Gamma_{\mr I}$, and $\cc T_{h,\mr I}^e = \cc T_{h,\mr I} \cap \cc T_h^e$, $\cc T_{h,\mr I}^a = \cc T_{h,\mr I} \cap \cc T_h^a$. We then define the set of faces laying on $\Gamma_{\mr I}$ as follows:
\begin{equation}\label{f_interf}
\cc F_{h,\mr I} = \{ F \in \cc F_h : F \subset \de\k^e \cap \de \k^a,\ \k^e \in \cc T_{h,\mr I}^e,\  \k^a \in \cc T_{h,\mr I}^a\}
\end{equation}
(see Figure~\ref{notation}).
Hence, we assume the following decomposition: $\cc F_h = \cc F_h^e \cup \cc F_{h,\mr I} \cup \cc F_h^a$, where $\cc F_h^e$ and $\cc F_h^a$ collect, respectively, all faces of $\cc T_h^e$ and of $\cc T_h^a$ that \emph{do not lay} on $\Gamma_{\mr I}$. Further, $\cc F_h^e$ and $\cc F_h^a$ are decomposed as follows: $\cc F_h^e = \cc F_h^{e,\mr i} \cup \cc F_h^{e,\mr b}$, $\cc F_h^a = \cc F_h^{a,\mr i} \cup \cc F_h^{a,\mr b}$, where $\cc F_h^{e,\mr i}$ and $\cc F_h^{a,\mr i}$ collect the \emph{internal faces} of $\cc T_h^e$ and $\cc T_h^a$, respectively, and $\cc F_h^{e,\mr b}$, $\cc F_h^{a,\mr b}$ collect the \emph{boundary faces} of $\cc T_h^e$ and $\cc T_h^a$, respectively.
\begin{figure}
\centering
\includegraphics[keepaspectratio=true,scale=.6]{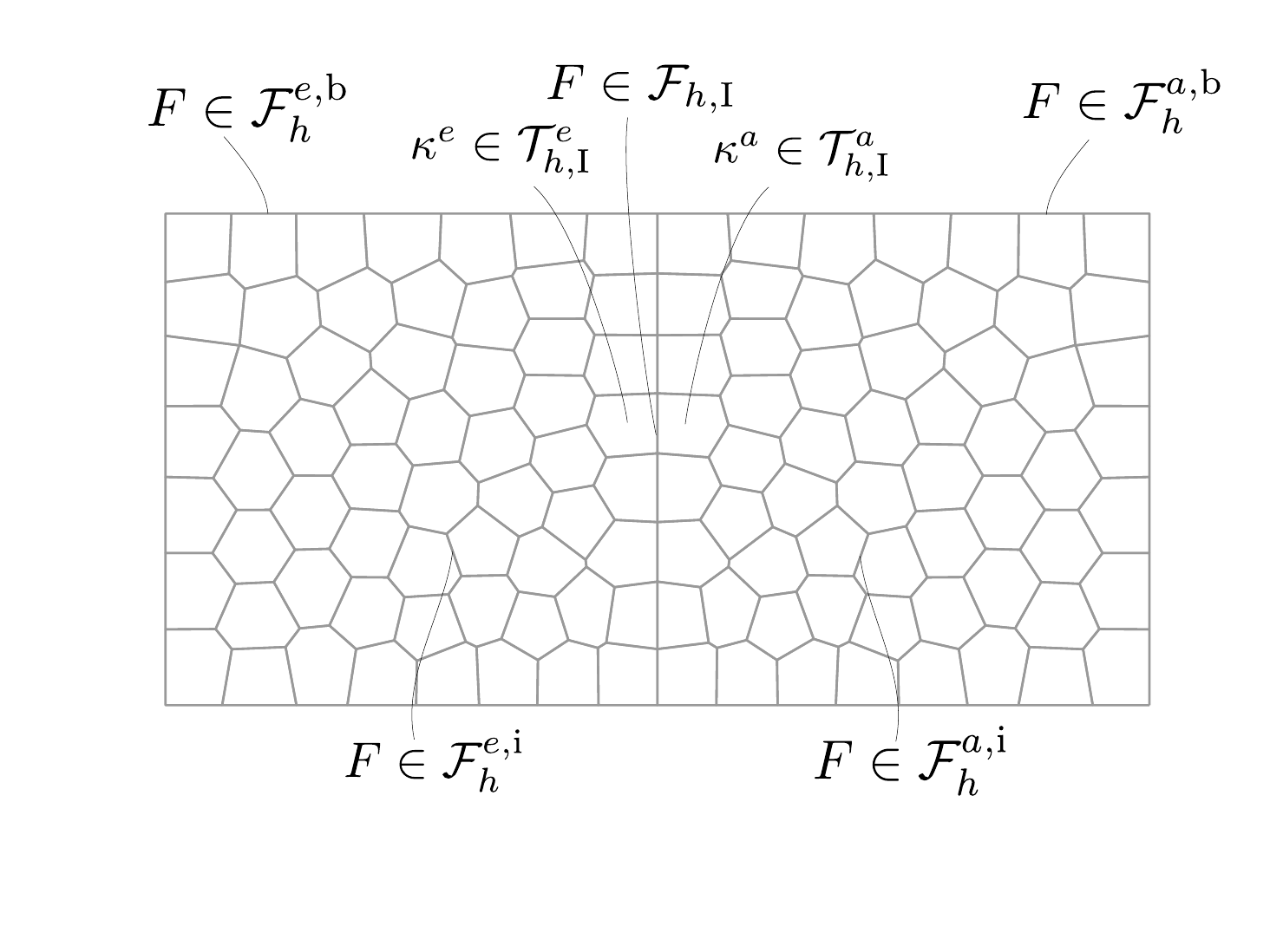}
\caption{Explanation of the employed notation for mesh elements and faces, in a two-dimensional setting. The left domain is elastic, the right one acoustic.}
\label{mesh}
\end{figure}

We can now proceed to state the main assumptions on $\cc T_{h}$, referring to \cite{cangiani-book} for further details.
\begin{subtheorem}{assumption}
\begin{assumption}
\label{uno}
Given an element $\k \in \cc T_{h}$, there exists a set of nonoverlapping (not necessarily shape-regular) $d$-dimensional simplices $\{\k_\flat^F\}_{F\subset \de\k} \subset \k$, such that, for any face $F\subset \de\k$,
$$
(\mr i) \ \displaystyle  h_\k \lesssim \frac{d |\k_\flat^F | }{|F|}, \qquad %\\[10pt]
(\mr{ii}) \ \displaystyle  \bigcup_{F\subset \de\k} \wl \k_\flat^F \subseteq \wl\k,
$$
where the hidden constant is independent of the discretization parameters, the number of faces of $\k$, the measure of $F$, and the material properties.
\end{assumption}
%\medskip
\begin{remark}[Number of faces and degenerating faces]
\label{remark:faces}
Notice that no restriction is imposed by Assumption \ref{uno} on either the number of faces of an element, or the measure of the face of an element with respect to the diameter of the element itself. Therefore, the case of faces degenerating under mesh refinement can be considered as well, cf.~also \cite{cangiani-book,antonietti-poligoni}. 
\end{remark}
We recall that, under Assumption \ref{uno}, the following \emph{trace-inverse inequality} holds for polytopic elements:
\begin{equation}\label{eq:trace.inv.ineq}
\forall \k \in \cc T_h,\,\forall v \in \mathscr P_p(\k),\quad \|v\|_{L^2(\de\k)} \lesssim p h_\k^{-\nf12} \|v\|_{L^2(\k)},
\end{equation}
where the hidden constant is independent of the discretization parameters, the number of faces per element, and the material properties \cite{cangiani-book}.
%
%\begin{assumption}\label{due}
%For any $\k \in \cc T_h$, we assume that $h_\k^d \ge |\k| \gtrsim h_\k^d$, with $d\in\{2,3\}$.
%\end{assumption}
%%
%\begin{assumption}\label{tre}
%Every polytopic element $\k \in \cc T_h$ admits a sub-triangulation into at most $m_\k \in \bb N$ nonoverlapping, shape-regular simplices $\mathfrak s_i$, $i \in \{1, \dots, m_\k\}$, such that $\wl \k = \cup_{i=1}^{m_\k} \ \wl{\mathfrak s}_i$ and $$|\mathfrak s_i| \gtrsim |\k|\quad\forall i \in \{1,\dots,m_\k\},$$
%where the hidden constant is indeadent of $\k$ and $\cc T_h$.
%\end{assumption}
%
\begin{assumption}
\label{due}
Let $\cc T_h^\sharp = \{\cc K\}$ denote a \emph{covering} of $\Omega = \Oe \cup \Oa$ consisting of shape-regular $d$-dimensional simplices $\cc K$. We assume that, for any $\k \in \cc T_h$, there exists $\cc K \in \cc T_h^\sharp$ such that $\k \subset \cc K$ and
$$\max_{\k \in \cc T_h}\card \left\{ \k^\prime \in \cc T_h : \k^\prime \cap \cc K \neq \emptyset, \ \cc K \in \cc T_h^\sharp\hbox{ such that } \k \subset \cc K \right\} \lesssim 1;$$
and that, for each pair $\k \in \cc T_h$, $\cc K \in \cc T_h^\sharp$ with $\k \subset \cc K$, $$\hbox{diam}(\cc K) \lesssim h_\k,$$
where the hidden constant is independent of the discretization parameters and of the material properties \cite{cangiani2014,cangiani-book}.
\end{assumption}
\begin{assumption}
\label{local_bv}
Let $\k^+,\k^-$ be any two neighboring elements of $\cc T_h$. We assume the following $hp$-\emph{local bounded variation property} for both the meshsize and the polynomial degree:
\begin{equation*}%\label{eq:local_bv}
 h_{\k^+} \lesssim h_{\k^-} \lesssim h_{\k^+}, \quad p_{\k^+} \lesssim p_{\k^-} \lesssim p_{\k^+},
\end{equation*}
where the hidden constant is independent of the discretization parameters, the number of faces per element, and the material properties \cite{perugia,mox-elastodynamics}.
\end{assumption}
\end{subtheorem}
%Also, under Assumptions \ref{due} and \ref{tre}, the following \emph{inverse estimate} holds:
%$$\forall \k \in \cc T_h,\,\forall v \in \mathscr P_p(\k),\quad \|\grad v\|_{L^2(\k)}^2 \lesssim {p^4}h_\k^{-2} \|v\|_{L^2(\k)}^2.$$
%
\section{Semi-discrete problem}\label{sec:semi.disc.problem}
Before stating the dG formulation of the semi-discrete problem we introduce the following average and jump operators \cite{unified-brezzi,arnold-RM}. For sufficiently smooth scalar-, vector- and tensor-valued fields $\psi$, $\bf v$, and $\bf\tau$, we define averages and jumps on an internal face $F \in \cc F_h^{e,\mr i} \cup \cc F_h^{a,\mr i}$, $F \subset  \de\k^+ \cap  \de\k^-$ with $\k^+$ and $\k^-$ any two neighboring elements in $\cc T_h^e$ or $\cc T_h^a$, as follows:
\begin{equation*}
\begin{alignedat}{2}
\jm{\psi} &= \psi^+\bf n^+ + \psi^- \bf n^-,&\qquad \av{\psi} &= \frac{\psi^+ + \psi^-}{2},\\
\jm{\bf v} & = \bf v^+ \otimes \bf n^+ + \bf v^- \otimes \bf n^-,&\qquad \av{\bf v} & = \frac{\bf v^+ + \bf v^-}{2},\\
\jm{\bf\tau} & = \bf\tau^+\bf n^+ + \bf\tau^-\bf n^-,&\qquad \av{\bf \tau} & = \frac{\bf \tau^+ + \bf\tau^-}{2},
\end{alignedat}
\end{equation*}
where $\bf a\otimes \bf b$ denotes the tensor product of $\bf a,\bf b \in \bb R^3$, and $\psi^\pm$, $\bf v^\pm$ and $\bf\tau^\pm$ denote the traces of $\psi$, $\bf v$ and $\bf \tau$ on $F$ taken from the interior of $\k^\pm$, and $\bf n^\pm$ is the outer unit normal vector to $\de\k^\pm$. When considering a {boundary face} $F \in \cc F_h^{e,\mr b} \cup \cc F_h^{a,\mr b}$, we set $\jm{\psi} = \psi\bf n$, $\jm{\bf v} = \bf v \otimes \bf n$, $\jm{\bf\tau} = \bf \tau\bf n$, and $\av{\psi} = \psi$, $\av{\bf v} = \bf v$, $\av{\bf \tau} = \bf\tau$. We also use the shorthand notation 
$$\la  \Phi, \Psi\ra_{\cc F} = \sum_{F\in\cc F} (\Phi, \Psi)_F,
\qquad 
\|\Phi\|_{\cc F} = \la \Phi, \Phi\ra_{\cc F}^{\nf12} $$ % = \left(\sum_{F\in\cc F} (\Phi, \Phi)_F \! \right)^{\nf12} $$ 
for scalar, vector or tensor fields $\Phi$ and $\Psi$ and for a generic collection of faces $\cc F \subset \cc F_{h}$.

The semi-discrete approximation of problem \eqref{weak_form} reads:
find $(\bf u_h, \phi_h)\! \in \! C^2([0,T];\! \bf V_{\! h}^e)  \times C^2([0,T]; \! V_h^a)$ such that, for all $(\bf v_h,\psi_h) \in \bf V_{\! h}^e \times V_h^a$,
\begin{equation}\label{coupled_dG}
\begin{multlined}
%\hspace{-.5cm}
(\rho_e\ddot{\bf u}_h(t),\bf v_h)_{\Omega_e} + (c^{-2}\rho_a\ddot\phi_h(t),\psi_h)_{\Omega_a}  %\\ 
+ (2\rho\z \dot{\bf u}_h(t),\bf v_h)_{\Omega_e} +
 (\rho\z^2\bf u_h(t),\bf v_h)_{\Omega_e}  \\[5pt]
% \hspace{.2cm}
 + \cc A_h^e(\bf u_h(t),\bf v_h)  + \cc A_h^a(\phi_h(t),\psi_h) %\\
+  \cc I_h^e(\dot\phi_h(t), \bf v_h) + \cc I_h^a(\dot{\bf u}_h(t), \psi_h) \\
  = ({\bf f_e}(t), \bf v_h)_{\Omega_e} + (\rho_a f_a(t),\psi_h)_\Oa,
\end{multlined}
\end{equation}
with initial conditions $\left({\bf u_h (0),\dot{\bf u}_h(0), \phi_h(0),\dot{\phi}_h(0)}\right) %=  \left ({\bf u_{0,h}, \bf u_{1,h},\phi_{0,h},\phi_{1,h}} \right) 
\in \bf V_{\! h}^e\times\bf V_{\! h}^e \times V_h^a\times V_h^a$, 
where the bilinear forms $\cc A_h \colon \bf V_{\! h}^e \times \bf V_{\! h}^e \to \bb R$, $\cc A_h^a \colon V_h^a \times V_h^a \to \bb R$,
$\cc I_h^e \colon V_h^a \times \bf V_{\! h}^e \to \bb R$ and $\cc I_h^a \colon \bf V_{\! h}^e \times V_h^a \to \bb R$ are given by
\begin{equation}\label{discr_bilin_forms}
\begin{alignedat}{2}
 \cc A_h^e(\bf u,\bf v) = & \ (\bm\sigma_h(\bf u ),\bm\eps_h(\bf v))_{\Omega_e} - \la \av{\bm\sigma_h(\bf u)}, \jm{\bf v} \ra_{{\cc F}_h^e} %\\  & 
 - \la \jm{\bf u }, \av{\bm\sigma_h(\bf v)}\ra_{{\cc F}_h^e} + \la \eta\jm{\bf u },\jm{\bf v}\ra_{{\cc F}_h^e} %&\ & \forall \bf u, \bf v \in \bf V_{\! h}^e,
 \\[5pt]
 \cc A_h^a(\phi,\psi)  = & \ (\rho_a \grad_h\phi,\grad_h\psi)_{\Omega_a} - \la \av{\rho_a \grad_h \phi},\jm{\psi} \ra_{{\cc F}_h^a} %\\ & 
 -  \la  \jm{\phi},\av{\rho_a\grad_h \psi} \ra_{{\cc F}_h^a} + \la\chi \jm{\phi},\jm{\psi} \ra_{{\cc F}_h^a} %&\ &\forall \phi,\psi\in V_h^a,
 \\[5pt]
 \cc I_h^e(\psi,\bf v) = & \ (\rho_a\psi\bf n_e,\bf v)_{\Gamma_\mr I}, %= &\ & \forall (\psi,\bf v) \in V_h^a \times \bf V_{\! h}^e, \\[5pt]
 \quad  \cc I_h^a(\bf v,\psi) = ({\rho_a \bf v}\dotp{\bf n_a},\psi )_{\Gamma_\mr I} = -\cc I_h^e(\psi,\bf v),
% &\quad& \forall (\bf v,\psi) \in \bf V_{\! h}^e \times V_h^a,
\end{alignedat}
\end{equation}
with $\grad_h$ the usual broken gradient operator on $\cc T_h$.
We point out that the last identity in \eqref{discr_bilin_forms} holds due to the fact that $\bf n_a = - \bf n_e$.
Here we have set, for any integer $l\ge 1$ and any $\bf v \in \bf H^l(\cc T_h^e) \supset \bf V_{\! h}^e$,
$$
\begin{aligned}
\bm\eps_h(\bf v) = \frac{1}{2}\left(\grad_h \bf v + \grad_h \bf v^T\right),\quad \bm\sigma_h(\bf v) = \bb C \bm\eps_h(\bf v).
\end{aligned}
$$
The \emph{stabilization functions} \mbox{$\eta \in L^\infty({\cc F}_h^e)$} and $\chi \in L^\infty({\cc F}_h^a)$ are defined as follows:
\begin{subequations}
%\begin{alignat}{3}
\begin{align}
\eta_{| F} & = \begin{cases}
\displaystyle
 \alpha \max_{\k \in \{\k^+,\k^-\}}
\left( \frac{\wl{\bb C}_\k p^2_{e,\k}}{h_{\k}} \right) &\quad\forall F \in {\cc F}_h^{e,\mr i},
\quad \ F \subseteq \de {\k^+} \cap \de{\k^-}, \\[15pt]
\displaystyle \frac{\wl{\bb C}_\k p^2_{e,\k}}{h_\k} &\quad\forall F \in {\cc F}_h^{e,\mr b},
\quad  F \subseteq \de \k;
\end{cases} \label{stabiliz.a} \\[10pt]
\chi_{| F} & = \begin{cases}
\displaystyle \beta \max_{\k\in \{\k^+,\k^-\}} \left(
\frac{\wl\rho_{a,\k} p_{a,\k}^2}{h_\k}\right) &\ \, \forall F \in {\cc F}_h^{a,\mr i},
\quad \, F \subseteq \de {\k^+} \cap \de{\k^-}, \\[15pt]
\displaystyle \frac{\wl\rho_{a,\k} p_{a,\k}^2}{h_\k} &\ \, \forall F \in {\cc F}_h^{a,\mr b},
\quad  F \subseteq \de \k.
\end{cases} \label{stabiliz.b}
\end{align}
\end{subequations}
where $\alpha,\beta > 0$ are positive constants to be properly chosen.
We now introduce the following norms:
\begin{equation}\label{norme}
\begin{alignedat}{2}
\|\bf v\|_{\dG,e}^2 & = \|\bb C^{\nf{1}{2}}\bm\eps_h(\bf v)\|_{\Omega_e}^2 + \|{\eta^{\nf{1}{2}}\jm{\bf v}}\|_{{\cc F}_h^e}^2
&\quad& \forall \bf v \in \bf H^1(\cc T_h^e) \supset \bf V_{\! h}^e,\\
\vvvert \bf v\vvvert_{\dG,e}^2 & = \|\bf v\|_\dGe^2 + \|{\eta^{-\nf{1}{2}}\av{\bb C\bm\eps_h(\bf v)}}\|_{{\cc F}_h^e}^2 &\quad& \forall \bf v \in \bf H^2(\cc T_h^e), \\
\|\psi\|_{\dG,a}^2 & = \|\rho_a^{\nf{1}{2}}\grad_h\psi\|_{\Oa}^2 + \|{\chi^{\nf{1}{2}} \jm{\psi}}\|_{{\cc F}_h^a}^2 &\quad& \forall \psi \in H^1(\cc T_h^a) \supset V_h^a,\\
\vvvert \psi\vvvert_{\dGa}^2 & = \|\psi\|_\dGa^2 + \|{\chi^{-\nf{1}{2}}\av{\rho_a \grad_h\psi}}\|_{{\cc F}_h^a}^2 &\quad& \forall \psi \in H^2(\cc T_h^a).
\end{alignedat}
\end{equation}
The following result follows based on employing standard arguments.
\begin{lemma}[Coercivity and boundedness of $\cc A_h^e$ and $\cc A_h^a$]
\label{coerc_bound}
Provided that $\cc T_h$ satisfies Assumption \ref{uno}, and that constants $\alpha$ and $\beta$ in \eqref{stabiliz.a}--\eqref{stabiliz.b} are chosen sufficiently large, the following continuity and coercivity bounds hold:
\begin{subequations}
\begin{equation}\label{u_cont_coerc}
\begin{aligned}
\cc A_h^e(\bf u, \bf v) & \lesssim \|\bf u\|_{\dG,e}\|\bf v\|_{\dG,e}&\quad&\forall \bf u,\bf v \in \bf V_{\! h}^e, \\
\cc A_h^e(\bf v,\bf v) & \gtrsim \|\bf v\|_{\dG,e}^2&\quad&\forall \bf v \in \bf V_{\! h}^e,
\end{aligned}
\end{equation}% \\[5pt]
\begin{equation}\label{phi_cont_coerc}
\begin{aligned}
\cc A_h^a(\phi, \psi) & \lesssim \|\phi\|_{\dG,a}\|\psi\|_{\dG,a}&\quad&\forall \phi,\psi \in V_h^a, 
 \\
\cc A_h^a(\psi,\psi) & \gtrsim \|\psi\|_{\dG,a}^2&\quad&\forall \psi \in  V_h^a.
\end{aligned}
\end{equation}
\end{subequations}
Moreover,
\begin{equation}\label{cont_estesa}
%\begin{alignedat}{3}
%&\qquad\qquad\quad \ \, &\cc A_h^e(\bf w, \bf v) &  \lesssim \tpn{\bf w}_{\dG,e}\|\bf v\|_{\dG,e}&\quad& \ \ \forall (\bf w,\bf v) \in \bf H^2(\cc T_h^e) \times \bf V_{\! h}^e, \\
%&\qquad\qquad\quad \ \, &\cc A_h^a(\phi, \psi)  & \lesssim \tpn{\phi}_{\dG,a}\|\psi\|_{\dG,a}&\quad&\ \ \forall (\phi,\psi) \in H^2(\cc T_h^a) \times V_h^a.
%\end{alignedat}
\begin{aligned}
\cc A_h^e(\bf w, \bf v) &  \lesssim \tpn{\bf w}_{\dG,e}\|\bf v\|_{\dG,e}&\quad&\forall (\bf w,\bf v) \in \bf H^2(\cc T_h^e) \times \bf V_{\! h}^e, \\
\cc A_h^a(\phi, \psi)  & \lesssim \tpn{\phi}_{\dG,a}\|\psi\|_{\dG,a}&\quad&\forall (\phi,\psi) \in H^2(\cc T_h^a) \times V_h^a.
\end{aligned}
\end{equation}
\end{lemma}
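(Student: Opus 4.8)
The plan is to treat $\cc A_h^e$ and $\cc A_h^a$ in parallel, since both are symmetric interior-penalty forms of identical structure, and to reduce everything to one ingredient: control of the consistency (flux) terms by the volume terms. Concretely, the workhorse estimate will be
\[
\|\eta^{-1/2}\av{\bb C\bm\eps_h(\bf v)}\|_{\cc F_h^e}^2 \lesssim \alpha^{-1}\|\bb C^{1/2}\bm\eps_h(\bf v)\|_{\Omega_e}^2 \qquad \forall\,\bf v\in\bf V_{\! h}^e,
\]
with hidden constant independent of $\alpha$, and its acoustic analogue with $\rho_a$, $\grad_h$, $\chi$, $\beta$. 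To prove it, on a face $F\subseteq\de\k^+\cap\de\k^-$ one uses $\eta_{|F}\ge\alpha\,\wl{\bb C}_\k p_{e,\k}^2 h_\k^{-1}$ for $\k\in\{\k^+,\k^-\}$ from \eqref{stabiliz.a}, expands the average, and applies the trace--inverse inequality \eqref{eq:trace.inv.ineq} (valid by Assumption~\ref{uno}) component-wise to $\bb C\bm\eps_h(\bf v)$ on each element; summing over faces recombines the face pieces into full element boundaries. The factors $p_{e,\k}^2 h_\k^{-1}$ cancel, leaving $\sum_\k \wl{\bb C}_\k^{-1}\|\bb C\bm\eps_h(\bf v)\|_{\k}^2$, and since $\wl{\bb C}_\k = |\bb C^{1/2}|_{2}^2$ on $\k$ one has $\|\bb C\bm\eps_h(\bf v)\|_{\k}^2\le\wl{\bb C}_\k\|\bb C^{1/2}\bm\eps_h(\bf v)\|_{\k}^2$, which closes the estimate.

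Given this, boundedness on $\bf V_{\! h}^e$ (first line of \eqref{u_cont_coerc}) follows by Cauchy--Schwarz term by term: the volume term by $\|\bb C^{1/2}\bm\eps_h(\bf u)\|_{\Omega_e}\|\bb C^{1/2}\bm\eps_h(\bf v)\|_{\Omega_e}$, the penalty term by $\|\eta^{1/2}\jm{\bf u}\|_{\cc F_h^e}\|\eta^{1/2}\jm{\bf v}\|_{\cc F_h^e}$, and each consistency term by inserting $\eta$, e.g. $\la\av{\bm\sigma_h(\bf u)},\jm{\bf v}\ra_{\cc F_h^e}\le\|\eta^{-1/2}\av{\bm\sigma_h(\bf u)}\|_{\cc F_h^e}\|\eta^{1/2}\jm{\bf v}\|_{\cc F_h^e}$ and using the workhorse estimate; all terms are then $\lesssim\|\bf u\|_{\dG,e}\|\bf v\|_{\dG,e}$. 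For coercivity, taking $\bf v=\bf u$ and using symmetry gives $\cc A_h^e(\bf v,\bf v)=\|\bb C^{1/2}\bm\eps_h(\bf v)\|_{\Omega_e}^2-2\la\av{\bm\sigma_h(\bf v)},\jm{\bf v}\ra_{\cc F_h^e}+\|\eta^{1/2}\jm{\bf v}\|_{\cc F_h^e}^2$; bounding the cross term by Young's inequality and the workhorse estimate,
\[
2\la\av{\bm\sigma_h(\bf v)},\jm{\bf v}\ra_{\cc F_h^e}\le\tfrac12\|\eta^{1/2}\jm{\bf v}\|_{\cc F_h^e}^2+2\|\eta^{-1/2}\av{\bm\sigma_h(\bf v)}\|_{\cc F_h^e}^2\le\tfrac12\|\eta^{1/2}\jm{\bf v}\|_{\cc F_h^e}^2+\tfrac{C}{\alpha}\|\bb C^{1/2}\bm\eps_h(\bf v)\|_{\Omega_e}^2,
\]
so $\cc A_h^e(\bf v,\bf v)\ge(1-C/\alpha)\|\bb C^{1/2}\bm\eps_h(\bf v)\|_{\Omega_e}^2+\tfrac12\|\eta^{1/2}\jm{\bf v}\|_{\cc F_h^e}^2\gtrsim\|\bf v\|_{\dG,e}^2$ once $\alpha$ exceeds the ($\alpha$-independent) constant $C$. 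The same computation verbatim, with $\rho_a$, $\grad_h$, $\chi$, $\beta$ replacing $\bb C$, $\bm\eps_h$, $\eta$, $\alpha$, yields \eqref{phi_cont_coerc}, the choice of $\beta$ being analogous.

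For the extended continuity \eqref{cont_estesa} the only new point is that the first argument $\bf w\in\bf H^2(\cc T_h^e)$ is not a polynomial, so the trace--inverse inequality is unavailable for it. I would again split $\cc A_h^e(\bf w,\bf v)$ into its four terms: the volume and penalty terms are bounded by $\|\bf w\|_{\dG,e}\|\bf v\|_{\dG,e}$; in $\la\jm{\bf w},\av{\bm\sigma_h(\bf v)}\ra_{\cc F_h^e}$ the inverse inequality is applied to the discrete function $\bf v$ exactly as before; and in $\la\av{\bm\sigma_h(\bf w)},\jm{\bf v}\ra_{\cc F_h^e}\le\|\eta^{-1/2}\av{\bb C\bm\eps_h(\bf w)}\|_{\cc F_h^e}\|\eta^{1/2}\jm{\bf v}\|_{\cc F_h^e}$ the first factor is precisely the extra contribution in $\tpn{\bf w}_{\dG,e}$ by \eqref{norme}. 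Since $\|\bf w\|_{\dG,e}\le\tpn{\bf w}_{\dG,e}$, all four terms are $\lesssim\tpn{\bf w}_{\dG,e}\|\bf v\|_{\dG,e}$, and the acoustic bound follows identically. I expect no serious obstacle: the one point requiring care is the bookkeeping of the dependence on $\alpha$, $\beta$ and on the material and discretization parameters in the workhorse estimate, so that the smallness threshold on $\alpha$, $\beta$ is genuinely independent of $h$, $\bf p$, and the number of faces per element — which is exactly what Assumption~\ref{uno} and inequality \eqref{eq:trace.inv.ineq} guarantee.
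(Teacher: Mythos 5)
Your proposal is correct and follows exactly the route the paper intends: the ``workhorse estimate'' you isolate is precisely Lemma~\ref{lemma_stab} in the appendix (proved there from Assumption~\ref{uno} and the simplex trace--inverse inequality), and the paper explicitly states that the remaining steps are the standard Cauchy--Schwarz/Young arguments you carry out. No discrepancies.
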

As a consequence of \eqref{u_cont_coerc}--\eqref{phi_cont_coerc}, whose proof hinges on Lemma \ref{lemma_stab}, the theory of ordinary differential equations guarantees that problem \eqref{coupled_dG} is well-posed (notice, also, that the coupling terms stemming from bilinear forms $\cc I_h^e$ and $\cc I_h^a$ do not contribute to the energy of the system, cf.~Remark~\ref{energy_norm_remark} below).
\section{Stability of the semi-discrete formulation}\label{sec:stability}
In this section we prove a stability result for the semi-discrete problem \eqref{coupled_dG} (see \cite{antonietti-mazzieri-polyhedra,mox-elastodynamics,ayuso,antonietti-ferroni-mazzieri-quarteroni} for the purely elastic case). Let $\bf W = (\bf v,\psi) \in C^1([0,T];\bf V_{\! h}^e) \times C^1([0,T];V_h^a)$;
we introduce the following mesh-dependent \emph{energy norm}
%\footnote{
%We employ here the term \emph{norm} and the notation $\|{\cdot}\|_\E$, $\|{\cdot}\|_{\E_e}$, $\|{\cdot}\|_{\E_a}$ with a slight abuse. The quantities defined by \eqref{energy_norm}--\eqref{energies} are not, indeed, norms on $C^2([0,T];\bf V_{\! h}^e) \times C^2([0,T];V_h^a)$, $C^2([0,T];\bf V_{\! h}^e)$, and $C^2([0,T];V_h^a)$, respectively. This is thus to be understood as a shorthand notation.
%}:
\begin{equation}\label{energy_norm}
\|\W(t)\|_\E^2 = \| \bf v(t)\|_{\E_e}^2 + \| \psi(t)\|_{\E_a}^2,
\end{equation}
where
\begin{equation}\label{energies}
\begin{aligned}
\| \bf v(t)\|_{\E_e}^2 &= 
%\begin{dcases}
\| \rho_e^{\nf{1}{2}}\dot{\bf v}(t)\|_\Oe^2 + \|\rho_e^{\nf{1}{2}}\z \bf v(t)\|_\Oe^2 + \|\bf v(t)\|_\dGe^2, 
%& t\in (0,T],\\
%\|\rho_e^{\nf{1}{2}}\bf u_{1,h}\|_\Oe^2 + \|\rho_e^{\nf{1}{2}}\z \bf u_{0,h}\|_\Oe^2 + \| \bf u_{0,h}\|_\dGe^2, & t = 0,
%\end{dcases}
\\[5pt] 
%\end{equation}
%and
%\begin{equation}
\| \psi(t)\|_{\E_a}^2 &=
%\begin{dcases}
\|c^{-1}\rho_a^{\nf{1}{2}}
\dot\psi(t)\|_\Oa^2 + \|\psi(t)\|_\dGa^2.
%, & t\in (0,T], \\
%\| c^{-1}\rho_a^{\nf{1}{2}} \phi_{1,H}\|_\Oa^2 + \|\phi_{0,H}\|_\dGa^2, & t = 0.
%\end{dcases}
\end{aligned}
\end{equation}
\begin{remark}[Energy norm]
\label{energy_norm_remark}
The definition of the energy norm does not take into account the interface terms. The reason is related to the fact that, as observed previously, 
the bilinear forms $\cc I_h^e$ and $\cc I_h^a$ are skew-symmetric, i.e., $\cc I_h^a(\bf v,\psi) = -\cc I_h^e(\psi,\bf v)$ for all $(\bf v, \psi) \in \bf V_{\! h}^e \times V_h^a$.
\end{remark}
\begin{theorem}[Stability of the semi-discrete formulation]\label{stability}
Let $\bf U_h = (\bf u_h,\phi_h)$ be the solution of \eqref{coupled_dG}. For sufficiently large penalty parameters $\alpha$ and $\beta$ in \eqref{stabiliz.a} and \eqref{stabiliz.b}, respectively, the following bound holds:
% provided ${\bf f_e} \in L^2(0,T;\bf L^2(\Oe))$,
\begin{equation}\label{eq:stability}
\|\U_h(t)\|_\E \lesssim \|\U_h(0)\|_\E + \int_0^t \left( \|{\bf f_e}(\tau)\|_\Oe + \|f_a(\tau)\|_\Oa \right)\,\d\tau,\quad t \in (0,T].
\end{equation}
\end{theorem}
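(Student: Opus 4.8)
The plan is to run the classical energy argument. First I would test the semi-discrete weak formulation \eqref{coupled_dG} with the velocity fields $\bf v_h = \dot{\bf u}_h(t) \in \bf V_{\! h}^e$ and $\psi_h = \dot\phi_h(t) \in V_h^a$; this is admissible because $\bf u_h \in C^2([0,T];\bf V_{\! h}^e)$ and $\phi_h \in C^2([0,T];V_h^a)$. Using the symmetry of $\cc A_h^e$ and $\cc A_h^a$ and the time-independence of $\rho_e$ and $\z$, every term on the left-hand side becomes an exact time derivative except for two: the viscous term $(2\rho_e\z\dot{\bf u}_h,\dot{\bf u}_h)_\Oe = 2\|(\rho_e\z)^{\nf12}\dot{\bf u}_h\|_\Oe^2 \ge 0$, which is dissipative, and the pair of coupling terms, which cancel identically since $\cc I_h^e(\dot\phi_h,\dot{\bf u}_h) + \cc I_h^a(\dot{\bf u}_h,\dot\phi_h) = 0$ by the skew-symmetry recorded in \eqref{discr_bilin_forms} (cf.~Remark~\ref{energy_norm_remark}). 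Introducing the modified energy
$$\cc E(t) = \|\rho_e^{\nf12}\dot{\bf u}_h(t)\|_\Oe^2 + \|c^{-1}\rho_a^{\nf12}\dot\phi_h(t)\|_\Oa^2 + \|\rho_e^{\nf12}\z\,\bf u_h(t)\|_\Oe^2 + \cc A_h^e(\bf u_h(t),\bf u_h(t)) + \cc A_h^a(\phi_h(t),\phi_h(t)),$$
the above yields the identity $\tfrac12\tfrac{\d}{\d t}\cc E(t) + 2\|(\rho_e\z)^{\nf12}\dot{\bf u}_h(t)\|_\Oe^2 = ({\bf f_e}(t),\dot{\bf u}_h(t))_\Oe + (\rho_a f_a(t),\dot\phi_h(t))_\Oa$.

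Next I would invoke Lemma~\ref{coerc_bound}: for $\alpha$ and $\beta$ chosen large enough, coercivity and boundedness of $\cc A_h^e$ and $\cc A_h^a$ give $\cc A_h^e(\bf u_h,\bf u_h)\simeq\|\bf u_h\|_\dGe^2$ and $\cc A_h^a(\phi_h,\phi_h)\simeq\|\phi_h\|_\dGa^2$, hence $\cc E(t)\simeq\|\U_h(t)\|_\E^2$ with constants independent of the discretization parameters. Discarding the non-negative viscous contribution, bounding the right-hand side by Cauchy--Schwarz, and using $\rho_e\ge\rho_e^->0$, $\rho_a\ge\rho_a^->0$ and $c>0$ (so that $\|\dot{\bf u}_h\|_\Oe\lesssim\cc E^{\nf12}$ and $\|\dot\phi_h\|_\Oa\lesssim\cc E^{\nf12}$), one obtains the quadratic differential inequality $\tfrac{\d}{\d t}\cc E(t)\lesssim\big(\|{\bf f_e}(t)\|_\Oe+\|f_a(t)\|_\Oa\big)\,\cc E(t)^{\nf12}$.

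Finally, since the factor multiplying $\cc E^{\nf12}$ is non-negative, the standard device of replacing $\cc E$ by $\cc E+\delta$ with $\delta>0$, dividing, and letting $\delta\to0^+$ turns this into $\tfrac{\d}{\d t}\cc E(t)^{\nf12}\lesssim\|{\bf f_e}(t)\|_\Oe+\|f_a(t)\|_\Oa$; integrating on $(0,t)$ and again using $\cc E^{\nf12}\simeq\|\U_h\|_\E$ delivers \eqref{eq:stability}. The main obstacle is really the careful bookkeeping in the first step that identifies all non-dissipative left-hand terms as total derivatives — in particular the exact cancellation of the interface terms, which is the structural reason the energy norm \eqref{energy_norm} carries no interface contribution — together with the (routine) passage from the quadratic to the linear differential inequality in the last step.
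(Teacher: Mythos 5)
Your proof is correct and follows essentially the same route as the paper: testing with $(\dot{\bf u}_h,\dot\phi_h)$, observing that the interface terms cancel by skew-symmetry and the viscous term is dissipative, and identifying a modified energy that is equivalent to $\|\U_h\|_\E^2$ for $\alpha,\beta$ large enough (your appeal to Lemma~\ref{coerc_bound} is interchangeable with the paper's Lemma~\ref{bounds}, since $\cc A_h^e(\bf u_h,\bf u_h)=\|\bf u_h\|_\dGe^2-2\la\av{\bm\sigma_h(\bf u_h)},\jm{\bf u_h}\ra_{{\cc F}_h^e}$ and likewise for the acoustic part). The only difference is at the very end, where you divide the differential inequality by $\cc E^{1/2}$ and integrate directly instead of integrating first and invoking the Gronwall lemma as the paper does; both are standard and equivalent here.
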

\begin{proof}
Taking $\bf v_h = \dot{\bf u}_h$ and $\psi_h = \dot\phi_h$ in \eqref{coupled_dG}, we obtain
\begin{multline*}
(\rho_e\ddot{\bf u}_h,\dot{\bf u}_h)_\Oe + (2\rho_e\z\dot{\bf u}_h,\dot{\bf u}_h)_\Oe 
+ (\rho_e\z^2\bf u_h,\dot{\bf u}_h)_\Oe
+ \big(\bm\sigma_h(\bf u_h),\bm\eps_h(\dot{\bf u}_h)\big)_\Oe \\ - \la \av{\bm\sigma_h(\bf u_h)}, \jm{\dot{\bf u}_h} \ra_{{\cc F}_h^e} - \la \jm{\bf u_h}, \av{\bm\sigma_h(\dot{\bf u}_h)}\ra_{{\cc F}_h^e} + \la \eta\jm{\bf u_h},\jm{\dot{\bf u}_h}\ra_{{\cc F}_h^e} 
\\
+ (c^{-2}\rho_a\ddot\phi_h,\dot\phi_h)_{\Omega_a} + (\rho_a\grad_h\phi_h,\grad_h\dot\phi_h)_\Oa 
 - \la\av{\rho_a \grad_h \phi_h},\jm{\dot\phi_h}\ra_{{\cc F}_h^a} 
 \\
 - \la \rho_a \jm{\phi_h},\av{\grad_h\dot\phi_h}\ra_{{\cc F}_h^a} + \la \chi\jm{\phi_h},\jm{\dot\phi_h}\ra_{{\cc F}_h^a}
  = ({\bf f_e},\dot{\bf u}_h)_\Oe + (\rho_a f_a,\dot \phi_h)_\Oa,
\end{multline*}
that is,
\begin{multline*}
\frac{1}{2}
\frac{\d}{\d t}
\left(\|\U_h\|_\E^2 - 2\left( 
\la \av{\bm\sigma_h(\bf u_h)},\jm{\bf u_h}\ra_{{\cc F}_h^e} + \la\av{\rho_a \grad_h \phi_h},\jm{\phi_h}\ra_{{\cc F}_h^a}
\right)
\right) \\
+ 2\|\rho_e^{\nf{1}{2}}\z^{\nf{1}{2}}\dot{\bf u}_h\|_\Oe^2 = ({\bf f_e},\dot{\bf u}_h)_\Oe + (\rho_a {f_a},\dot{\phi}_h)_\Oa.
\end{multline*}
Integrating the above identity over the interval $(0,t)$ we have
\begin{multline*}
\hspace{-.3cm}
\|\U_h(t)\|_\E^2 - 2\left( \la \av{\bm\sigma_h(\bf u_h(t))},\jm{\bf u_h(t)}\ra_{{\cc F}_h^e} + \la\av{\rho_a\grad_h\phi_h(t)},\jm{\phi_h(t)}\ra_{{\cc F}_h^a}\right) \\
+ \,4 \! \int_0^t \|\rho_e^{\nf{1}{2}} \z^{\nf{1}{2}} \dot{\bf u}_h(\tau)\|_\Oe^2\d\tau 
=  \|\U_h(0)\|_\E^2 - 2\left( \la \av{\bm\sigma_h(\bf u_h(0))},\jm{\bf u_h(0)}\ra_{{\cc F}_h^e} \right. \\  + \left. \la\av{\rho_a\grad_h\phi_h(0)},\jm{\phi_h(0)}\ra_{{\cc F}_h^a}\right) %\\ 
+ 2 \int_0^t \! \big({\bf f_e}(\tau),\dot{\bf u}_h(\tau)\big)_\Oe\d\tau
+2 \int_0^t \! \big(\rho_a {f_a}(\tau),\dot{\phi}_h(\tau)\big)_\Oa\d\tau,
\end{multline*}
and, since the last term on the left-hand side is positive, we get
\begin{multline*}
\|\U_h(t)\|_\E^2 - 2\left( \la \av{\bm\sigma_h(\bf u_h(t))},\jm{\bf u_h(t)}\ra_{{\cc F}_h^e} + \la\av{\rho_a\grad_h\phi_h(t)},\jm{\phi_h(t)}\ra_{{\cc F}_h^a}\right)
\\
 \le \|\U_h(0)\|_\E^2 - 2\left( \la \av{\bm\sigma_h(\bf u_h(0))},\jm{\bf u_h(0)}\ra_{{\cc F}_h^e} + \la\av{\rho_a\grad_h\phi_h(0)},\jm{\phi_h(0)}\ra_{{\cc F}_h^a}\right) \\
+ 2 \int_0^t \! \big({\bf f_e}(\tau),\dot{\bf u}_h(\tau)\big)_\Oe\,\d\tau
+ 2 \int_0^t \! \big(\rho_a { f_a}(\tau),\dot{\phi}_h(\tau)\big)_\Oa\,\d\tau.
\end{multline*}
From Lemma \ref{bounds} in the Appendix, we get
\begin{equation*}
\begin{aligned}
\|\U_h(t)\|_\E^2 - 2\left( \la \av{\bm\sigma_h(\bf u_h(t))},\jm{\bf u_h(t)}\ra_{{\cc F}_h^e} + \la\av{\rho_a\grad_h\phi_h(t)},\jm{\phi_h(t)}\ra_{{\cc F}_h^a}\right) & \gtrsim \|\U_h(t)\|_\E^2,\\
\|\U_h(0)\|_\E^2 - 2\left( \la \av{\bm\sigma_h(\bf u_h(0))},\jm{\bf u_h(0)}\ra_{{\cc F}_h^e} 
+\,\la\av{\rho_a\grad_h\phi_h(0)},\jm{\phi_h(0)}\ra_{{\cc F}_h^a}\right) & \lesssim \|\U_h(0)\|_\E^2,
\end{aligned}
\end{equation*}
where the first bound holds if the stability parameters $\alpha$ and $\beta$ are chosen large enough. Consequently
$$
\begin{aligned}\|\U_h(t)\|_\E^2 & \lesssim \|\U_h(0)\|_\E^2 + 2 \int_0^t \big({\bf f_e}(\tau),\dot{\bf u}_h(\tau)\big)_\Oe\,\d\tau + 2 \int_0^t \big(\rho_a {f_a}(\tau),\dot{\phi}_h(\tau)\big)_\Oa\,\d\tau \\
&\lesssim \|\U_h(0)\|_\E^2  + \int_0^t \|{\bf f_e}(\tau)\|_\Oe \|\rho_e^{\nf{1}{2}}\dot{\bf u}_h(\tau)\|_\Oe 
 + \int_0^t \|{f_a}(\tau)\|_\Oa \|c^{-1}\rho_a^{\nf{1}{2}}\dot{\phi}_h(\tau)\|_\Oa  \\
&\lesssim \|\U_h(0)\|_\E^2 + 
\int_0^t \left( \|{\bf f_e}(\tau)\|_\Oe + \|f_a(\tau)\|_\Oa \right)\|\U_h(\tau)\|_\E\,\d\tau,
\end{aligned}
$$
where we have used the Cauchy--Schwarz inequality and the definition \eqref{energy_norm} of the energy norm in the last two bounds.
The assertion follows then by employing Gronwall's Lemma (see e.g.~\cite[p.~28]{quarteroni}).
\end{proof}
\section{Semi-discrete error estimate}\label{sec:error.estimates}
The main subject of this section is the derivation of an \emph{a priori} error estimate for the  semi-discrete coupled problem \eqref{coupled_dG}. 
%We first introduce the following \emph{augmented energy norms} (cf.~\eqref{energies}): for $(\bf u(t),\dot{\bf u}(t))\in \bf H^1(\cc T_h^e)\times \bf H^1(\cc T_h^e)$ and $(\phi(t),\dot\phi(t))\in H^1(\cc T_h^a) \times H^1(\cc T_h^a)$,
%\begin{subequations}
%\begin{align}
%\vvvert{\bf u(t)}\vvvert_{\E_e}^2 & =  \|\bf u(t)\|_{\E_e}^2 + \! \sum_{F\in\cc F_{h,\mr I}}
%\frac{h_\k}{p_{e,\k}^2}  \|\rho_a^{\nf12}\dot{\bf u}(t) \|_F^2, \label{augmented_U}
%\\
%\vvvert{\phi(t)}\vvvert_{\E_a}^2 & =  \|\phi(t)\|_{\E_a}^2 + \! \sum_{
%F\in\cc F_{h,\mr I}}
%\frac{h_\k}{p_{a,\k}^2} \|\rho_a^{\nf12}\dot{\phi}(t)\|_F^2. \label{augmented_Phi}
%\end{align}
%\end{subequations}
%%
%\begin{remark}[Equivalence of energy norms]
%\label{equivalenza_norme}
%Using the trace-inverse inequality \eqref{eq:trace.inv.ineq}, it is easy to see that the augmented energy norms $\vvvert{\cdot}\vvvert_{\E_e}$ and $\vvvert{\cdot}\vvvert_{\E_a}$ are \emph{equivalent} to the energy norms $\|{\cdot}\|_{\E_e}$ and $\|{\cdot}\|_{\E_a}$ on the discrete spaces $\bf V_{\! h}^e$ and $V_h$, respectively.
%\end{remark}

For an open bounded polytopic domain $D \subset \bb R^d$, and a generic polytopic mesh $\cc T_h$ over $D$, we introduce, for any $\k \in \cc T_h$ and $m \in \bb N_0$, the \emph{extension operator} ${\mathscr E}\colon H^m(\k) \to  H^m(\bb R^d)$ such that $\mathscr E v_{| \k} = v$, $\|\mathscr E v\|_{H^m(\bb R^d)} \le C\|v\|_{H^m(\k)}$, with $C>0$ depending only on $m$ and $\k$. The corresponding vector-valued version, mapping $\bf H^m(\k)$ onto $\bf H^m(\bb R^d)$, acts component-wise and will be denoted in the same way. 
%We recall the following result (see, e.g., \cite{cangiani-book}).
%%
%\begin{lemma}[Interpolation estimates I]
%\label{cangiani_approx} 
%Suppose that Assumptions \ref{uno} and \ref{due} hold for $\mf T_h$. Let {$v_{| \k} \in H^m(\k)$}, $m > 1 + d/2$, such that $\mathscr E  v_{| \cc K} \in H^m(\cc K)$ for any $\k \in \mf T_h$, with $\k \subset \cc K$, $\cc K\in \mf T_h^\sharp$. Then there exist a \emph{projection operator} ${\Pi}\colon L^2(D) \to \mathscr P_{\bf p}(\mf T_h)$ and a real number $C>0$ independent of the discretization parameters and the number of faces per element such that
%\begin{subequations}
%\begin{align}
%\| v - {\Pi} v\|_{r,\k} & \le C \frac{h_\k^{\min(m,p_\k+1)-r}}{p_\k^{m-r}}
%\| \mathscr E v\|_{m,\cc K} \quad\forall r \in \{0,\dots,m\}, \label{H^r-norm}\\
%%
% \| v - { \Pi} v\|_{\de\k} &\le C \frac{h_\k^{\min(m,p_\k+1)-\nf12}}{p_\k^{m-\nf12}}\| \mathscr E v\|_{m,\cc K}, \label{L^2-b-norm}\\
%%
%\|\grad( v - { \Pi} v)\|_{\de\k} & \le C \frac{h_\k^{\min(m,p_\k+1)-\nf32}}{p_\k^{m-\nf32}}\| \mathscr E v\|_{m,\cc K}. \label{H^1-b-norm}
%\end{align}
%\end{subequations}
%\end{lemma}
%%
%The vector-valued version $\bf\Pi$ of the projection operator, mapping $\bf L^2(D)$ onto $\mathscr P_\bf p(\mf T_h)^d$, acts component-wise. In the following, we denote $\bf v_I = {\bf\Pi}\bf v \in \bf V_{\! h}^e$, for $D=\Oe$ and $\mf T_h = \cc T_h^e$, and $\psi_I = \Pi \psi \in V_h^a$, for $D=\Oa$ and $\mf T_h = \cc T_h^a$.
The result below is a consequence of the $hp$-approximation properties stated in \cite[Lemmas 23 and 33]{cangiani-book} and of Assumption \ref{local_bv} on local bounded variation.
\begin{lemma}[Interpolation estimates]
\label{interpolation_estimates}
For any pair of functions $(\bf v,\psi) \in \bf H^m(\cc T_h^e) \times H^n(\cc T_h^a)$, $m\ge 2$, $n\ge 2$, there exists a pair of interpolants $(\bf v_I,\psi_I)\in \bf V_{\! h}^e\times V_h^a$ such that
$$
\begin{aligned}
\vvvert{\bf v - \bf v_I}\vvvert_\dGe^2 &\lesssim \sum_{\k\in\cc T_h^e} \frac{h_\k^{2\min(m,p_{e,\k}+1)-2}}{p_{e,\k}^{2m-3}}
\| \mathscr E\bf v\|^2_{m,\cc K}, \\
\vvvert{\psi - \psi_I}\vvvert_\dGa^2 &\lesssim \sum_{\k\in\cc T_h^a} \frac{h_\k^{2\min(n,p_{a,\k}+1)-2}}{p_{a,\k}^{2n-3}}\| \mathscr E\psi\|^2_{n,\cc K}.
\end{aligned}
$$
Additionally, if $({\bf v},\psi) \in C^1([0,T];\bf H^m(\cc T_h^e)) \times C^1([0,T];H^n(\cc T_h^a))$, $m \ge 2$, $n \ge 2$, then
$$
\begin{aligned}
\|\bf v - \bf v_I\|_{\E_e}^2  &\lesssim \sum_{\k\in\cc T_h^e} \! \frac{h_\k^{2\min(m,p_{e,\k}+1)-2}}{p_{e,\k}^{2m-3}}\left(
\|\mathscr E \dot{\bf v}\|_{m,\cc K}^2+\|\mathscr E \bf v\|_{m,\cc K}^2\right), \\
\|\psi - \psi_I\|_{\E_a}^2 &\lesssim \sum_{\k\in\cc T_h^a} \! \frac{h_\k^{2\min(n,p_{a,\k}+1)-2}}{p_{a,\k}^{2n-3}}\left(
\|\mathscr E \dot{\psi}\|_{n,\cc K}^2+\|\mathscr E \psi\|_{n,\cc K}^2\right).
\end{aligned}
$$
\end{lemma}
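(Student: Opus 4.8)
The plan is to build the interpolants $\bf v_I$ and $\psi_I$ element-by-element by combining the polynomial approximation operators furnished by \cite[Lemmas 23 and 33]{cangiani-book} on the covering simplices $\cc K \supset \k$ of Assumption~\ref{due}, and then to estimate each term appearing in the triple-bar norms \eqref{norme} separately. First I would recall that, for each $\k$, the $hp$-optimal approximation result on the simplex $\cc K$ (applied to the extension $\mathscr E\bf v$) yields a polynomial $\bf v_I|_\k \in [\mathscr P_{p_{e,\k}}(\k)]^d$ with
\[
\|\mathscr E\bf v - \bf v_I\|_{H^q(\k)}^2 \lesssim \frac{h_\k^{2\min(m,p_{e,\k}+1)-2q}}{p_{e,\k}^{2(m-q)}}\|\mathscr E\bf v\|_{H^m(\cc K)}^2, \qquad 0 \le q \le m,
\]
and similarly for $\psi$ on $\cc T_h^a$; here the constant is independent of $h_\k$, $p_{e,\k}$, and — crucially, via Assumption~\ref{due} — of the number of faces of $\k$. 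These local polynomials are glued into global piecewise-polynomial functions $\bf v_I \in \bf V_h^e$ and $\psi_I \in V_h^a$ (no inter-element matching is needed since the dG spaces are fully discontinuous).

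Next I would expand $\tpn{\bf v - \bf v_I}_{\dG,e}^2$ according to \eqref{norme}, namely as the sum of the bulk term $\|\bb C^{1/2}\bm\eps_h(\bf v - \bf v_I)\|_{\Omega_e}^2$, the jump term $\|\eta^{1/2}\jm{\bf v - \bf v_I}\|_{\cc F_h^e}^2$, and the flux term $\|\eta^{-1/2}\av{\bb C\bm\eps_h(\bf v - \bf v_I)}\|_{\cc F_h^e}^2$. The bulk term is handled directly by the $q=1$ local estimate summed over $\k \in \cc T_h^e$, using boundedness of $\bb C$. For the two face terms I would reduce each face integral to element-boundary integrals $\|\cdot\|_{L^2(\de\k)}^2$, invoke the continuous trace inequality $\|w\|_{L^2(\de\k)}^2 \lesssim h_\k^{-1}\|w\|_{L^2(\k)}^2 + h_\k\|\grad w\|_{L^2(\k)}^2$ valid on the simplex $\cc K$ (or directly the polynomial trace-inverse inequality \eqref{eq:trace.inv.ineq} where applicable), insert the definitions \eqref{stabiliz.a}--\eqref{stabiliz.b} of $\eta$ (so that $\eta|_F \sim \wl{\bb C}_\k p_{e,\k}^2/h_\k$) together with Assumption~\ref{local_bv} to pass between the two sides of a face, and finally apply the $q=1$ and $q=2$ local approximation estimates; the powers of $h_\k$ and $p_{e,\k}$ combine to give exactly the exponents $2\min(m,p_{e,\k}+1)-2$ and $2m-3$ claimed. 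The scalar estimate for $\tpn{\psi - \psi_I}_{\dG,a}$ is entirely analogous, with $\rho_a$ and $\chi$ in place of $\bb C$ and $\eta$.

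For the second pair of bounds, I would use the \emph{same} interpolant applied pointwise in time together with the commutation $\dot{(\bf v_I)} = (\dot{\bf v})_I$ (the approximation operator of \cite{cangiani-book} being linear and time-independent), so that $\bf v - \bf v_I$ and $\dot{\bf v} - \dot{(\bf v)_I}$ are both controlled by the first estimate. Then I would expand $\|\bf v - \bf v_I\|_{\E_e}^2$ from \eqref{energies} into $\|\rho_e^{1/2}(\dot{\bf v} - \dot{\bf v_I})\|_{\Oe}^2 + \|\rho_e^{1/2}\z(\bf v - \bf v_I)\|_{\Oe}^2 + \|\bf v - \bf v_I\|_{\dG,e}^2$; the first two terms are bounded by the $q=0$ local estimate (using $\rho_e \le \rho_e^+$, $\z \in L^\infty$), and the last by the bulk-and-jump part of the already-established triple-bar bound, applied to $\bf v$ (no flux term is needed here). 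Collecting the $\dot{\bf v}$- and $\bf v$-contributions gives the stated sum. I expect the main obstacle to be the bookkeeping in the face-term estimates: one must track how the element-wise constants $\wl{\bb C}_\k$ and $\wl\rho_{a,\k}$ cancel against the material factors hidden in $\lesssim$, verify that passing from $\k^+$ to $\k^-$ across a face costs only constants controlled by Assumption~\ref{local_bv}, and confirm that the extra power of $p$ from the trace-inverse inequality \eqref{eq:trace.inv.ineq} is precisely what downgrades $p^{2m-2}$ to $p^{2m-3}$ in the flux term — everything else is a routine assembly of known $hp$-approximation ingredients.
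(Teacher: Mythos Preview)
Your proposal is correct and follows precisely the route the paper indicates: the paper does not give a detailed proof but simply states that the lemma is a consequence of the $hp$-approximation properties in \cite[Lemmas~23 and~33]{cangiani-book} together with Assumption~\ref{local_bv}. Your sketch fills in exactly these ingredients---local polynomial approximation on the covering simplices, trace estimates plus the definitions \eqref{stabiliz.a}--\eqref{stabiliz.b} for the face terms, local bounded variation to pass across faces, and linearity of the interpolant to commute with the time derivative for the energy-norm bounds---so there is nothing to add.
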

We are now ready to state the main result of this section.
\begin{theorem}[\emph{A priori} error estimate in the energy norm]
\label{error-estimate}
Let Assumptions \ref{uno}--\ref{local_bv} hold. Assume that the exact solution of problem \eqref{strong_form}
is such that $\bf u\in C^2([0,T]; \bf H^m(\Oe))$ and  $\phi \in C^2([0,T];H^n(\Oa))$, with $m \ge 2$, $n\ge 2$. Let $(\bf u_h,\phi_h) \in C^2([0,T];\bf V_{\! h}^e) \times C^2([0,T]; V_h^a)$ be the corresponding solution of the semi-discrete problem \eqref{coupled_dG}, with sufficiently large penalty parameters $\alpha$ and $\beta$ in \mbox{\eqref{stabiliz.a}--\eqref{stabiliz.b}}. Then, the following bound holds for the discretization error
$\bf E(t)  = (\bf e_e(t), e_a(t))  = (\bf u(t) - \bf u_h(t), \phi(t)  - \phi_h(t))$:
%\hspace{-1cm}
\begin{equation}
\begin{alignedat}{2}\label{eq:error-estimate}
%\begin{aligned}
%\hspace{-.5cm}
&
\sup_{t\in[0,T]} \|\bf E(t)\|_\E^2 \lesssim \sup_{t\in[0,T]}\left(\sum_{\k\in\cc T_h^e}\frac{h_\k^{2\min(m,p_{e,\k}+1)-2}}{p_{e,\k}^{2m -3}}\left(
\|\mathscr E \dot{\bf u}\|_{m,\cc K}^2 + \|\mathscr E \bf u\|_{m,\cc K}^2\right)
\right.  &&\\
&\qquad\qquad\qquad\qquad\left.
+\sum_{\k\in\cc T_h^a}\frac{h_\k^{2\min(n,p_{a,\k}+1)-2}}{p_{a,\k}^{2n-3}}
\left( \|\mathscr E \dot\phi\|_{n,\cc K}^2 +  \|\mathscr E \phi\|_{n,\cc K}^2\right)
\right) + &&\\
%\hspace{2cm}
&\qquad\qquad\quad \ \
+ \int_0^T\!\left( \sum_{\k \in\cc T_h^e}\frac{h_\k^{2\min(m,p_{e,\k}+1)-2}}{p_{e,\k}^{2m-3}}\left(
\|\mathscr E \ddot{\bf u}\|_{m,\cc K}^2 +
 \|\mathscr E \dot{\bf u}\|_{m,\cc K}^2 + \|\mathscr E \bf u\|_{m,\cc K}^2\right) \right. 
  &&\\
%\hspace{1.5cm}
&\qquad\qquad\qquad\left. 
+ \sum_{\k\in\cc T_h^a}\frac{h_\k^{2\min(n,p_{a,\k}+1)-2}}{p_{a,\k}^{2n-3}}\left(
\|\mathscr E \ddot{\phi}\|_{n,\cc K}^2 +
\|\mathscr E \dot{\phi}\|_{n,\cc K}^2 + 
\|\mathscr E \phi\|_{n,\cc K}^2\right)
\right)\d \tau. &&
\end{alignedat}
\end{equation}
\end{theorem}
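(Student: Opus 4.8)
The plan is to run the classical energy argument for second‑order hyperbolic problems, adapted to the coupled dG setting. The first step is \emph{Galerkin orthogonality}: since the exact solution is, by hypothesis, $\bf H^m$‑ resp.\ $H^n$‑regular with $m,n\ge2$, all of its jumps across internal and Dirichlet faces vanish, the symmetry/penalty contributions in $\cc A_h^e,\cc A_h^a$ are consistent, and the interface traction is reproduced by $\cc I_h^e$ through the transmission condition $\bm\sigma(\bf u)\bf n_e=-\rho_a\dot\phi\,\bf n_e$; hence $(\bf u,\phi)$ satisfies \eqref{coupled_dG} against every $(\bf v_h,\psi_h)\in\bf V_{\! h}^e\times V_h^a$. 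Subtracting \eqref{coupled_dG} gives, for all such test pairs and all $t$,
\[
(\rho_e\ddot{\bf e}_e,\bf v_h)_\Oe + (c^{-2}\rho_a\ddot e_a,\psi_h)_\Oa + (2\rho_e\z\dot{\bf e}_e,\bf v_h)_\Oe + (\rho_e\z^2\bf e_e,\bf v_h)_\Oe + \cc A_h^e(\bf e_e,\bf v_h) + \cc A_h^a(e_a,\psi_h) + \cc I_h^e(\dot e_a,\bf v_h) + \cc I_h^a(\dot{\bf e}_e,\psi_h) = 0.
\]

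I would then split the error through the interpolants of Lemma~\ref{interpolation_estimates}, writing $\bf e_e=\bm\theta_e-\bf e_{h,e}$ and $e_a=\theta_a-e_{h,a}$ with $\bm\theta_e=\bf u-\bf u_I$, $\theta_a=\phi-\phi_I$ the approximation errors and $\bf e_{h,e}=\bf u_h-\bf u_I\in\bf V_{\! h}^e$, $e_{h,a}=\phi_h-\phi_I\in V_h^a$ the discrete remainders, set $\bf E_h=(\bf e_{h,e},e_{h,a})$, and choose the discrete initial data so that $\|\bf E_h(0)\|_\E$ is bounded by the interpolation error at $t=0$. Inserting this splitting into the orthogonality identity and moving all $\bm\theta$‑contributions to the right produces an identity for $\bf E_h$ whose left‑hand side has exactly the structure of \eqref{coupled_dG}.

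I would next test that identity with $(\bf v_h,\psi_h)=(\dot{\bf e}_{h,e},\dot e_{h,a})$. As in the proof of Theorem~\ref{stability}, the discrete interface terms cancel by the skew‑symmetry of $\cc I_h^e,\cc I_h^a$ (Remark~\ref{energy_norm_remark}), and the left‑hand side equals $\tfrac12\tfrac{\d}{\d t}\big(\|\bf E_h\|_\E^2-2\la\av{\bm\sigma_h(\bf e_{h,e})},\jm{\bf e_{h,e}}\ra_{\cc F_h^e}-2\la\av{\rho_a\grad_h e_{h,a}},\jm{e_{h,a}}\ra_{\cc F_h^a}\big)+2\|\rho_e^{\nf{1}{2}}\z^{\nf{1}{2}}\dot{\bf e}_{h,e}\|_\Oe^2$. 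Integrating over $(0,t)$, discarding the nonnegative damping term, and invoking Lemma~\ref{bounds} (for $\alpha,\beta$ large enough, so the quadratic form on the left is $\gtrsim\|\bf E_h(t)\|_\E^2$ and $\lesssim\|\bf E_h(0)\|_\E^2$ at $t=0$) yields $\|\bf E_h(t)\|_\E^2\lesssim\|\bf E_h(0)\|_\E^2+|\mathcal R(t)|$, where $\mathcal R(t)$ is the sum of the time integrals over $(0,t)$ of $(\rho_e\ddot{\bm\theta}_e,\dot{\bf e}_{h,e})_\Oe$, $(c^{-2}\rho_a\ddot\theta_a,\dot e_{h,a})_\Oa$, $(2\rho_e\z\dot{\bm\theta}_e,\dot{\bf e}_{h,e})_\Oe$, $(\rho_e\z^2\bm\theta_e,\dot{\bf e}_{h,e})_\Oe$, $\cc A_h^e(\bm\theta_e,\dot{\bf e}_{h,e})$, $\cc A_h^a(\theta_a,\dot e_{h,a})$, $\cc I_h^e(\dot\theta_a,\dot{\bf e}_{h,e})$ and $\cc I_h^a(\dot{\bm\theta}_e,\dot e_{h,a})$. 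The four volume terms are estimated by Cauchy--Schwarz, using $\|\rho_e^{\nf{1}{2}}\dot{\bf e}_{h,e}\|_\Oe\le\|\bf E_h\|_\E$ and $\|c^{-1}\rho_a^{\nf{1}{2}}\dot e_{h,a}\|_\Oa\le\|\bf E_h\|_\E$ together with the $L^2$‑interpolation bounds of Lemma~\ref{interpolation_estimates}. For $\cc A_h^e(\bm\theta_e,\dot{\bf e}_{h,e})$ and $\cc A_h^a(\theta_a,\dot e_{h,a})$, since $\|\cdot\|_\E$ controls $\|\bf e_{h,e}\|_{\dG,e}$ but not $\|\dot{\bf e}_{h,e}\|_{\dG,e}$, I would integrate by parts in time, $\int_0^t\cc A_h^e(\bm\theta_e,\dot{\bf e}_{h,e})\,\d\tau=\cc A_h^e(\bm\theta_e,\bf e_{h,e})\big|_0^t-\int_0^t\cc A_h^e(\dot{\bm\theta}_e,\bf e_{h,e})\,\d\tau$, and then apply the extended continuity bound \eqref{cont_estesa} followed by Young's inequality, absorbing the resulting $\|\bf E_h(t)\|_\E^2$ into the left‑hand side with a small parameter.

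The main obstacle is the pair of interface terms $\int_0^t\cc I_h^e(\dot\theta_a,\dot{\bf e}_{h,e})\,\d\tau$ and $\int_0^t\cc I_h^a(\dot{\bm\theta}_e,\dot e_{h,a})\,\d\tau$: these are the only contributions in which a trace on $\Gamma_{\mr I}$ of a discrete function has to be controlled by bulk energy, and this is also the step where attaining the asserted $hp$‑rate is most delicate. I would first integrate by parts in time to move the time derivative onto $\bm\theta_e,\theta_a$; then bound $\|\bf e_{h,e}\|_{L^2(\Gamma_{\mr I})}$ and $\|e_{h,a}\|_{L^2(\Gamma_{\mr I})}$ by $\|\bf E_h\|_\E$ times interface‑dependent factors, using the polytopic trace--inverse inequality \eqref{eq:trace.inv.ineq} on the interface elements $\cc T_{h,\mr I}^e,\cc T_{h,\mr I}^a$ and a discrete Korn/Poincar\'e inequality; and estimate the smooth factors $\dot{\bm\theta}_e,\ddot{\bm\theta}_e,\dot\theta_a,\ddot\theta_a$ on $\Gamma_{\mr I}$ through a multiplicative trace inequality and the componentwise $hp$‑approximation bounds underlying Lemma~\ref{interpolation_estimates}, using the local bounded‑variation Assumption~\ref{local_bv} to compare meshsizes and polynomial degrees across the interface. (Equivalently, these two terms vanish identically if the interpolants are additionally chosen so as to annihilate, on each face of $\cc F_{h,\mr I}$, the polynomial moments tested by $\bf V_{\! h}^e$ and $V_h^a$.) Collecting all bounds, replacing every interpolation error by the right‑hand side of Lemma~\ref{interpolation_estimates}, and using $\|\bf E(t)\|_\E\le\|(\bm\theta_e(t),\theta_a(t))\|_\E+\|\bf E_h(t)\|_\E$ with the $C^1$‑in‑time part of Lemma~\ref{interpolation_estimates}, one arrives at $\|\bf E_h(t)\|_\E^2\lesssim\big(\text{right‑hand side of }\eqref{eq:error-estimate}\big)+\int_0^t\Psi(\tau)\,\|\bf E_h(\tau)\|_\E\,\d\tau$ with $\int_0^T\Psi^2$ again dominated by that right‑hand side; Gronwall's lemma then delivers \eqref{eq:error-estimate}. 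I expect the interface estimate to be the only genuinely delicate point, the rest being a routine extension of the stability argument.
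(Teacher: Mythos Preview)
Your plan is essentially the paper's proof: strong consistency $\Rightarrow$ error equation, split through the interpolants of Lemma~\ref{interpolation_estimates}, test with $(\dot{\bf e}_{h,e},\dot e_{h,a})$, invoke Lemma~\ref{bounds}, integrate by parts in time on the $\cc A_h^e,\cc A_h^a$ (and $\rho_e\z^2$) contributions, then close with Young and Gronwall. The one place where your route diverges is the interface step, and there the paper's argument is simpler than what you propose.

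You suggest integrating $\int_0^t\cc I_h^e(\dot\theta_a,\dot{\bf e}_{h,e})\,\d\tau$ by parts in time and then controlling $\|\bf e_{h,e}\|_{L^2(\Gamma_{\mr I})}$ via the trace--inverse inequality \emph{plus} a discrete Korn/Poincar\'e inequality (to pass from $\|\bf e_{h,e}\|_{\Omega_e}$ to $\|\bf e_{h,e}\|_{\dG,e}\le\|\bf E_h\|_\E$). This can be made to work, but the extra Korn/Poincar\'e ingredient is not provided in the paper and is unnecessary. The paper does \emph{not} integrate these interface terms by parts in time at all: it applies the trace--inverse inequality~\eqref{eq:trace.inv.ineq} directly to the discrete factor $\dot{\bf e}_{h,e}$, obtaining
\[
\cc I_h^e(\dot\theta_a,\dot{\bf e}_{h,e})
\lesssim \Big(\sum_{\k\in\cc T_{h,\mr I}^a} p_{a,\k}h_\k^{-\nf12}\|\dot\theta_a\|_{\partial\k}\Big)\,\|\bf e_{h,e}\|_{\E_e},
\]
because $\|\rho_e^{\nf12}\dot{\bf e}_{h,e}\|_{\Omega_e}$ is already part of the energy norm. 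The resulting boundary quantity $\sum_\k p_{a,\k}h_\k^{-\nf12}\|\dot\theta_a\|_{\partial\k}$ is then estimated by the face $hp$--approximation bound (Lemma~33 in \cite{cangiani-book}), and Assumption~\ref{local_bv} handles the $e$--to--$a$ mismatch of $p$ and $h$ across $\Gamma_{\mr I}$. So the reason you needed integration by parts on the $\cc A_h^e,\cc A_h^a$ terms---that $\|\dot{\bf e}_{h,e}\|_{\dG,e}$ is not in the energy---does not apply here: for the interface terms the relevant discrete norm is $\|\dot{\bf e}_{h,e}\|_{\Omega_e}$, which \emph{is} available. Dropping your time--integration-by-parts on $\cc I_h^e,\cc I_h^a$ removes the need for Korn/Poincar\'e and matches the paper exactly.
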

\begin{corollary}[\emph{A priori} error estimate in the energy norm]
Under the hypotheses of Theorem~\ref{error-estimate}, assume that $h \simeq h_\k$ for any $\k \in \cc T_h$, $p_{e,\k} = p_e$ for any $\k \in \cc T_h^e$, and $p_{a,\k} = p_a$ for any $\k \in \cc T_h^a$. Then, if $({\bf u},\phi) \in C^2([0,T]; \bf H^m(\Oe)) \times C^2([0,T];H^n(\Oa))$ with $m\ge p_e+1$ and $n \ge p_a+1$, the error estimate \eqref{eq:error-estimate} reads
%\begin{multline*}
%\hspace{-.4cm}
%\sup_{t\in[0,T]} \|\bf E_h(t)\|_\E^2 \lesssim \frac{h^{2p_{e,\k}}}{p_{e,\k}^{2m-3}}
%\left( \sup_{t\in[0,T]} \left( \|\dot{\bf u}(t)\|_{m,\Oe}^2 + \|\bf u(t)\|_{m,\Oe}^2\right)\right. \\
%\left. +\int_0^T \left(
%\|\ddot{\bf u}(t)\|_{m,\Oe}^2 + \|\dot{\bf u}(t)\|_{m,\Oe}^2 + \|\bf u(t)\|_{m,\Oe}^2
%\right)\d t
%\right)
%\\ + \frac{H^{2p_{a,\k}}}{p_{a,\k}^{2n-3}}
%\left( \sup_{t\in[0,T]} \|\phi(t)\|_{n,\Oa}^2 + \int_0^T\big(
%\|\dot{\phi}(t)\|_{n,\Oa}^2 + 
%\|\phi(t)\|_{n,\Oa}^2\big)\,\d t
%\right).
%\end{multline*}
\begin{equation}\label{errore'}
\begin{aligned}
\hspace{-.1cm}
\sup_{t\in[0,T]} \|\bf E(t)\|_\E^2 \lesssim & \ \frac{h^{2p_e}}{p_e^{2m-3}}
\Bigg( \sup_{t\in[0,T]} \left( \|\dot{\bf u}\|_{m,\Oe}^2 + \|\bf u\|_{m,\Oe}^2\right)
%& 
%\qquad\qquad
 +\int_0^T\!\! \left( \|\ddot{\bf u}\|_{m,\Oe}^2 + \|\dot{\bf u}\|_{m,\Oe}^2 + \|\bf u\|_{m,\Oe}^2
\right)\d t
\Bigg)
\\ & + \frac{h^{2p_a}}{p_a^{2n-3}}
\Bigg( \sup_{t\in[0,T]} \left( \|\dot\phi\|_{n,\Oa}^2 + \|\phi\|_{n,\Oa}^2 \right)
%\\ & \qquad\qquad 
+ \int_0^T \! \! \big(
\|\ddot{\phi}\|_{n,\Oa}^2 +
\|\dot{\phi}\|_{n,\Oa}^2 + 
\|\phi\|_{n,\Oa}^2\big)\,\d t
\Bigg). %\\[1ex]
\end{aligned}
\end{equation}
\end{corollary}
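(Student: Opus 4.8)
The plan is to obtain \eqref{errore'} directly from Theorem~\ref{error-estimate} by specializing the mesh and polynomial‑degree parameters. First I would observe that, under the hypothesis $m \ge p_e+1$, one has $\min(m,p_{e,\k}+1) = p_{e,\k}+1 = p_e+1$ for every $\k\in\cc T_h^e$, so that the exponent of $h_\k$ in \eqref{eq:error-estimate} collapses to $2\min(m,p_{e,\k}+1)-2 = 2p_e$; symmetrically, $n\ge p_a+1$ yields exponent $2p_a$ on the acoustic side. Using the quasi‑uniformity hypothesis $h\simeq h_\k$ I would then replace each $h_\k^{2p_e}$ (resp.\ $h_\k^{2p_a}$) by $h^{2p_e}$ (resp.\ $h^{2p_a}$) up to a multiplicative constant independent of the discretization parameters, and, since $p_{e,\k}\equiv p_e$ and $p_{a,\k}\equiv p_a$, I would factor the now constant quantities $p_e^{-(2m-3)}$ and $p_a^{-(2n-3)}$ out of the element sums.

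The second ingredient is to collapse the element‑wise sums of local Sobolev norms into global norms over $\Oe$ and $\Oa$. Here I would invoke Assumption~\ref{due}: the bounded‑overlap property of the covering $\cc T_h^\sharp$, combined with the stability bound $\|\mathscr E v\|_{m,\bb R^d}\le C\|v\|_{m,\Oe}$ of the extension operator, gives $\sum_{\k\in\cc T_h^e}\|\mathscr E\Psi\|_{m,\cc K}^2 \lesssim \|\Psi\|_{m,\Oe}^2$ for each of $\Psi\in\{\bf u,\dot{\bf u},\ddot{\bf u}\}$, with the obvious acoustic analogue for $\phi,\dot\phi,\ddot\phi$ on $\Oa$. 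Applying this inside both the pointwise‑in‑time term and the time integral of \eqref{eq:error-estimate}, and using subadditivity of $\sup_{t\in[0,T]}$ together with linearity of $\int_0^T(\cdot)\,\d\tau$ to separate the elastic and acoustic contributions, yields precisely the right‑hand side of \eqref{errore'}.

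Since each step is an elementary estimate, there is no substantive obstacle; the only point requiring a little care is the passage from the covering‑based local norms $\|\mathscr E\,\cdot\,\|_{m,\cc K}$ to the global norms on $\Oe$ and $\Oa$, which relies on the bounded‑overlap clause of Assumption~\ref{due} and on choosing $\mathscr E$ to be a global Sobolev extension of the (globally $\bf H^m(\Oe)$‑, resp.\ $H^n(\Oa)$‑regular) exact solution rather than a merely element‑local one. With that identification in place the corollary follows at once.
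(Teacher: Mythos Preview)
Your proposal is correct and matches the paper's treatment: the paper states the corollary without an explicit proof, regarding it as an immediate specialization of Theorem~\ref{error-estimate}, and the steps you outline (collapsing $\min(m,p_{e,\k}+1)$ to $p_e+1$ and likewise on the acoustic side, using $h\simeq h_\k$ and the uniform degrees to factor out the $hp$ constants, and then invoking the bounded-overlap clause of Assumption~\ref{due} together with the stability of the extension operator to pass from $\sum_\k\|\mathscr E\,\cdot\,\|_{m,\cc K}^2$ to the global Sobolev norms) are precisely the routine manipulations this specialization requires. The only caveat you already flagged---that one must use a global Sobolev extension of the globally $\bf H^m(\Oe)$- (resp.\ $H^n(\Oa)$-) regular exact solution so that the extension bound refers to $\|\cdot\|_{m,\Oe}$ rather than to element-local norms---is handled correctly.
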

\begin{proof}[Proof of Theorem \ref{error-estimate}]
It is easy to see that the semi-discrete formulation \eqref{coupled_dG} is \emph{strongly consistent}, i.e., the exact solution $(\bf u, \phi)$ satisfies \eqref{coupled_dG} for any $t\in(0,T]$:
\begin{equation}\label{consistency}
\begin{multlined}
%\hspace{-.4cm}
 (\rho_e\ddot{\bf u},\bf v)_{\Omega_e} + (c^{-2}\rho_a\ddot\phi,\psi)_{\Omega_a}  %\\ 
%& \qquad
+ (2\rho_e\z \dot{\bf u},\bf v)_{\Omega_e} +
 (\rho_e\z^2\bf u,\bf v)_{\Omega_e} \\[4pt] %\\
+ \cc A_h^e(\bf u,\bf v)  + \cc A_h^a(\phi,\psi)% \\
%& \qquad\qquad\qquad
+  \cc I_h^e(\dot\phi, \bf v) + \cc I_h^a(\dot{\bf u}, \psi)  \\
\hspace{4cm} = ({\bf f_e}, \bf v)_{\Omega_e} + (\rho_a f_a,\psi)_\Oa,
\quad \forall (\bf v,\psi)\in \bf V_{\! h}^e\times V_h^a.
%\end{aligned}
\end{multlined}
\end{equation}
Subtracting \eqref{coupled_dG} from the above identity, we obtain the \emph{error equation}:
\begin{equation}\label{error_equation}
\begin{multlined}
%\begin{aligned}
%\hspace{-.8cm}
(\rho_e\ddot{\bf e}_e,\bf v)_{\Omega_e} + (c^{-2}\rho_a\ddot e_a,\psi)_{\Omega_a} 
%& \qquad
+ (2\rho_e\z \dot{\bf e}_e,\bf v)_{\Omega_e} +
 (\rho_e\z^2\bf e_e,\bf v)_{\Omega_e} \\[4pt]
 %\\
%
%&\qquad\qquad 
%\hspace{.8cm}
+ \cc A_h^e(\bf e_e,\bf v)  + \cc A_h^a(e_a,\psi) %\\
%& \qquad\qquad\qquad
+  \cc I_h^e(\dot e_a, \bf v) + \cc I_h^a(\dot{\bf e}_e, \psi)  = 0,
\ \ \forall (\bf v,\psi)\in \bf V_{\! h}^e\times V_h^a.
%\end{aligned}
\end{multlined}
\end{equation}
We next decompose the error $\bf E = (\bf e_e, e_a)$ as follows: $\bf E = \bf E_I - \bf E_h$, with $\bf E_I = (\bf e_I, e_I) = (\bf u - \bf u_I, \phi - \phi_I)$, and $\bf E_h = (\bf e_h, e_h) =  (\bf u_h - \bf u_I, \phi_h - \phi_I)$, $(\bf u_I,\phi_I) \in \bf V_{\! h}^e\times V_h^a$ being the interpolants defined as in Lemma \ref{interpolation_estimates}. By taking as test functions $(\bf v, \psi) = (\dot{\bf e}_h, \dot e_h)$, the above identity reads then
\begin{multline*}
(\rho_e\ddot{\bf e}_h,\dot{\bf e}_h)_{\Omega_e} + (c^{-2}\rho_a\ddot e_h,\dot e_h)_{\Omega_a} 
%& \qquad
+ (2\rho_e\z \dot{\bf e}_h,\dot{\bf e}_h)_{\Omega_e} +
 (\rho_e\z^2\bf e_h,\dot{\bf e}_h)_{\Omega_e} \\
 %\\
%
%&\qquad\qquad 
+ \cc A_h^e(\bf e_h,\dot{\bf e}_h)  + \cc A_h^a(e_h,\dot e_h) %\\
%& \qquad\qquad\qquad
+  \cc I_h^e(\dot e_h, \dot{\bf e}_h) + \cc I_h^a(\dot{\bf e}_h, \dot e_h)  \\
= (\rho_e\ddot{\bf e}_I,\dot{\bf e}_h)_{\Omega_e} + (c^{-2}\rho_a\ddot e_I,\dot e_h)_{\Omega_a} 
%& \qquad
+ (2\rho_e\z \dot{\bf e}_I,\dot{\bf e}_h)_{\Omega_e} +
 (\rho_e\z^2\bf e_I,\dot{\bf e}_h)_{\Omega_e} \\
+ \cc A_h^e(\bf e_I,\dot{\bf e}_h)  + \cc A_h^a(e_I,\dot e_h) %\\
+  \cc I_h^e(\dot e_I, \dot{\bf e}_h) + \cc I_h^a(\dot{\bf e}_I, \dot e_h).
\end{multline*}
Using the Cauchy--Schwarz inequality to bound the
terms on the right-hand side, the above estimate can be rewritten as
\begin{multline*}
\frac{1}{2}\frac{\d}{\d t} \left( \| \bf E_h \|_\E^2 - 2\la \av{\bm\sigma_h(\bf e_h)}, \jm{\bf e_h} \ra_{{\cc F}_h^e} - 2 \la \av{\rho_a \gradh e_h},\jm{e_h} \ra_{{\cc F}_h^a} \right) \\ + 2 \|\rho_e^{\nf12}\z^{\nf12} \dot{\bf e}_h\|_\Oe^2 %\\
\le \|\dot{\bf e}_I\|_{\E_e}\|\bf e_h\|_{\E_e} + \|\dot e_I\|_{\E_a}\|e_h\|_{\E_a} + 2\|\rho_e^{\nf12}\z^{\nf12}\dot{\bf e}_I\|_\Oe
\|\rho_e^{\nf12}\z^{\nf12}\dot{\bf e}_h\|_\Oe \\
+ \cc A_h^e(\bf e_I,\dot{\bf e}_h)  + \cc A_h^a(e_I,\dot e_h) %\\
+  \cc I_h^e(\dot e_I, \dot{\bf e}_h) + \cc I_h^a(\dot{\bf e}_I, \dot e_h) + (\rho\z^2\bf e_I,\dot{\bf e}_h)_\Oe.
\end{multline*}
This inequality can be further manipulated by observing that $$2\|\rho_e^{\nf12}\z^{\nf12}\dot{\bf e}_I\|_\Oe \|\rho_e^{\nf12}\z^{\nf12}\dot{\bf e}_h\|_\Oe \le \|\rho_e^{\nf12}\z^{\nf12}\dot{\bf e}_I\|_\Oe^2 + \|\rho_e^{\nf12}\z^{\nf12}\dot{\bf e}_h\|_\Oe^2;$$ thereby we obtain
\begin{multline*}
\frac{1}{2}\frac{\d}{\d t} \left( \| \bf E_h \|_\E^2 - 2\la \av{\bm\sigma_h(\bf e_h)}, \jm{\bf e_h} \ra_{{\cc F}_h^e} - 2 \la \av{\rho_a \gradh e_h},\jm{e_h} \ra_{{\cc F}_h^a} \right) \\ + \|\rho_e^{\nf12}\z^{\nf12} \dot{\bf e}_h\|_\Oe^2 %\\
%\hspace{-1cm}
\le \|\dot{\bf e}_I\|_{\E_e}\|\bf e_h\|_{\E_e} + \|\dot e_I\|_{\E_a}\|e_h\|_{\E_a} + \|\rho_e^{\nf12}\z^{\nf12}\dot{\bf e}_I\|_\Oe^2
\\
+ \cc A_h^e(\bf e_I,\dot{\bf e}_h)  + \cc A_h^a(e_I,\dot e_h) %\\
%& \qquad\qquad\qquad
+  \cc I_h^e(\dot e_I, \dot{\bf e}_h) + \cc I_h^a(\dot{\bf e}_I, \dot e_h) + (\rho\z^2\bf e_I,\dot{\bf e}_h)_\Oe.
\end{multline*}
Since $\|\rho_e^{\nf12}\z^{\nf12} \dot{\bf e}_h\|_\Oe^2 \ge 0$, integrating in time between $0$ and $t$, using Lemma \ref{bounds}, and choosing the projections of the initial data such that $\bf e_h(0) = \bf u_{0,h} - (\bf u_0)_I = \bf 0$ and $e_h(0) = \phi_{0,h} - (\phi_0)_I = 0$, we get
\begin{multline}\label{stima_EH}
\| \bf E_h \|_\E^2 \lesssim \int_0^t \left( \|\dot{\bf e}_I\|_{\E_e}\|\bf e_h\|_{\E_e} + \|\dot e_I\|_{\E_a}\|e_h\|_{\E_a} \right) \d\tau+
\int_0^t \|\rho_e^{\nf12}\z^{\nf12}\dot{\bf e}_I\|_\Oe^2 \d\tau % \\
+ \int_0^t (\rho_e\z^2\bf e_I,\dot{\bf e}_h)_\Oe \d \tau \\ 
%\qquad
+ \int_0^t \big(\cc A_h^e(\bf e_I,\dot{\bf e}_h)  + \cc A_h^a(e_I,\dot e_h)\big) \d\tau %\\
+ \int_0^t \big( \cc I_h^e(\dot e_I, \dot{\bf e}_h) + \cc I_h^a(\dot{\bf e}_I, \dot e_h) \big) \d \tau.
\end{multline}
Performing integration by parts in time between $0$ and $t$ on the third term on the right-hand side, and using the fact that $\bf e_h(0) = \bf 0$, $e_h(0) =  0$ and the definition \eqref{energy_norm} of the energy norm yields
$$
\begin{aligned}
\int_0^t (\rho_e\z^2\bf e_I, \dot{\bf e}_h)_\Oe\d\tau 
& = (\rho_e\z^2\bf e_I(t),\bf e_h(t))_\Oe - \int_0^t (\rho_e\z^2\dot{\bf e}_I,\bf e_h)_\Oe\d\tau \\
& 
\lesssim \|\bf e_I\|_{\E_e} \|\bf e_h\|_{\E_e} + \int_0^t \|\dot{\bf e}_I\|_{\E_e} \|\bf e_h\|_{\E_e} \d\tau.
\end{aligned}
$$
Analogously, using the continuity of bilinear forms $\cc A_h^e$ and $\cc A_h^a$ expressed by \eqref{cont_estesa}, and the definition \eqref{energy_norm} of the energy norm, we obtain
$$
\begin{aligned}
\int_0^t \big( \cc A_h^e(\bf e_I,\dot{\bf e}_h)  + \cc A_h^a(e_I,\dot e_h)\big) \d\tau   = & \
 \cc A_h^e(\bf e_I(t),{\bf e}_h(t))  + \cc A_h^a(e_I(t), e_h(t)) \\ 
&  
\ 
- \int_0^t \big(\cc A_h^e(\dot{\bf e}_I,{\bf e}_h)  + \cc A_h^a({\dot e}_I, e_h)\big) \d\tau \\
 & \lesssim \ \vvvert{\bf e_I}\vvvert_\dGe \|\bf e_h\|_{\E_e} + \vvvert{e_I}\vvvert_\dGa \|e_h\|_{\E_a} \\
 & \  +  \int_0^t \big( \tpn{\dot{\bf e}_I}_\dGe \|\bf e_h\|_{\E_e} + \tpn{\dot e_I}_\dGa \|e_h\|_{\E_a} \big)\d\tau.
\end{aligned}
$$

We now seek a bound on the fifth term on the right-hand side of \eqref{stima_EH}. Focusing on the bilinear form $\cc I_h^e$ (cf.~definition \eqref{discr_bilin_forms}), we have
\begin{equation*}
\begin{aligned}
\cc I_h^e(\dot e_I,\dot{\bf e}_h) & = \sum_{F\in \cc F_{h,\mr I}} (\rho_a \dot e_I \bf n_e, \dot{\bf e}_h)_F \le \sum_{F\in \cc F_{h,\mr I}} \| \rho_a \dot e_I\|_F \|\dot{\bf e}_h\|_F %\\ & 
\lesssim
\sum_{\k^e \in \cc T_{h,\mr I}^e, \, \k^a \in \cc T_{h,\mr I}^a} \|\dot e_I\|_{\de\k^a} \|\dot{\bf e}_h\|_{\de\k^e} \\
& 
\lesssim \sum_{\k^e \in \cc T_{h,\mr I}^e, \, \k^a \in \cc T_{h,\mr I}^a} p_{e,\k^e}h_{\k^e}^{-\nf12} \|\dot e_I\|_{\de\k^a} \|\dot{\bf e}_h\|_{\k^e} %\\ &\lesssim \bigg(\sum_{\k \in \cc T_{h,\mr I}^a} p_{a,\k}h_{\k}^{-\nf12} \|\dot e_I\|_{\de\k}\bigg)
 \|\bf e_h\|_{\E_e},
\end{aligned}
\end{equation*}
where we have used the Cauchy--Schwarz inequality, the trace-inverse inequality \eqref{eq:trace.inv.ineq}, the definition \eqref{energy_norm} of the energy norm, and, in the last bound, Assumption \ref{local_bv} on $hp$-local bounded variation.
Hence, we have
\begin{equation}\label{Ja}
\int_0^t \cc I_h^e(\dot e_I, \dot{\bf e}_h) \,\d\tau \lesssim \int_0^t \bigg(\sum_{\k \in \cc T_{h,\mr I}^a} p_{a,\k}h_{\k}^{-\nf12} \|\dot e_I\|_{\de\k}\bigg) \|\bf e_h\|_{\E_e}\d\tau
\equiv \int_0^t \cc J_h^a(\dot e_I)\|\bf e_h\|_{\E_e}\d\tau.
\end{equation}
Recalling that $\cc I_h^a(\dot{\bf e}_I,\dot e_h) = - \cc I_h^e(\dot e_h,\dot{\bf e}_I)$, with completely analogous arguments we obtain
\begin{equation}\label{Je} \int_0^t \cc I_h^a(\dot{\bf e}_I,\dot e_h) \, \d\tau \lesssim \int_0^t \bigg(\sum_{\k \in \cc T_{h,\mr I}^e} p_{e,\k}h_{\k}^{-\nf12} \|\dot{\bf e}_I\|_{\de\k}\bigg) 
\|e_h\|_{\E_a}\d\tau
\equiv \int_0^t \cc J_h^e(\dot{\bf e}_I)\|e_h\|_{\E_a}\d\tau.
\end{equation}
Substituting the above bounds into \eqref{stima_EH}, we get
\begin{align*}
\|\bf E_h\|_\E^2 \lesssim & \left(\|\bf e_I\|_{\E_e} +  \tpn{\bf e_I}_\dGe \right) \|\bf e_h\|_{\E_e} +\tpn{e_I}_\dGa \|e_h\|_{\E_a} + \int_0^t \|\rho_e^{\nf12}\z^{\nf12}\dot{\bf e}_I\|_\Oe^2\d\tau  \\ 
& + \int_0^t  \big(\|\dot{\bf e}_I\|_{\E_e} + \tpn{\dot{\bf e}_I}_\dGe + \cc J_h^e(\dot e_I)\big) \|\bf e_h\|_{\E_e} \d\tau%\\ 
%& 
+ \int_0^t \big(\|\dot e_I\|_{\E_a} + \tpn{\dot e_I}_{\dGa} + \cc J_h^a(\dot{\bf e}_I)\big)
\| e_h\|_{\E_a} \d\tau.
\end{align*}
Observe now that $\|\bf e_h\|_{\E_e} \le \|\bf E_h\|_\E$ and $\|e_h\|_{\E_a}\le \|\bf E_h\|_\E$. Thanks to Young's inequality we have
$$\begin{aligned}
\left(\|\bf e_I\|_{\E_e} +  \tpn{\bf e_I}_\dGe \right) \|\bf e_h\|_{\E_e} & \le \frac{\epsi}{2}\|\bf e_h\|_{\E_e}^2 + \frac{1}{2\epsi}\left(\|\bf e_I\|_{\E_e} +  \tpn{\bf e_I}_\dGe \right)^2 \\ & \le \frac{\epsi}{2}\|\bf E_h\|_{\E}^2 + \frac{1}{\epsi}\left(\|\bf e_I\|_{\E_e}^2 +  \tpn{\bf e_I}_\dGe^2 \right), \\[5pt]
\tpn{e_I}_\dGa \|e_h\|_{\E_a} & \le \frac{\delta}{2} \|\bf E_h\|_\E^2 + \frac{1}{2\delta} \tpn{e_I}_\dGa^2.
\end{aligned}$$
Choosing $\epsi$ such that $1-\frac{1}{2}C\epsi > 0$ and $\delta$ such that $1-\frac{1}{2}C(\delta+\epsi) > 0$, $C$ being the hidden constant in \eqref{stima_EH},
we infer that
$$
\begin{aligned}
\|\bf E_h\|_\E^2  \lesssim & \ \|\bf e_I\|_{\E_e}^2 + \tpn{\bf e_I}_\dGe^2 + \tpn{e_I}_\dGa^2 + \int_0^t \|\rho_e^{\nf12}\z^{\nf12}\dot{\bf e}_I\|_\Oe^2\d\tau
\\ & 
+ \int_0^t \left( \|\dot{\bf e}_I\|_{\E_e} + \tpn{\dot{\bf e}_I}_\dGe + \cc J_h^a(\dot e_I) %\right.\\
%&\qquad 
%\left.
+\|\dot e_I\|_{\E_a} + \tpn{\dot e_I}_{\dGa} + \cc J_h^e(\dot{\bf e}_I)
     \right) \|\bf E_h\|_\E \ \d\tau.
\end{aligned}
$$
Upon setting
$$G= \sup_{t\in [0,T]} \left( \|\bf e_I\|_{\E_e}^2 + \tpn{\bf e_I}_\dGe^2 + \tpn{e_I}_\dGa^2 \right) + \int_0^T \|\rho_e^{\nf12}\z^{\nf12}\dot{\bf e}_I\|_\Oe^2\d\tau,$$
and applying Gronwall's Lemma \cite[p.~28]{quarteroni} along with Jensen's inequality, we get
\begin{equation}\label{final_ineq}
%\begin{aligned}
\|\bf E_h\|_\E^2 \lesssim G + \int_0^T \left(\|\dot{\bf e}_I\|_{\E_e}^2 + \tpn{\dot{\bf e}_I}_\dGe^2  + \cc J_h^e(\dot{\bf e}_I)^2 + \|\dot e_I\|_{\E_a}^2 + \tpn{\dot e_I}_{\dGa}^2 + \cc J_h^a(\dot e_I)^2 \right)\d\tau.
%\end{aligned}
\end{equation}
Owing to $hp$-approximation boundary estimates \cite[Lemma 33]{cangiani-book}, we infer that
$$
\begin{aligned}
\cc J_h^a (\dot e_I) & \lesssim
 \sum_{\k \in \cc T_{h,\mr I}^a} \frac{h_\k^{\min(p_{a,\k}+1,n)-1}}{p_{a,\k}^{n-\nf32}} \| \mathscr E \dot\phi\|_{n,\cc K},
 \\
 \cc J_h^e(\dot{\bf e}_I) & \lesssim
 \sum_{\k \in \cc T_{h,\mr I}^e} \frac{h_\k^{\min(p_{e,\k}+1,m)-1}}{p_{e,\k}^{m-\nf32}} \| \mathscr E \dot{\bf u}\|_{m,\cc K}
\end{aligned}
$$
(cf.~\eqref{Ja} and \eqref{Je}).
Applying the bounds of Lemma \ref{interpolation_estimates} to estimate the energy- and $\dG$-norms in the right-hand side of \eqref{final_ineq}, observing that $\|\bf E(t)\|_{\E}^2 \le 2(\|\bf E_h(t)\|_\E^2 + \|\bf E_I(t)\|_\E^2)$ $\forall t\in [0,T]$, applying again the bounds of Lemma \ref{interpolation_estimates} to estimate the second addend, and taking the supremum over $[0,T]$ of the resulting estimate, the thesis follows.
\end{proof}

\section{Fully discrete formulation}\label{sec:fully.discrete}
By choosing bases for the discrete spaces $\bf V_{\! h}^e$ and $V_h^a$, the semi-discrete algebraic formulation of problem \eqref{coupled_dG} reads
\begin{equation}\label{time_discr}
\begin{dcases}
\begin{aligned}
\sF M_e^1 \ddot{\mathsf U}(t)  + \sF M_e^2 \dot{\mathsf U}(t) + (\sF M_e^3 + \mathsf A_e)% + \sF R_e) 
\mathsf U(t) + \mathsf C_e \dot{\mathsf \Phi} (t)
& = \mathsf F_e(t), \quad t \in (0,T], \\
\mathsf M_a\ddot{\mathsf \Phi}(t) +
 \mathsf A_a\mathsf \Phi(t) + \mathsf C_a \dot{\mathsf U}(t) & = \mathsf F_a(t), \quad t \in (0,T],\\
\mathsf U(0) & = \mathsf U^0, \\
\dot{\mathsf U}(0) & = \mathsf V^0, \\
\mathsf{\Phi}(0) & = \mathsf \Phi^0,\\
\dot{\mathsf\Phi}(0) & = \mathsf \Psi^0,\\
\end{aligned}
\end{dcases}
\end{equation}
where vectors $\sF U(t)$ and $\sF \Phi(t)$ represent the expansion coefficients of $\bm u_h(t)$ and $\phi_h(t)$ in the chosen bases.
Analogously, $\sF M_e^1$, $\sF M_e^2$, $\sF M_e^3$, $\sF A_e$, and $\sF C_e$ are the matrices stemming from the bilinear forms
$$(\rho_e\bf u,\bf v)_{\Oe}, \ (2\rho_e\z \bf u, \bf v)_{\Oe}, \ (\rho_e\z^2 \bf u, \bf v)_{\Oe}, \
\cc A_h(\bf u, \bf v), \ \cc I_h^e(\psi,\bf v),$$
respectively, and $\sF M_a$, $\sF A_a$, $\sF C_a \equiv - \sF C_e^{\sF T}$ represent the bilinear forms
$$(c^{-2}\rho_a\phi,\psi)_{\Oa},\ \cc A_h^a(\phi,\psi), \ \cc I_h^a(\bf v,\psi),$$ respectively. Finally, $\sF F_e(t)$ and $\sF F_a(t)$ are the vector representations of linear functionals $(\bf f_e, \bf v)_\Oe$ and $(\rho_a f_a,\psi)_\Oa$, respectively.

To fully discretize \eqref{coupled_dG}, we employ a time marching method based on centered finite-difference, widely employed for the numerical simulation of wave propagation, namely, the \emph{leap-frog} scheme. % Denoting by $N_{\rm dof}^e$ the total number of elastic degrees of freedom, and by $N_{\rm dof}^a$ the total number of acoustic degrees of freedom, vectors $\mathsf U(t) \in \bb R^{N_{\rm dof}^e}$ and $\mathsf \Phi(t) \in \bb R^{N_{\rm dof}^a}$ contain, for any time $t$, the expansion coefficients of the semi-discrete solution in the chosen set of basis functions. Vectors $\sF U^0$, $\sF V^0$, $\sF \Phi^0$, and $\sF \Psi^0$ constitute the set of initial conditions.
We now subdivide the time interval $(0,T]$ into $N_T$ subintervals of amplitude $\Delta t = T/N_T$ and we denote by $\sF U^i \approx \sF U(t_i)$ and $\sF \Phi^i \approx \sF \Phi(t_i)$ the approximations of $\sF U$ and $\sF \Phi$ at time $t_i = i\Delta t$, $i \in \{1, \dots, N_T\}$. The centered finite-difference method reads then
\begin{equation*}%\label{leap-frog}
\begin{aligned}
\begin{bmatrix}
\sF M_e^1+\frac{\Delta t}{2}\sF M_e^2 & \frac{\Delta t}{2} \sF C_e \\
- \frac{\Delta t}{2} \sF C_e^{\sF T} & \sF M_a
\end{bmatrix} 
\begin{bmatrix} \sF U^{n+1} \\ \sF \Phi^{n+1} \end{bmatrix}
 = & 
\begin{bmatrix}
-\sF M_e^1+\frac{\Delta t}{2}\sF M_e^2 & \frac{\Delta t}{2} \sF C_e \\
- \frac{\Delta t}{2} \sF C_e^{\sF T} & - \sF M_a
\end{bmatrix} 
\begin{bmatrix} \sF U^{n-1} \\ \sF \Phi^{n-1} \end{bmatrix}
 \\
& +
\begin{bmatrix}
2\sF M_e^1 - {\Delta t^2}(\sF A_e +\sF M_e^3) & \sF 0 \\ 
\sF 0 & 2\sF M_a - \Delta t^2 \sF A_a
\end{bmatrix}  
\begin{bmatrix} \sF U^n \\ \sF \Phi^n \end{bmatrix}
+ \Delta t^2 \begin{bmatrix}   \sF F_e^n  \\  \sF F_a^n  \end{bmatrix},
\end{aligned}
\end{equation*}
for $n \in \{1,\dots,N_T-1\}$, where $\sF U^1 = \sF U^0 + \Delta t \, \sF V^0$, $\sF \Phi^1 = \sF \Phi^0 + \Delta t \, \sF \Psi^0$, and $\sF F_{e(a)}^n = \sF F_{e(a)}(t_n)$.
Let us remark that the centered finite-difference method is an explicit second-order-accurate scheme; thus, to ensure its numerical stability, a Courant-Friedrich-Lewy (CFL) condition has to be satisfied (see \cite{quarteroni-valli}).
%%%%
\section{Numerical examples}\label{sec:numerical.examples}
In this section we solve problem \eqref{strong_form} for $\z = 0$ in the rectangle $\Omega = (-1,1) \times (0,1)$ on polygonal meshes such as the one represented in Figure \ref{mesh}. Numerical experiments have been carried out both to test $hp$-convergence (besides validating numerically estimate \eqref{errore'} by computing the \mbox{dG-norm} of the error, we also check convergence of the method in the $L^2$-norm) and to simulate a problem of physical interest, where the system is excited by a point source load in the acoustic domain. In all cases, we assume that $\Oe = (-1,0)\times (0,1)$ is occupied by an \emph{isotropic} material, i.e., $\bb C$ is such that 
$\bm\sigma(\bf u) = 2\mu\,\bm\eps(\bf u) + \lambda(\div\bf u)\bf I$,
with $\mu$ and $\lambda$ the Lamé coefficients, both constant over $\Oe$, and $\Oa = (0,1) \times (0,1)$ is occupied by a fluid with constant density $\rho_a$. The interface is thus given by $\Gamma_{\mr I} = \{0\}\times (0,1)$. Meshes have been generated using \texttt{PolyMesher} \cite{polymesher}. The timestep will be precised depending on the case under consideration. In all of the numerical experiments, all the physical quantities involved are supposed to be dimensionless. In Sections~\ref{sec:hom_bc} and \ref{sec:nonhom_bc} we choose, as in \cite{monkola-sanna}, $\mu = 26.29$, $\lambda = 51.20$, $\rho_e = 2.7$, $\rho_a = 1$, and $c=1$.
\subsection{Test case 1}\label{sec:hom_bc}
In this test case, the right-hand sides $\bf f_e$ and $f_a$ are chosen so that the exact solution is given by
\begin{equation}\label{hom_bc}
\begin{aligned}
\bf u(x,y;t) = x^2 \cos(\sqrt{2}\pi t)\cos\left(\frac{\pi}{2} x\right)\sin(\pi y) \, \wh{\bf u},\ \
 \phi(x,y;t) = x^2 \sin(\sqrt{2}\pi t)\sin(\pi x)\sin(\pi y),
\end{aligned}
\end{equation}
where $\wh{\bf u} = (1,1)$. The timestep is here set to $\Delta t = 10^{-4}$, so that the error due to time integration is negligeable, and the final time is set to $T=1$.
Notice that, in this case, both the left- and right-hand sides of the transmission conditions on $\Gamma_\mr I$ (cf.~\eqref{strong_form}) vanish,
as well as the unknowns $\bf u$ and $\phi$ themselves.
\begin{figure}
\centering
\includegraphics[keepaspectratio=true,scale=.45]{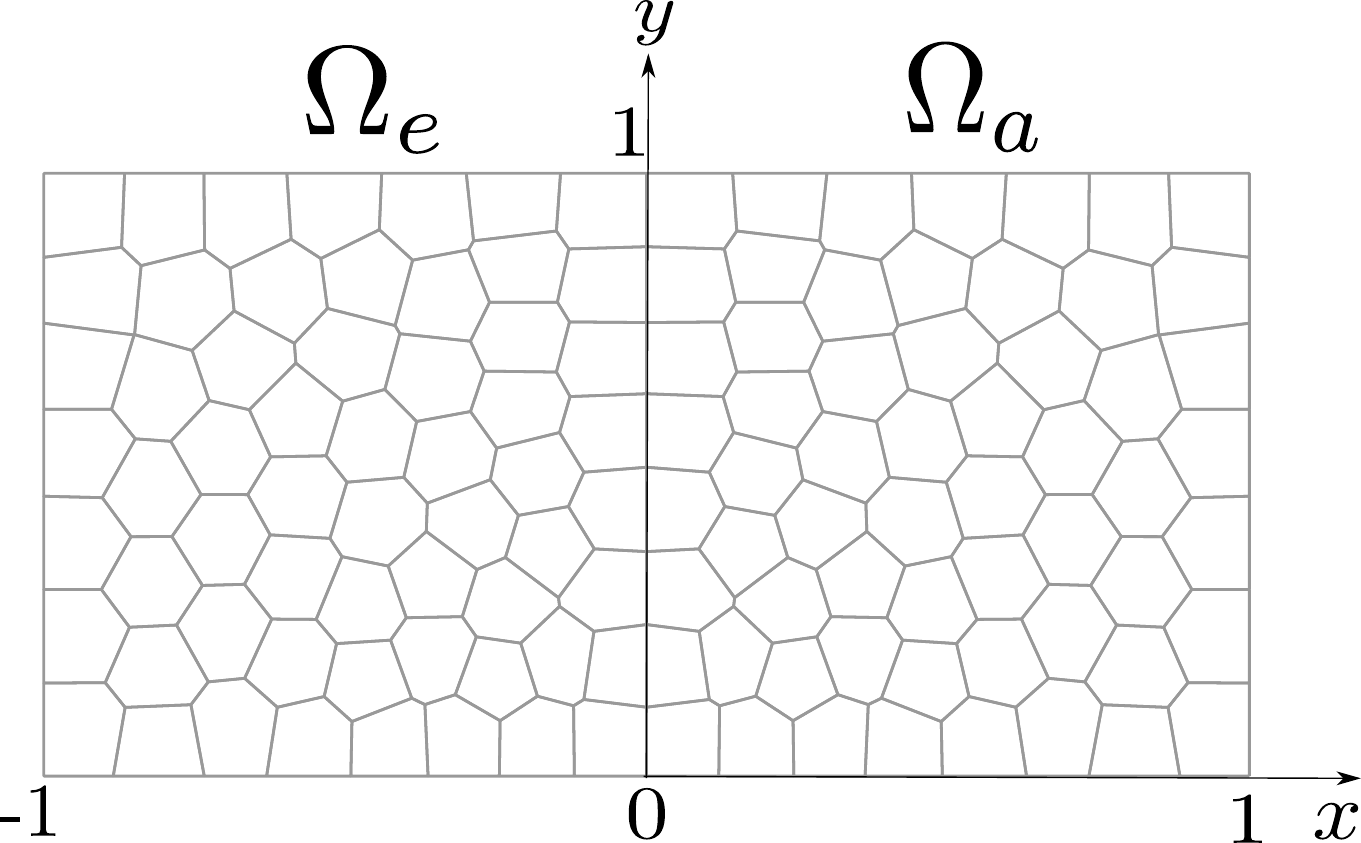}
\caption{Computational domain and mesh made up by $120$ polygons.}
\label{mesh}
\end{figure}
%
%\subsubsection*{\boldmath $h$-refinement}
Figure \ref{errore_h} shows convergence results in the dG- and $L^2$-norms respectively, for four nested, sequentially refined polygonal meshes, %(the coarsest mesh containing 80 elements, the finest 5120), 
when polynomials of uniform degree $p=2$ are employed. The numerical results concerning the dG-error show asymptotic convergence rates that match those predicted by estimate \eqref{errore'}. Also, as it is typical for dG methods, the $L^2$-error turns out to converge in $h^{p+1}$ (see, e.g., \cite[Theorem 2]{antonietti-ferroni-mazzieri-quarteroni} for the case of the elastodynamics equation).
%\subsubsection*{\boldmath $p$-refinement}

Figure \ref{errore_p} shows convergence results in a semilogarithmic scale, in the dG- and $L^2$-norms respectively, for a fixed mesh given by 300 elements and a uniform polynomial degree ranging from 1 to 5. Since the exact solution is analytical, as expected, the error undergoes an exponential decay.%, as predicted., for instance, by the theory of Spectral Element methods (see e.g. \cite[Chapter 10]{quarteroni}).
\begin{figure}
\centering
\subfloat[$\|\bf u - \bf u_h\|_{\dGe}$ and $\|\phi - \phi_h\|_\dGa$ vs.~$h$ at $T=1$]{\includegraphics[scale=.45]{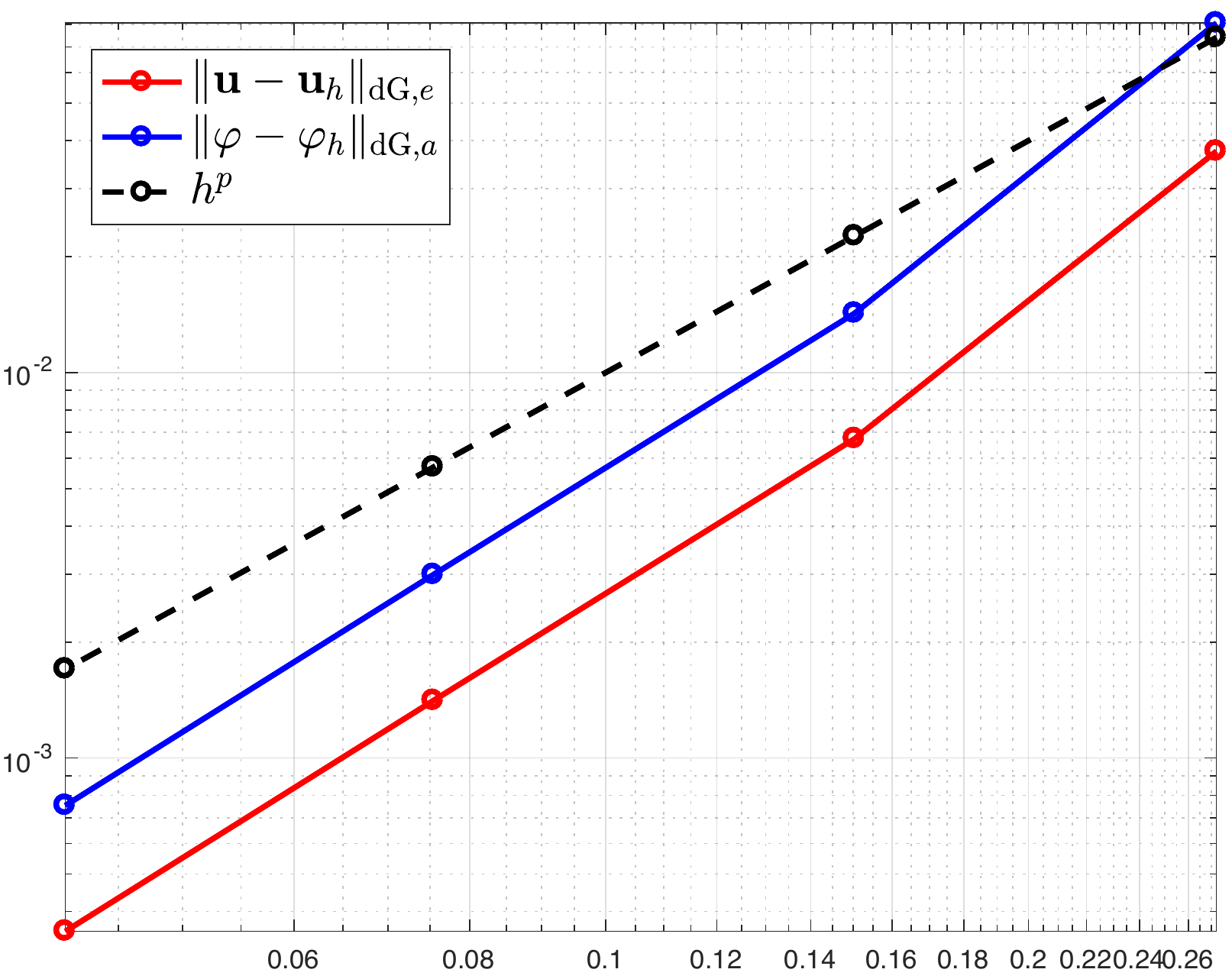}}
\hspace{.01cm}
\subfloat[$\|\bf u - \bf u_h\|_\Oe$ and $\|\phi - \phi_h\|_\Oa$ vs.~$h$ at $T=1$]{\includegraphics[scale=.45]{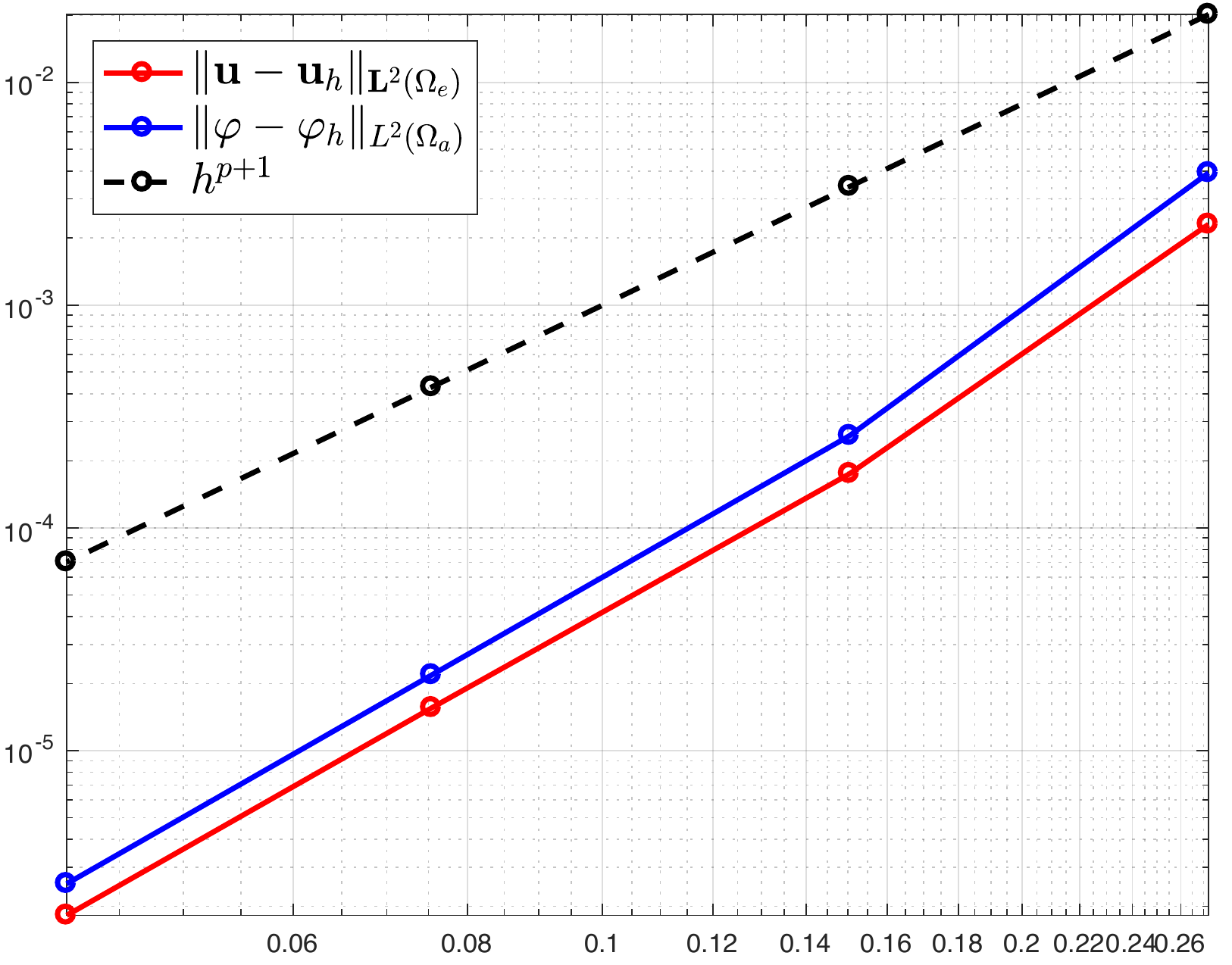}}
\caption{Test case 1. dG-error and $L^2$-error vs.~$h$ for four sequentially refined polygonal meshes and second-order polynomials.}
\label{errore_h}
\end{figure}
\begin{figure}
\centering
\subfloat[$\|\bf u - \bf u_h\|_{\dGe}$ and $\|\phi - \phi_h\|_\dGa$ vs.~$p$ at $T=1$]{\includegraphics[scale=.45]{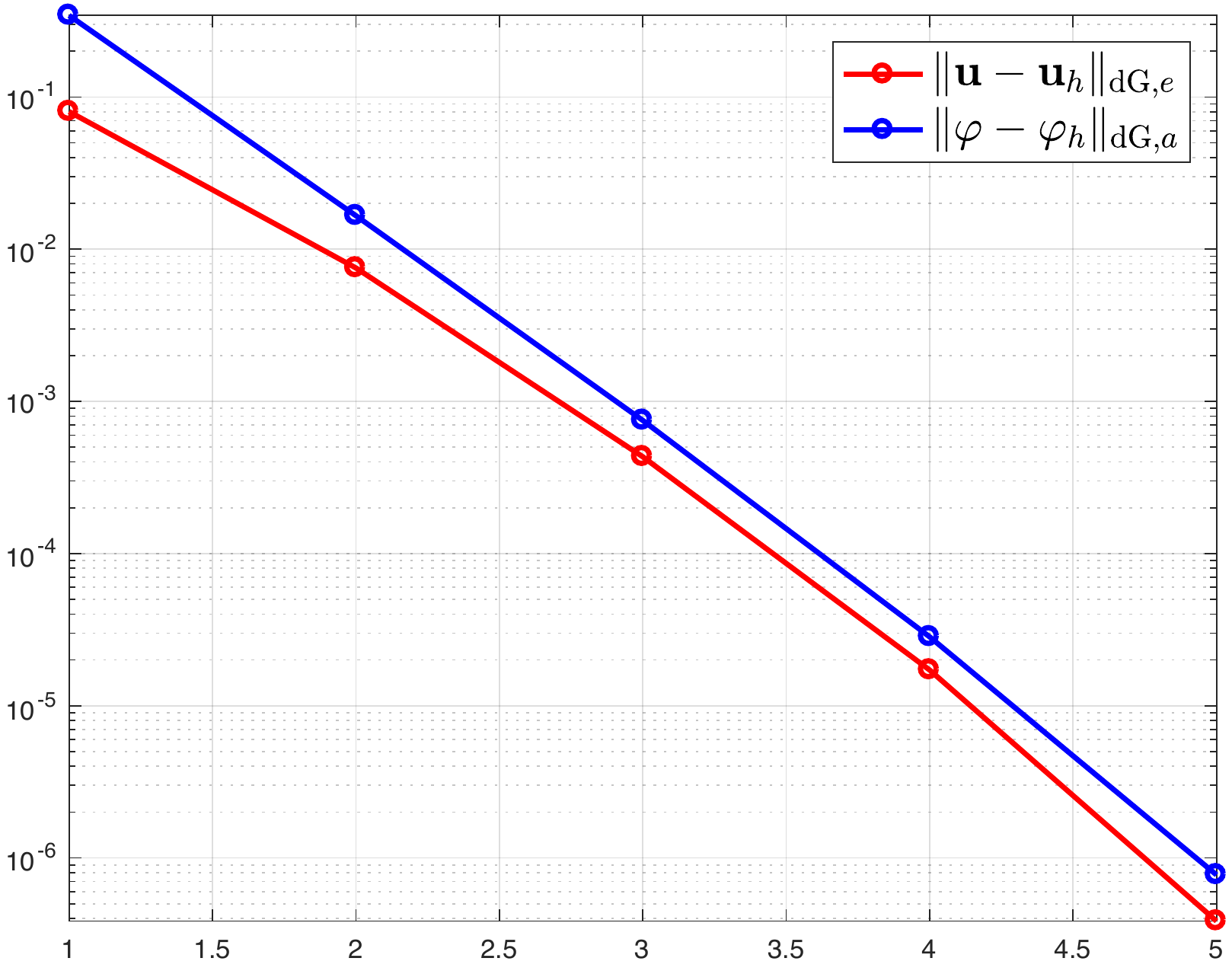}}
\hspace{.01cm}
\subfloat[$\|\bf u - \bf u_h\|_\Oe$ and $\|\phi - \phi_h\|_\Oa$ vs.~$p$ at $T=1$]{\includegraphics[scale=.45]{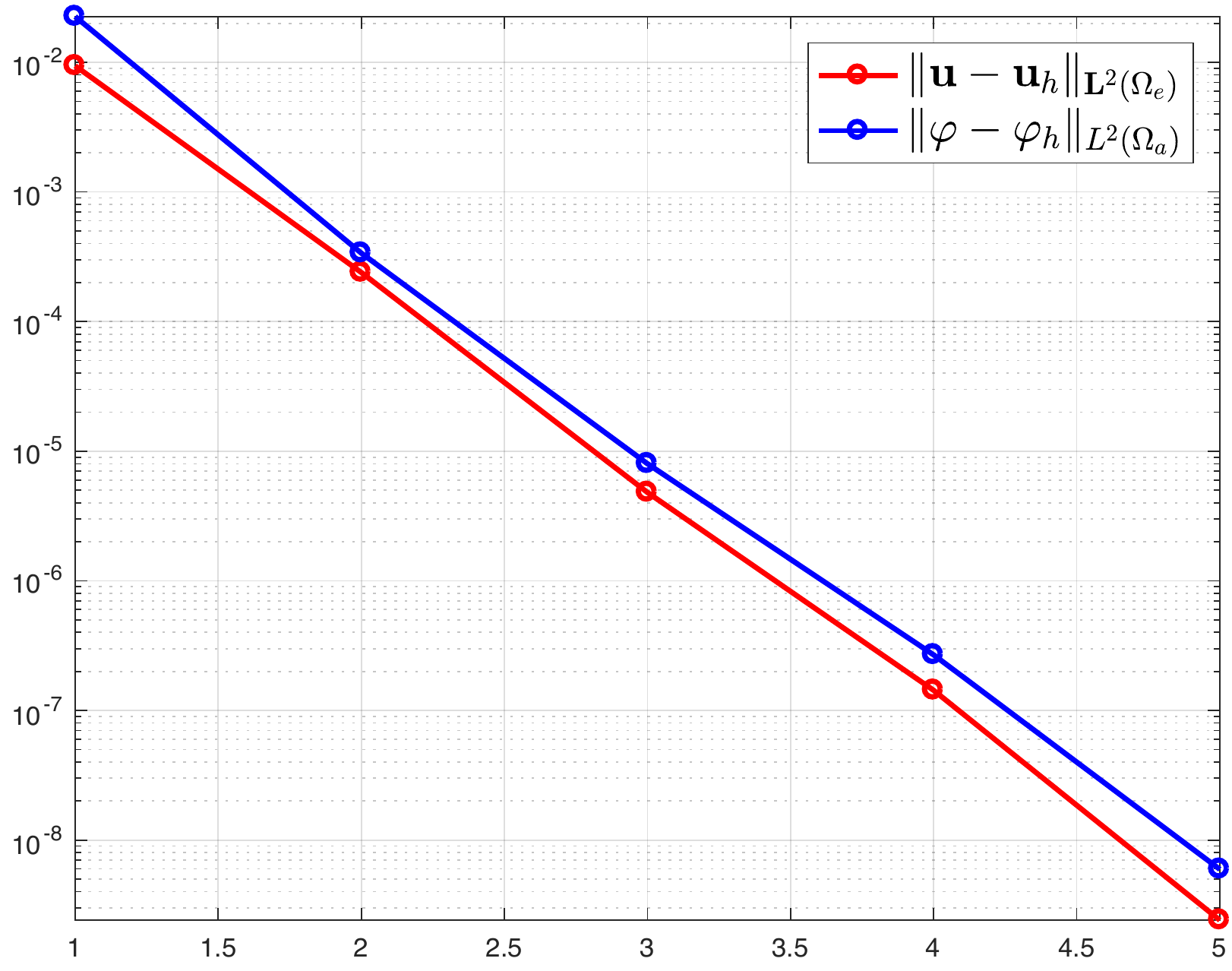}}
\caption{Test case1. dG-error and $L^2$-error vs.~$p$ for $p$ ranging from 1 to 5 and a mesh given by 300 polygons.}
\label{errore_p}
\end{figure}
\subsection{Test case 2}\label{sec:nonhom_bc}
We now choose the right-hand sides $\bf f_e$ and $f_a$ so that the exact solution is given by
\begin{equation}\label{nonhom_bc}
\begin{aligned}
\bf u(x,y;t) = \left(\cos\Big(\frac{4\pi x}{c_p}\Big), \, \cos\Big(\frac{4\pi x}{c_s}\Big)\right)\cos( 4\pi t), \ \
\phi(x,y;t)  = \sin(4\pi x)\sin(4\pi t),
\end{aligned}
\end{equation}
where 
$c_p = \sqrt{\frac{\lambda+2\mu}{\rho_e}}$ and $c_s = \sqrt{\frac{\mu}{\rho_e}}$ are the velocities of pressure and shear waves in the elastic domain, respectively. The same test has been carried out in \cite{monkola-sanna} using a Spectral Element discretization; the choice of material parameters is also the same as in the previous test case. In this case, on $\Gamma_\mr I$, both the traction $\bm\sigma(\bf u)\bf n_e$ and the acoustic pressure $-\rho_a\dot\phi\bf n_e$ vanish; on the other hand, we have $\de\phi/\de\bf n_a = - \dot{\bf u}\cdot \bf n_a = 4\pi\sin(4\pi t)$. The timestep is, again, set to $\Delta t = 10^{-4}$; on the other hand, the final time is in this case set to $T=0.8$, to ensure that none of the two unknowns $\bf u$ and $\phi$ be identically zero when dG- and $L^2$-errors are computed.
%
%\subsubsection*{\boldmath $h$-refinement}

Figure \ref{errore_h_NR} shows convergence results in the dG- and $L^2$-norms respectively, for four nested, sequentially refined polygonal meshes, when polynomials of uniform degree $p=2$ are employed. The numerical results concerning the dG-error again show asymptotic convergence rates matching those predicted by estimate \eqref{errore'}. Also, the $L^2$-error convergence rates turn out to be slightly higher than $h^{p+1}$ both for $\bf u$ and for $\phi$; in the latter case, this difference is more remarkable.
%\subsubsection*{\boldmath $p$-refinement}

Figures \ref{errore_p_NR} shows convergence results in a semilogarithmic scale, in the dG- and $L^2$-norms respectively, for a fixed mesh given by 300 elements and a uniform polynomial degree ranging from 1 to 5. Again, the error undergoes an exponential decay. Notice that, concerning the $L^2$-error on $\bf u$ (Figure \ref{errore_p_NR}b), the convergence rate decreases when passing from polynomial degree 4 to 5: in both cases the $L^2$-error is on the order of $10^{-7}$. This behavior is related to the choice of the timestep $\Delta t$, set to $10^{-4}$; indeed, when a leap-frog time discretization is employed, the error is expected to converge in $\Delta t^2$. In our case, $\Delta t^2 = 10^{-8}$, which is only one order of magnitude lower than the $L^2$-error for $p=4$ and $p=5$. Decreasing the timestep to $\Delta t = 10^{-5}$ allows to recover the expected convergence.
\begin{figure}
\centering
\subfloat[$\|\bf u - \bf u_h\|_{\dGe}$ and $\|\phi - \phi_h\|_\dGa$ vs.~$h$ at $T=0.8$]{\includegraphics[scale=.45]{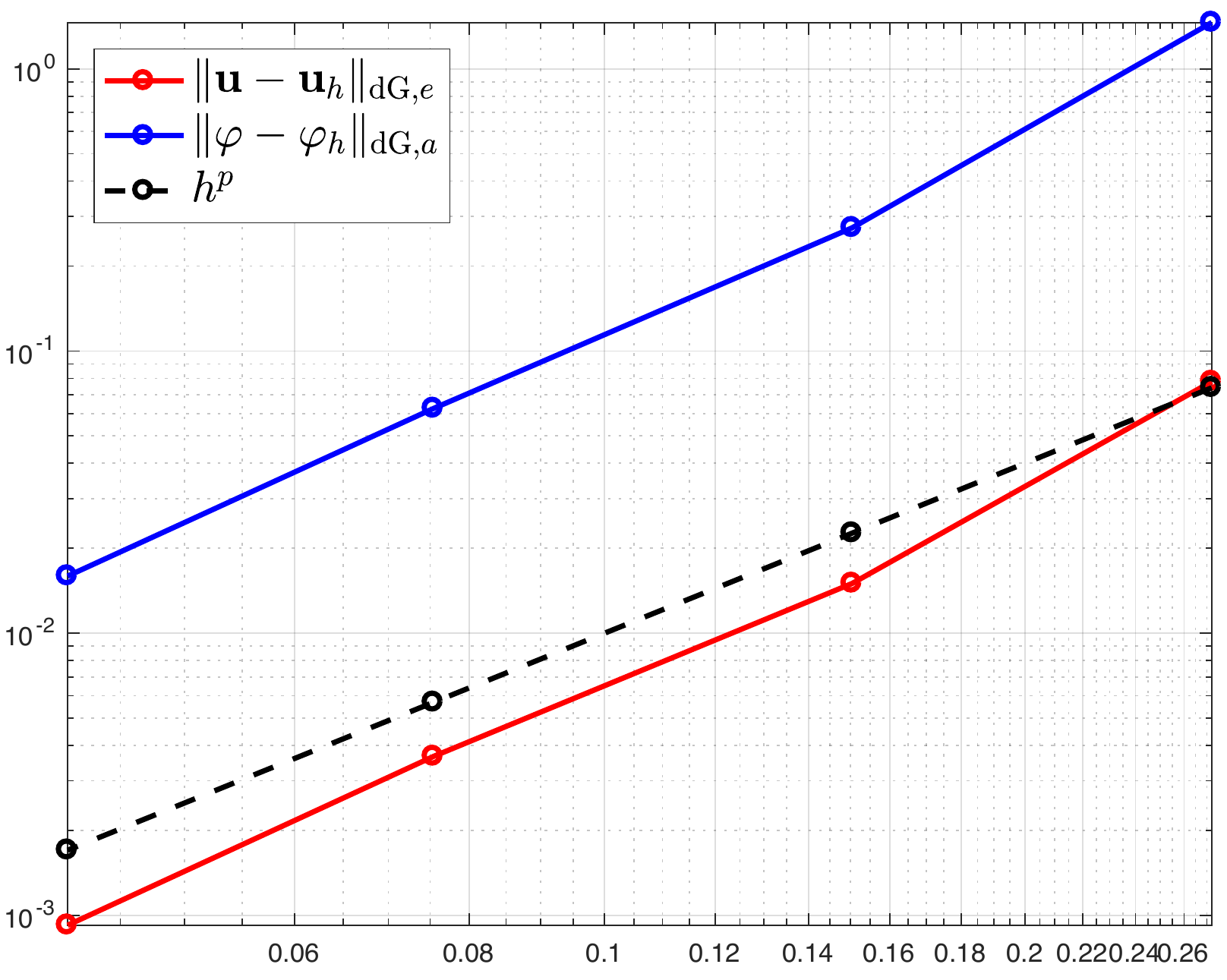}}
\hspace{.01cm}
\subfloat[$\|\bf u - \bf u_h\|_{\Oe}$ and $\|\phi - \phi_h\|_{\Oa}$ vs.~$h$ at $T=0.8$]{\includegraphics[scale=.45]{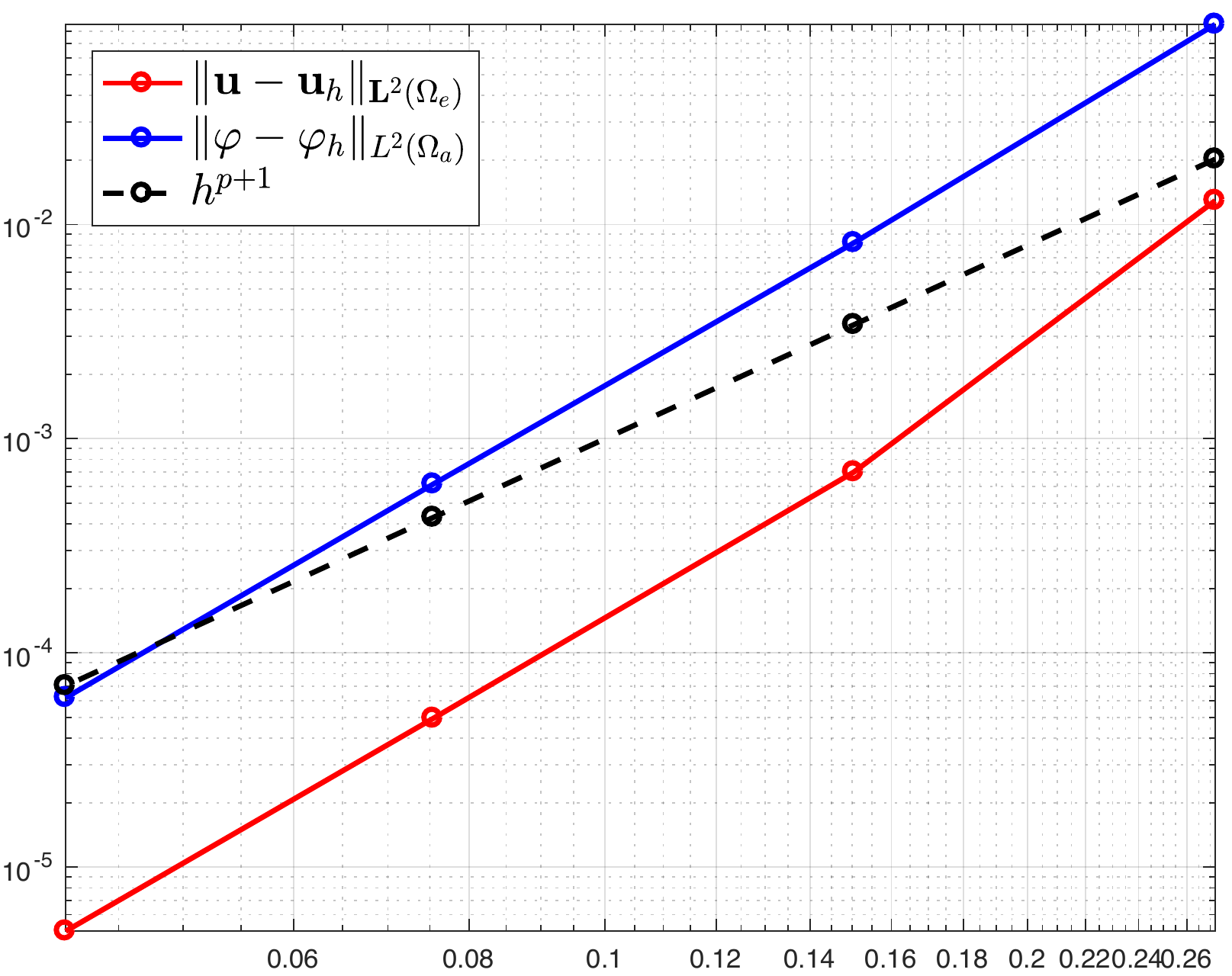}}
\caption{Test case 2. dG-error and $L^2$-error vs.~$h$ for four sequentially refined polygonal meshes and second-order polynomials.}
\label{errore_h_NR}
\end{figure}
\begin{figure}
\centering
\subfloat[$\|\bf u - \bf u_h\|_{\dGe}$ and $\|\phi - \phi_h\|_\dGa$ vs.~$p$ at $T=0.8$]{\includegraphics[scale=.45]{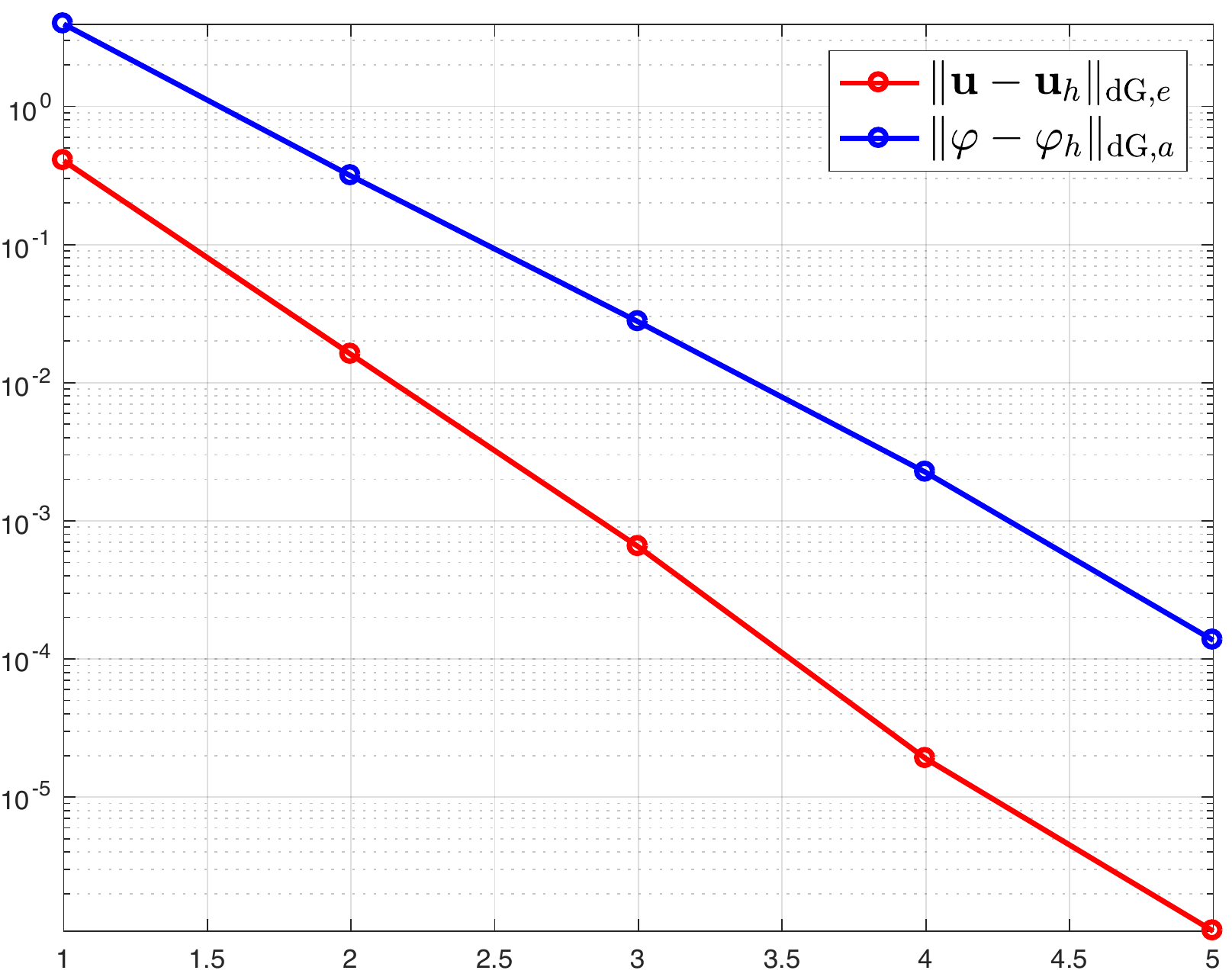}}
\hspace{.01cm}
\subfloat[$\|\bf u - \bf u_h\|_{\Oe}$ and $\|\phi - \phi_h\|_{\Oa}$ vs.~$p$ at $T=0.8$]{\includegraphics[scale=.45]{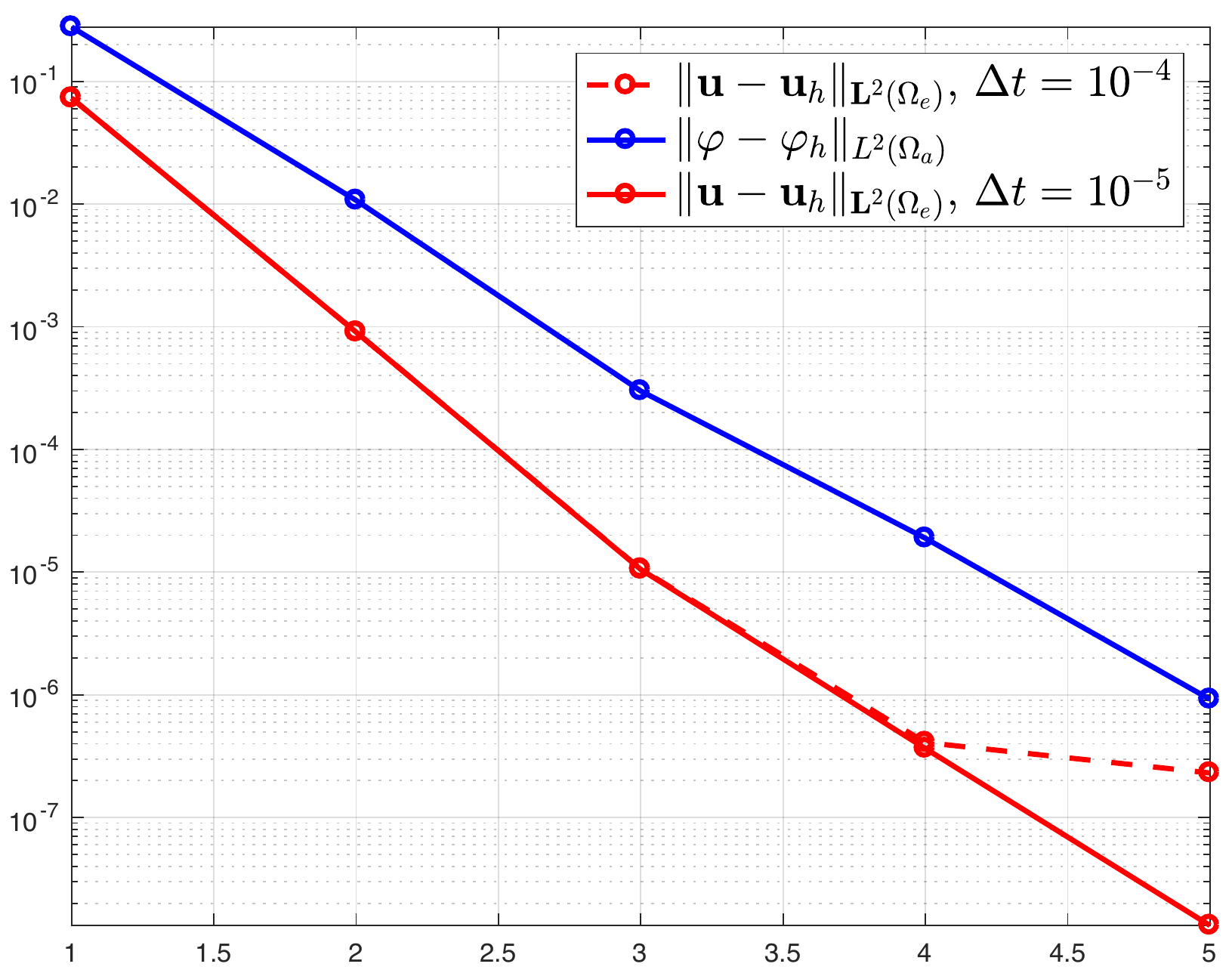}}
\caption{Test case 2. dG-error and $L^2$-error vs.~$p$ for $p$ ranging from 1 to 5 and a mesh given by 300 polygons.}
\label{errore_p_NR}
\end{figure}
\subsection{Test case 3: a physical example}
As a further numerical experiment, we simulate a seismic source. In particular, we suppose that the system is excited \emph{only} by a Ricker wavelet, i.e., by the following point source load placed in the acoustic domain:
\begin{equation}\label{dirac_source}
f_a(\bf x,t) = -2\pi a\left(1-2\pi a(t-t_0)^2\right)e^{-\pi a(t-t_0)^2} \delta(\bf x - \bf x_0),\ \ \bf x_0 \in \Oa,\ t_0 \in (0,T],
\end{equation}
where $\bf x \equiv (x,y)$, $\bf x_0 \equiv (x_0,y_0)$ is a given point in $\Oa$, and $\delta$ is the Dirac distribution (cf.~Figure \ref{f_a} for a representation of the time factor in \eqref{dirac_source}). All initial conditions, as well as the body force $\bf f_e$, are set to zero. 
The Dirac distribution in $\bf x_0$ is approximated numerically by a Gaussian distribution centered at $\bf x_0$. We consider the following values of the material parameters: $\rho_e = 2.5$, $\rho_a = 1$, $\mu = 10$, $\lambda = 20$, $c=1.5$; also, in \eqref{dirac_source}, we choose $\bf x_0 = (0.2, 0.5)$, $t_0 = 0.1$, and $a=576$. We employ here a polygonal mesh of 5000 elements, corresponding to a meshsize $h \simeq 0.04$, a uniform polynomial degree $p=3$, and a timestep $\Delta t = 10^{-5}$. The final time is set to $T=1$.

%Figure \ref{u_point} shows the elastic displacement at three points in the elastic domain $\Oe$, placed at increasing distances from the interface $\Gamma_\mr I$. It can be clearly observed that the vertical displacement is very close to zero in all of the three cases. This is a natural consequence of the coupling: it propagates only longitudinal
%stresses through the elasto-acoustic interface, since fluids cannot sustain shear stresses. Thus, since the monitored elastic points are horizontally aligned with the hypocentre, the only nonzero component of the displacement is the horizontal one. Also, as one expects, the farther the elastic point is from the interface, the more its horizontal motion is delayed in time. In the case of point $(-0.9,0.5)$, which is the farthest from $\Gamma_{\mr I}$ among the three, one can also observe very small oscillations around zero in the vertical displacement, when approaching time $t=0.9$. This is due to the fact that waves are completely reflected on the boundary, because of homogeneous Dirichlet boundary conditions. Such reflections are also well represented in Figure \ref{snapshots}, where the numerical solution is displayed at time $t=0.5$.
%
Figure \ref{snapshots} shows the numerical solution (horizontal and vertical elastic displacements, and acoustic potential) at time $t=0.5$. The vertical displacement, displayed in Figure \ref{snapshots}b, turns out to be very close to zero in a large elastic subregion, except near the boundary, where small reflected wavefronts can be detected, because of homogeneous Dirichlet boundary conditions. This behavior is due to the fact that the seismic source is placed close enough to the interface $\Gamma_\mr I$, so that the effects of reflected waves in the elastic region are not observed for a certain time, and hence only the coupling effects are visible (only longitudinal stresses are propagated through the elasto-acoustic interface, since fluids cannot sustain shear stresses). Nevertheless, after a certain time, elastic waves are reflected, which gives rise to a nonzero vertical displacement. Concerning the acoustic region, spherical wavefronts generated by the point source load can be clearly observed in Figure \ref{snapshots}c; again, waves are reflected on the boundary for the same reason as before. 
%%%%%%%%%
\begin{figure}
\includegraphics[scale=.2]{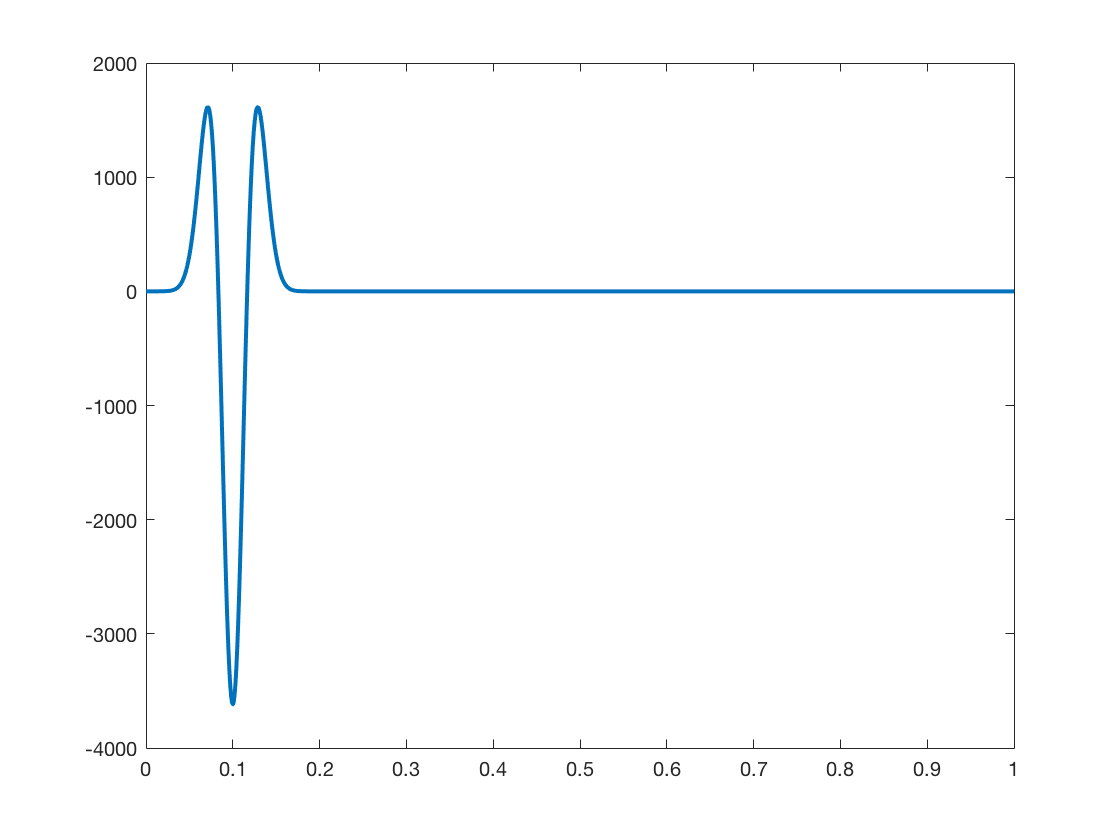}
\caption{$t \mapsto -2\pi a\left(1-2\pi a(t-t_0)^2\right)e^{-\pi a(t-t_0)^2}$ for $a=576$ and $t_0=0.1$.}
\label{f_a}
\end{figure}
%%%%%%%%%%%%%%%%%%%%%
%\begin{figure}
%\centering
%\subfloat[$t \mapsto \bf u_h(-0.01,0.5;t)$]{\includegraphics[scale=.225]{u_at_00105}}
%\hspace{.01cm}
%\subfloat[$t \mapsto \bf u_h(-0.3,0.5;t)$]{\includegraphics[scale=.225]{u_at_0305}}
%\hspace{.01cm}
%\subfloat[$t \mapsto \bf u_h(-0.9,0.5;t)$]{\includegraphics[scale=.225]{u_at_0905}}
%\caption{Elastic displacement at given points in $\Oe$.}
%\label{u_point}
%\end{figure}
%%%%%%%%%%%
\begin{figure}
\centering
\subfloat[Horizontal elastic displacement at $t=0.5$]{\includegraphics[scale=.4]{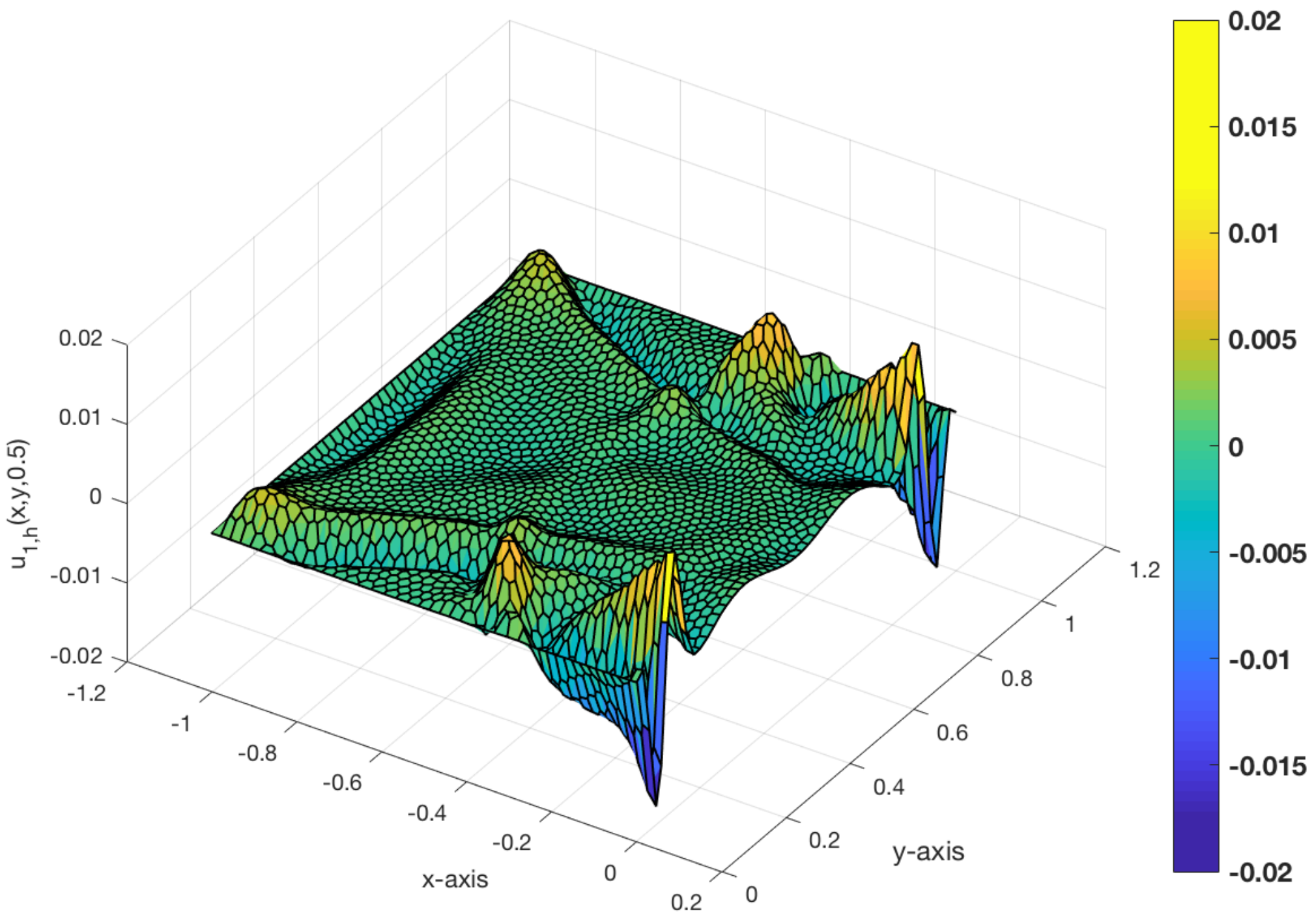}}
\hspace{.02cm}
\subfloat[Vertical elastic displacement at $t=0.5$]{\includegraphics[scale=.4]{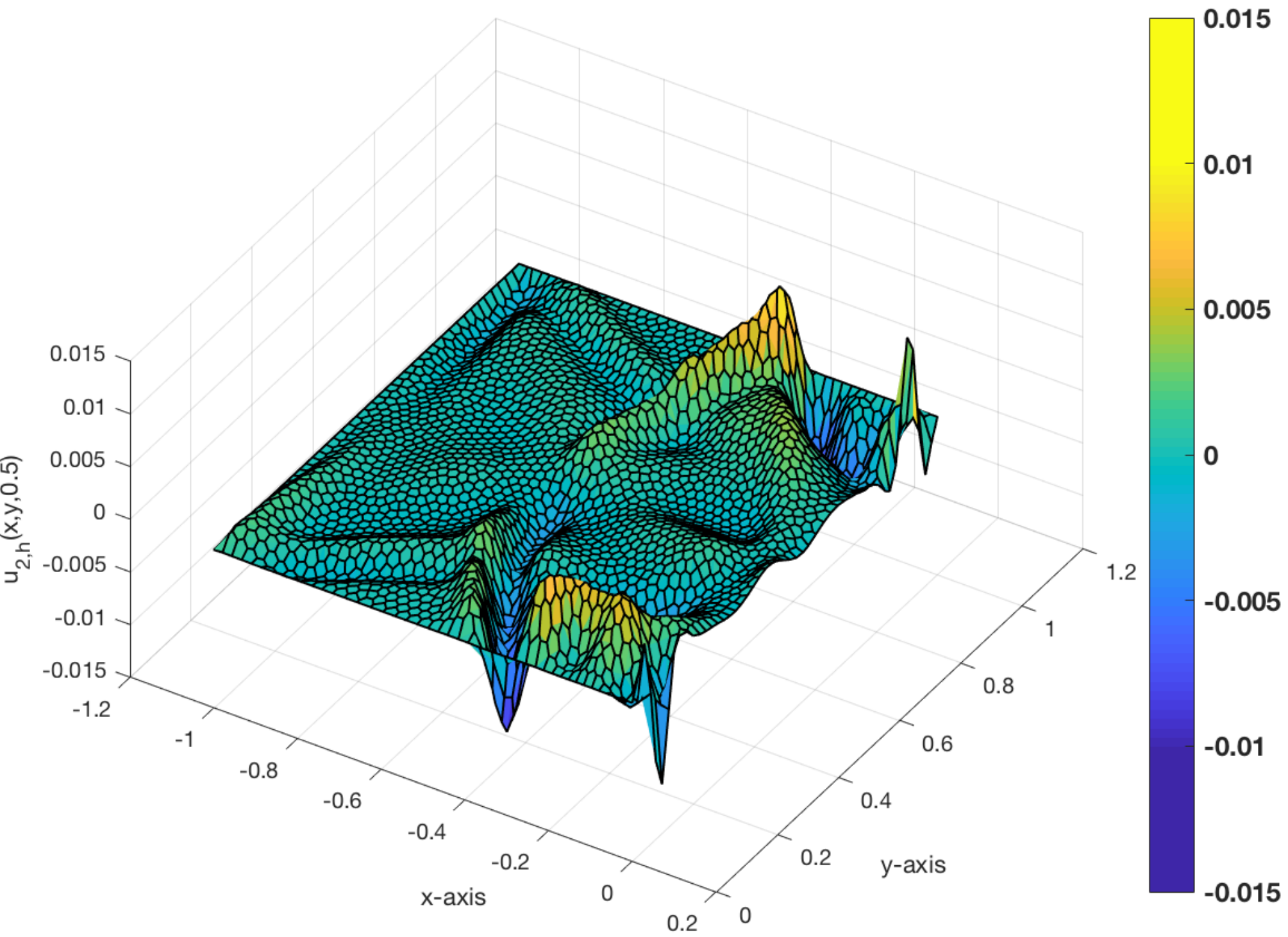}}
\hspace{.02cm} % \\ \vspace{1cm}
\subfloat[Acoustic potential at $t=0.5$]{\includegraphics[scale=.4]{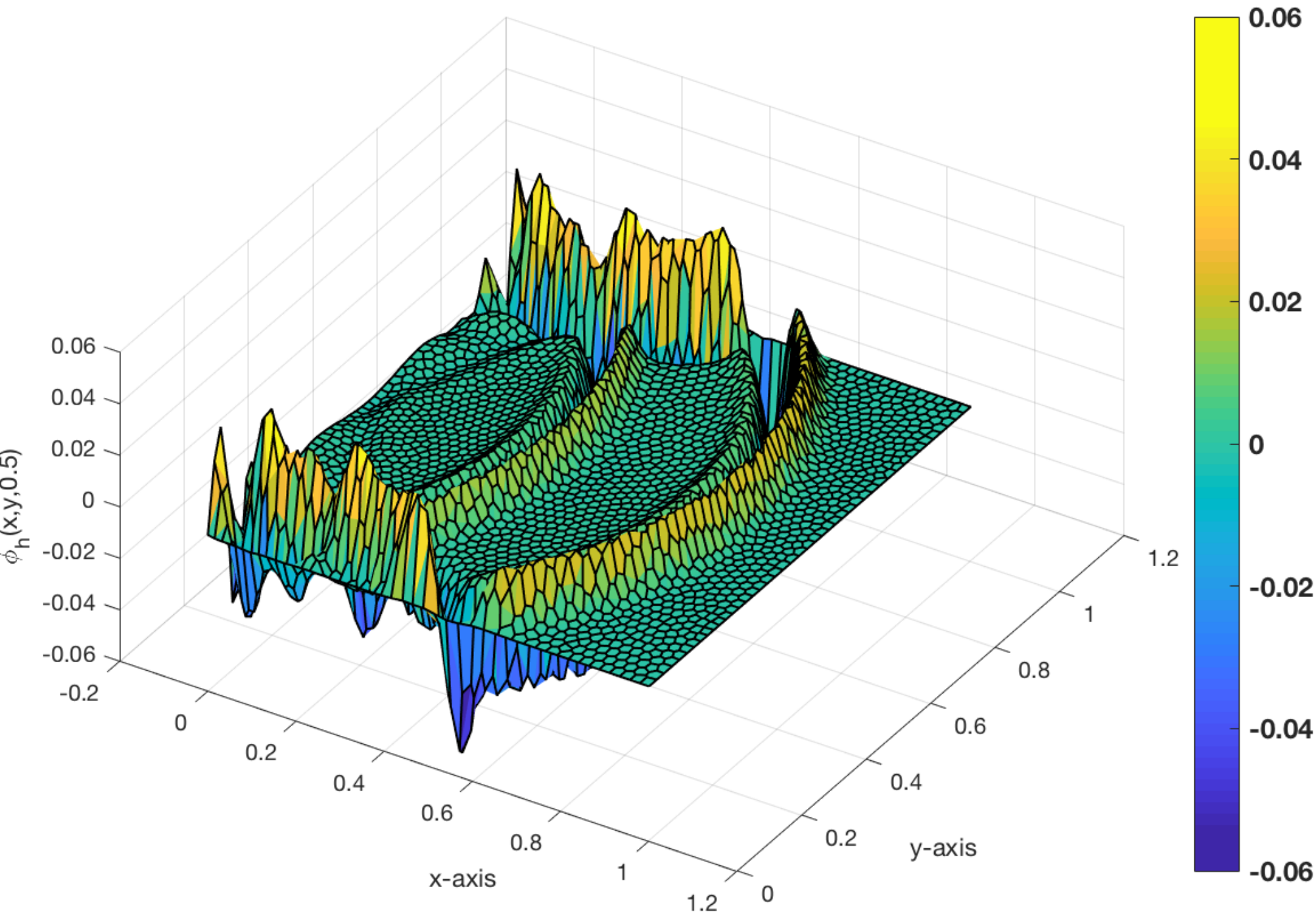}}
\caption{Numerical solution at $t=0.5$.}
\label{snapshots}
\end{figure}

%% ---------------------------------------------------
\appendix
\section{}\label{app}
     \renewcommand{\thelemma}{\Alph{section}\arabic{lemma}}
\begin{lemma}\label{lemma_stab}
The following inequalities hold:
\begin{subequations}
\begin{alignat}{2}
\|\eta^{-\nf{1}{2}}\av{\bm\sigma_h(\bf v)}\|_{{\cc F}_h^e} & \lesssim \frac{1}{\sqrt{\alpha}} \|\bb C^{\nf{1}{2}}\bm\eps_h(\bf v)\|_\Oe &\quad& \forall \bf v \in \bf V_{\! h}^e,\label{prima}\\
\|\chi^{-\nf{1}{2}}\av{\rho_a\grad_h\psi}\|_{{\cc F}_h^a} & \lesssim \frac{1}{\sqrt{\beta}} \|\rho_a^{\nf{1}{2}}\grad_h\psi\|_\Oa &\quad& \forall \psi \in V_h^a,\label{seconda}
\end{alignat}
\end{subequations}
where $\alpha$ and $\beta$ are the stability parameters appearing in the definition of stabilization functions \eqref{stabiliz.a}--\eqref{stabiliz.b}.
\end{lemma}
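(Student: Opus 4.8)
\noindent
The estimates \eqref{prima} and \eqref{seconda} are of the same type; I would prove \eqref{prima} in detail and deduce \eqref{seconda} by the obvious substitutions. The plan is the classical scaling argument: unfold the face norm, dominate the average by its two one-sided traces, extract the factor $\alpha^{-1}$ encoded in the penalty $\eta$, pass from $\bm\sigma_h(\bf v)$ on faces to $\bm\sigma_h(\bf v)$ in elements via the trace--inverse inequality \eqref{eq:trace.inv.ineq}, and finally trade $\bm\sigma_h(\bf v)$ for $\bb C^{\nf{1}{2}}\bm\eps_h(\bf v)$. Concretely, by definition of $\|\cdot\|_{{\cc F}_h^e}$ and since $\eta$ is face-wise constant, $\|\eta^{-\nf{1}{2}}\av{\bm\sigma_h(\bf v)}\|_{{\cc F}_h^e}^2=\sum_{F\in{\cc F}_h^e}\eta_{|F}^{-1}\,\|\av{\bm\sigma_h(\bf v)}\|_{L^2(F)}^2$; on an interior face $F\subseteq\de\k^+\cap\de\k^-$ the pointwise inequality $|\av{\bm\sigma_h(\bf v)}|^2\le\frac12\big(|\bm\sigma_h(\bf v)^+|^2+|\bm\sigma_h(\bf v)^-|^2\big)$ reduces matters to one-sided traces, while on a boundary face the average is already a single trace.

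\noindent
Next I would use the explicit form of $\eta$. On a shared face $\eta_{|F}=\alpha\max_{\k\in\{\k^+,\k^-\}}\big(\wl{\bb C}_\k p_{e,\k}^2/h_\k\big)$ by \eqref{stabiliz.a}, hence $\eta_{|F}^{-1}\le\alpha^{-1}\,h_\k/(\wl{\bb C}_\k p_{e,\k}^2)$ for \emph{each} of the two neighbours separately, so the trace taken from $\k^\pm$ can be paired with the weight of $\k^\pm$ itself --- in particular Assumption \ref{local_bv} is not needed at this point. Regrouping the face sum element by element (the boundary faces of $\de\k$ being disposed of by the same manipulation using the boundary branch of \eqref{stabiliz.a}) gives
\[
\|\eta^{-\nf{1}{2}}\av{\bm\sigma_h(\bf v)}\|_{{\cc F}_h^e}^2 \lesssim \alpha^{-1}\sum_{\k\in\cc T_h^e}\frac{h_\k}{\wl{\bb C}_\k p_{e,\k}^2}\sum_{F\subset\de\k}\|\bm\sigma_h(\bf v)\|_{L^2(F)}^2 \le \alpha^{-1}\sum_{\k\in\cc T_h^e}\frac{h_\k}{\wl{\bb C}_\k p_{e,\k}^2}\,\|\bm\sigma_h(\bf v)\|_{L^2(\de\k)}^2.
\]

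\noindent
To finish, on each $\k$ the tensor $\bm\sigma_h(\bf v)=\bb C\bm\eps_h(\bf v)$ has entries in $\mathscr P_{p_{e,\k}-1}(\k)\subset\mathscr P_{p_{e,\k}}(\k)$, because $\bb C$ is element-wise constant; applying \eqref{eq:trace.inv.ineq} entry by entry and summing over the $d\times d$ components --- so that the \emph{entire} boundary $\de\k$ occurs, which is exactly what prevents any dependence on the number of faces of $\k$ --- yields $\|\bm\sigma_h(\bf v)\|_{L^2(\de\k)}^2\lesssim p_{e,\k}^2 h_\k^{-1}\,\|\bm\sigma_h(\bf v)\|_{L^2(\k)}^2$. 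Moreover, pointwise $|\bb C\bm\eps_h(\bf v)|\le|\bb C^{\nf{1}{2}}|_2\,|\bb C^{\nf{1}{2}}\bm\eps_h(\bf v)|$, so by $\wl{\bb C}_\k=(|\bb C^{\nf{1}{2}}|_2^2)_{|\k}$ (cf.~\eqref{C_k}) one has $\|\bm\sigma_h(\bf v)\|_{L^2(\k)}^2\le\wl{\bb C}_\k\,\|\bb C^{\nf{1}{2}}\bm\eps_h(\bf v)\|_{L^2(\k)}^2$. Substituting, the factor $\wl{\bb C}_\k p_{e,\k}^2 h_\k^{-1}$ cancels the weight $h_\k(\wl{\bb C}_\k p_{e,\k}^2)^{-1}$ and leaves $\|\eta^{-\nf{1}{2}}\av{\bm\sigma_h(\bf v)}\|_{{\cc F}_h^e}^2\lesssim\alpha^{-1}\sum_{\k\in\cc T_h^e}\|\bb C^{\nf{1}{2}}\bm\eps_h(\bf v)\|_{L^2(\k)}^2=\alpha^{-1}\|\bb C^{\nf{1}{2}}\bm\eps_h(\bf v)\|_\Oe^2$; taking square roots gives \eqref{prima}. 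For \eqref{seconda} I would run the same four steps with $\bm\sigma_h(\bf v)$, $\bb C^{\nf{1}{2}}\bm\eps_h(\bf v)$, $\wl{\bb C}_\k$, $p_{e,\k}$ and $\alpha$ replaced respectively by $\rho_a\grad_h\psi$, $\rho_a^{\nf{1}{2}}\grad_h\psi$, $\wl\rho_{a,\k}$, $p_{a,\k}$ and $\beta$, using that $\rho_a$ is element-wise constant, so $\grad_h\psi_{|\k}\in[\mathscr P_{p_{a,\k}-1}(\k)]^d$, and the identity $\|\rho_a\grad_h\psi\|_{L^2(\k)}^2=\wl\rho_{a,\k}\,\|\rho_a^{\nf{1}{2}}\grad_h\psi\|_{L^2(\k)}^2$.

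\noindent
The one point that actually requires care --- and the one I would watch most closely --- is the bookkeeping in the last two steps: a face-by-face application of a trace inequality would introduce a spurious multiplicative factor equal to the number of faces of each element. Using instead the element-boundary trace--inverse inequality \eqref{eq:trace.inv.ineq} (valid under Assumption \ref{uno}), together with the fact that the $\max$ in the definition of $\eta$ and $\chi$ lets every one-sided trace be matched with the weight of the element it is taken from, keeps all hidden constants independent of the discretization parameters, the number of faces per element, and the material properties, as the statement requires.
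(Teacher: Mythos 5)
Your argument is correct and follows essentially the same route as the paper: dominate the average by one-sided traces, pair each trace with its own element's weight via the $\max$ in \eqref{stabiliz.a}, pass to the element by a trace--inverse inequality, and absorb $\bb C$ through $\wl{\bb C}_\k=(|\bb C^{\nf12}|_2^2)_{|\k}$. The only (cosmetic) difference is that the paper applies the per-face simplex inequality \eqref{inv_for_simplex} directly on each $F$ with the simplices $\k_\flat^F$ of Assumption \ref{uno} --- whose disjointness, via $\sum_{F\subset\de\k}\|\cdot\|_{\k_\flat^F}^2\le\|\cdot\|_\k^2$, already prevents the ``number of faces'' factor you worry about --- whereas you first assemble $\|\cdot\|_{L^2(\de\k)}$ and invoke the aggregated inequality \eqref{eq:trace.inv.ineq}, which is itself derived from that same construction.
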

\begin{proof} We only prove \eqref{prima}, the arguments for showing \eqref{seconda} being completely analogous.

Recall that the following trace-inverse inequality holds for simplices \cite[p.~25]{cangiani-book}: given a simplex $T\subset \bb R^d$ and a polynomial degree $p \ge 1$, for all $v\in \mathscr P_p(T)$ there is a real number $C > 0$ independent of the discretization parameters such that
\begin{equation}\label{inv_for_simplex}
\|v\|_F^2 \le C p^2\frac{ |F|}{|T|}\|v\|_T^2.
\end{equation}

Owing to \eqref{inv_for_simplex}, the definition \eqref{C_k} of $\wl{\bb C}_\k$, the definition \eqref{stabiliz.a} of $\eta$, and Assumption \ref{uno}, for any $\bf v \in \bf V_{\! h}^e$ we obtain
$$
\begin{aligned}
\|\eta^{-\nf{1}{2}}\av{\bm\sigma_h(\bf v)}\|_{{\cc F}_h^e}^2
 & \le \sum_{\k \in \cc T_h^e} \sum_{F\subset \de\k} \wl{\bb C}_\k \| \eta^{-\nf{1}{2}}
 \bb C^{\nf12} \bm\eps (\bf v)\|_{F}^2 \\
 &  \lesssim \sum_{\k \in \cc T_h^e} \sum_{F\subset \de\k} \eta^{-1}\wl{\bb C}_\k p_{e,\k}^2
 \frac{ |F|}{ |\k_\flat^F|} \|\bb C^{\nf12} \bm\eps(\bf v)\|_{\k_\flat^F}^2 \lesssim \frac{1}{\alpha}
 \|\bb C^{\nf12}\bm\eps_h(\bf v)\|_{\Oe}^2.
\end{aligned}
$$
\end{proof}
\begin{lemma}\label{bounds}
For any $\W = (\bf v,\psi) \in C^1([0,T];\bf V_{\! h}^e) \times C^1([0,T];V_h^a)$, it holds
\begin{equation}\label{eq:bounds}
\begin{aligned}
\|\W\|_\E^2 - 2\left( 
\la \av{\bm\sigma_h(\bf v)},\jm{\bf v}\ra_{{\cc F}_h^e} + \la\av{\rho_a\grad_h\psi},\jm{\psi}\ra_{{\cc F}_h^a}
\right) &\lesssim \|\W\|_\E^2,
\\
\|\W\|_\E^2 - 2\left( 
\la \av{\bm\sigma_h(\bf v)},\jm{\bf v}\ra_{{\cc F}_h^e} + \la\av{\rho_a\grad_h\psi},\jm{\psi}\ra_{{\cc F}_h^a}
\right) &\gtrsim \|\W\|_\E^2,
\end{aligned}
\end{equation}
\end{lemma}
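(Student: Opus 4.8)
The plan is to bound the two cross terms appearing in \eqref{eq:bounds} by a multiple of $\|\W\|_\E^2$ whose prefactor can be made arbitrarily small by enlarging the penalty parameters $\alpha$ and $\beta$; once this is done, both inequalities in \eqref{eq:bounds} follow at once.

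First I would treat the elastic contribution. Applying the Cauchy--Schwarz inequality face by face, inserting the weight $\eta$, and then invoking estimate \eqref{prima} of Lemma \ref{lemma_stab}, I obtain
$$
\big| \la \av{\bm\sigma_h(\bf v)},\jm{\bf v}\ra_{\cc F_h^e} \big|
\le \|\eta^{-\nf{1}{2}}\av{\bm\sigma_h(\bf v)}\|_{\cc F_h^e}\,\|\eta^{\nf{1}{2}}\jm{\bf v}\|_{\cc F_h^e}
\lesssim \frac{1}{\sqrt{\alpha}}\,\|\bb C^{\nf{1}{2}}\bm\eps_h(\bf v)\|_{\Oe}\,\|\eta^{\nf{1}{2}}\jm{\bf v}\|_{\cc F_h^e},
$$
and Young's inequality together with the definition \eqref{norme} of $\|\cdot\|_{\dGe}$ then gives $|\la \av{\bm\sigma_h(\bf v)},\jm{\bf v}\ra_{\cc F_h^e}| \lesssim \alpha^{-\nf{1}{2}}\|\bf v\|_{\dGe}^2$. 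The acoustic cross term is handled identically, using \eqref{seconda} in place of \eqref{prima}, which yields $|\la\av{\rho_a\gradh\psi},\jm{\psi}\ra_{\cc F_h^a}| \lesssim \beta^{-\nf{1}{2}}\|\psi\|_{\dGa}^2$.

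Next, since $\|\bf v\|_{\dGe}^2 \le \|\bf v\|_{\E_e}^2 \le \|\W\|_\E^2$ and $\|\psi\|_{\dGa}^2 \le \|\psi\|_{\E_a}^2 \le \|\W\|_\E^2$ by \eqref{energy_norm}--\eqref{energies}, combining the two previous bounds produces a constant $C>0$, independent of the discretization parameters (and of the number of faces per element), such that
$$
2\big| \la \av{\bm\sigma_h(\bf v)},\jm{\bf v}\ra_{\cc F_h^e} + \la\av{\rho_a\gradh\psi},\jm{\psi}\ra_{\cc F_h^a} \big|
\le C\Big(\frac{1}{\sqrt{\alpha}}+\frac{1}{\sqrt{\beta}}\Big)\|\W\|_\E^2.
$$
The upper estimate in \eqref{eq:bounds} is then immediate, and for the lower estimate it suffices to require $\alpha$ and $\beta$ large enough that $C(\alpha^{-\nf{1}{2}}+\beta^{-\nf{1}{2}})<1$ — precisely the ``sufficiently large penalty parameters'' condition used throughout the paper.

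The argument is essentially routine bookkeeping; the only subtle point is that the independence of $C$ from the number of faces of the mesh elements rests on Lemma \ref{lemma_stab}, hence ultimately on Assumption \ref{uno}. I do not anticipate any genuine obstacle beyond tracking constants.
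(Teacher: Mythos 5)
Your proof is correct and follows essentially the same route as the paper's: Cauchy--Schwarz with the $\eta^{\pm\nicefrac12}$, $\chi^{\pm\nicefrac12}$ weights, Lemma~\ref{lemma_stab}, and Young's inequality, with the penalty parameters taken large enough for the lower bound. The only (harmless) cosmetic difference is that you absorb both cross terms directly into a small multiple of $\|\W\|_\E^2$, whereas the paper first establishes the coercivity at the level of the $\dG$-norm via the intermediate inequality \eqref{bound_dG} with separate Young parameters $\delta,\epsi$.
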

\begin{proof}
The first bound follows from the Cauchy--Schwarz inequality, the definition \eqref{energy_norm} of the energy norm, and Lemma \ref{lemma_stab}:
\begin{align*}
\|\W\|_\E^2 \ - & \ 2\left( 
\la \av{\bm\sigma_h(\bf v)},\jm{\bf v}\ra_{{\cc F}_h^e} + \la\av{\rho_a \grad_h\psi},\jm{\psi}\ra_{{\cc F}_h^a}
\right) 
\\
& \lesssim \  \|\W\|_\E^2 + \|\eta^{-\nf12}\av{\bm\sigma_h(\bf v)}\|_{{\cc F}_h^e} \|\eta^{\nf12}\jm{\bf v}\|_{{\cc F}_h^e}
 + \|\chi^{-\nf12}\av{\rho_a\grad_h\psi}\|_{{\cc F}_h^a} \|\chi^{\nf12}\jm{\psi}\|_{{\cc F}_h^a} \\
& \lesssim  \|\W\|_\E^2 + \frac{1}{\sqrt{\alpha}} \|\bb C^{\nf12}\bm\eps_h(\bf v)\|_\Oe \|\bf v\|_\dGe + \frac{1}{\sqrt{\beta}}
\|\rho_a^{\nf12}\grad_h\psi\|_\Oa \|\psi\|_\dGa \\
& \lesssim \|\W\|_\E^2 + \|\W\|_\dG^2 \lesssim \|\W\|_\E^2,
\end{align*}
where we have set $\| \W \|_\dG^2 = \|\bf v\|_{\dG,e}^2 + \|\psi\|_{\dG,a}^2$.
To prove the second bound, it suffices to show that
\begin{equation}\label{bound_dG}
\|\W\|_\dG^2 - 2\left( 
\la \av{\bm\sigma_h(\bf v)},\jm{\bf v}\ra_{{\cc F}_h^e} + \la\av{\rho_a\grad_h\psi},\jm{\psi}\ra_{{\cc F}_h^a}
\right) \gtrsim \|\W\|_\dG^2.
\end{equation}
Indeed, by the definition \eqref{energy_norm} of the energy norm and \eqref{bound_dG},
%the left-hand side of \eqref{eq:bounds} is equal to
\begin{multline*}
\|\W\|_\E^2 - 2\left( 
\la \av{\bm\sigma_h(\bf v)},\jm{\bf v}\ra_{{\cc F}_h^e} + \la\av{\rho_a\grad_h\psi},\jm{\psi}\ra_{{\cc F}_h^a}
\right)   \\
= \|\rho_e^{\nf12}\dot{\bf v}\|_\Oe^2 + \|\rho_e^{\nf12}\z \bf v\|_\Oe^2 + \|c^{-1}\rho_a^{\nf12}\dot\psi\|_\Oa^2 + \|\W\|_\dG^2 
- 2\left( 
\la \av{\bm\sigma_h(\bf v)},\jm{\bf v}\ra_{{\cc F}_h^e} + \la\av{\rho_a\grad_h\psi},\jm{\psi}\ra_{{\cc F}_h^a}
\right)
\\
\gtrsim \|\rho_e^{\nf12}\dot{\bf v}\|_\Oe^2 + \|\rho_e^{\nf12}\z \bf v\|_\Oe^2 + \|c^{-1}\rho_a^{\nf12}\dot\psi\|_\Oa^2 + \|\W\|_\dG^2 = \|\W\|_\E^2.
\end{multline*}
Thus, we next show that \eqref{bound_dG} holds provided the stability parameters $\alpha$ and $\beta$ are chosen large enough.
To this purpose, using Young's inequality we infer that, for any $\delta,\epsi > 0$,
\begin{align*}
%\begin{aligned}
\la \av{\bm\sigma_h(\bf v)},\jm{\bf v}\ra_{{\cc F}_h^e} &\le \sum_{F\in{\cc F}_h^e} \|\eta^{-\nf12}\av{\bm\sigma_h(\bf v)}\|_F \|\eta^{\nf12}\jm{\bf v}\|_F  \le\frac{1}{2\delta}\|\eta^{-\nf12}\av{\bm\sigma_h(\bf v)}\|_{{\cc F}_h^e}^2 + \frac{\delta}{2}\|\eta^{\nf12}\jm{\bf v}\|_{{\cc F}_h^e}^2, \\
%\end{aligned}
%
%\begin{aligned}
\la \av{\rho_a \grad_h \psi},\jm{\psi}\ra_{{\cc F}_h^a} & \le \sum_{F\in{\cc F}_h^a}
\|\chi^{-\nf12}\av{\rho_a\grad_h\psi}\|_F \|\chi^{\nf12}\jm{\psi}\|_F
 \le\frac{1}{2\epsi}\|\chi^{-\nf12}\av{\rho_a \grad_h \psi}\|_{{\cc F}_h^a}^2 + \frac{\epsi}{2}\|\chi^{\nf12}\jm{\psi}\|_{{\cc F}_h^a}^2.
%\end{aligned}
\end{align*}
Hence, from the definition of the $\|{\cdot}\|_{\dG,e}$- and $\|{\cdot}\|_{\dG,a}$-norms on $\bf V_{\! h}^e$ and $V_h^a$, it follows that
\begin{multline*}
\|\W\|_\dG^2 - 2\left( 
\la \av{\bm\sigma_h(\bf v)},\jm{\bf v}\ra_{{\cc F}_h^e} + \la\av{\rho_a\grad_h\psi},\jm{\psi}\ra_{{\cc F}_h^a}
\right)
\\
 \ge \|\bb C^{\nf12}\bm\eps(\bf v)\|_\Oe^2 + \|\rho_a^{\nf12}\grad_h\psi\|_\Oa^2 + \left(1-{\delta}\right)
\|\eta^{\nf12}\jm{\bf v}\|_{{\cc F}_h^e}^2 - \frac{1}{\delta} \|\eta^{-\nf12}\av{\bm\sigma_h(\bf v)}\|_{{\cc F}_h^e}^2
\\
+ \left(1-{\epsi}\right)\|\chi^{\nf12}\jm{\psi}\|_{{\cc F}_h^a}^2 - \frac{1}{\epsi}\|\chi^{-\nf12}\av{\rho_a \grad_h \psi}\|_{{\cc F}_h^a}^2
\\
\ge \left(1-\frac{C_1}{\alpha\delta}\right)\|\bb C^{\nf12}\bm\eps_h(\bf v)\|_\Oe^2 +
\left(1-\frac{C_2}{\beta\epsi}\right) \|\rho_a^{\nf12}\grad_h\psi\|_\Oa^2
\\
+ \left(1-{\delta}\right) \|\eta^{\nf12}\jm{\bf v}\|_{{\cc F}_h^e}^2 + \left(1-{\epsi}\right)\|\chi^{\nf12}\jm{\psi}\|_{{\cc F}_h^a}^2,
\end{multline*}
where in the last bound we have applied Lemma \ref{lemma_stab} with hidden constants $C_1$ and $C_2$.
Then \eqref{bound_dG} follows by choosing, for instance, $\delta=\epsi=\nf12$ and $\alpha \ge 4C_1$, $\beta \ge 4C_2$.
\end{proof}

 \bibliographystyle{elsarticle-num} 
 \bibliography{references}
\end{document}